\renewcommand{\emph}[1]{\textit{#1}}
\newcommand{\old}[1]{}
\newcounter{rot}
\newcommand{\ignore}[1]{}
\newcommand{\set}[1]{\left\{#1\right\}}
\newcommand\bfrac[2]{\left(\frac{#1}{#2}\right)}
\newtheorem{theorem}{Theorem}[section]
\newtheorem{lemma}[theorem]{Lemma}
\newtheorem{claim}[theorem]{Claim}
\newtheorem{question}[theorem]{Question}
\def\cD{{\mathcal D}}
\def\cE{{\mathcal E}}
\def\cF{\mathcal{F}}
\def\cK{{\mathcal K}}
\def\cH{{\mathcal H}}
\def\cM{\mathcal{M}}
\def\cP{\mathcal{P}}
\def\cS{{\mathcal S}}
\def\Pr{\mathbb{P}}
\def\bd{{\bf d}}
\def\bx{{\bf x}}
\def\g{\gamma}
\def\e{\epsilon}
\def\la{\lambda}
\def\bz{\overline{\zeta}}
\def\bm{\overline{m}}
\def\bY{\overline{Y}}
\def\bwmax{\overline{wmax}}
\def\bW{\overline{W}}
\def\z{\zeta}
\def\gr{\textsc{2-Greedy }}
\def\grs{\textsc{2-Greedy. }}
\def\grc{\textsc{2-Greedy, }}
\title{An improved lower bound on the length of the longest cycle in random graphs}
\author{Michael Anastos\footnote{Institute of Science and Technology Austria, Klosterneuburg, Austria. Email:michael.anastos@ist.ac.at.  This project has received funding from the European Union’s Horizon 2020 research and innovation
programme under the Marie Sk\l{}odowska-Curie grant agreement No 101034413
\includegraphics[width=5mm, height=3.5mm]{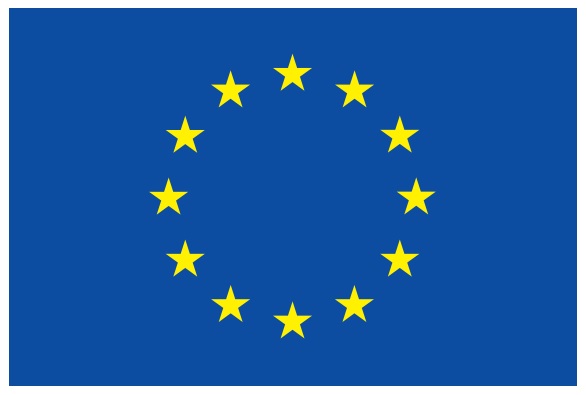}.}}
\begin{document}

\maketitle
\begin{abstract}
    We provide a new lower bound on the length of the longest cycle of the binomial random graph $G\sim G(n,(1+\e)/n)$ that holds w.h.p. for all $\e=\e(n)$ such that $\e^3n\to \infty$. In the case $\epsilon\leq \epsilon_0$ for some sufficiently small constant $\e_0$,  this bound is equal to $1.581\epsilon^2n$ which improves upon the current best lower bound of $4\epsilon^2n/3$ due to \L{}uczak.
\end{abstract}
\section{Introduction}
Let $G(n,p)$ denote the binomial random graph i.e. the random graph on $[n]$ where each edge appears independently with probability $p$.
For a graph $G$ denote by $L(G)$ the length of its longest cycle.
Erd\H{o}s \cite{erdos1975problems} conjectured that if $c>1$ then w.h.p.\footnote{We say that a sequence of events $\{\mathcal{E}_n\}_{n\geq 1}$ holds {\em{with high probability}} (w.h.p.\@ in short) if $\lim_{n \to \infty}\Pr(\mathcal{E}_n)=1-o(1)$.}  $L(G(n,c/n))\geq \ell(c)n$ where $\ell(c)>0$ is independent of $n$. This was proved by Ajtai, Koml\'os and Szemer\'edi \cite{ajtai1981longest} and in a slightly weaker form by  Fernandez  de la Vega \cite{de1979long} who proved that if $c>4\log 2$ then $L(G(n,c/n))=(1-O(c^{-1}))n$ w.h.p. Although this answers Erd\H{o}s's question and provides the order of magnitude of $L(G(n,c/n))$ for $c>1$ it leaves open the question of providing matching upper and lower bounds on $L(G(n,c/n))$. Since then, there has been an extended line of research trying to find such bounds. The corresponding results can be separated into 3 groups, primarily based on the method used. 

The first group of results builds upon an idea of Bollob\'as \cite{bollobas1982long}  who realised that for large $c$ one could find a large path/cycle w.h.p. by concentrating on a large subgraph with large minimum degree and demonstrating Hamiltonicity.
In this way Bollob\'as \cite{bollobas1982long},  Bollob\'as, Fenner and Frieze \cite{bollobas1984long} and Frieze \cite{frieze1986large} provided lower bounds on $L(G(n,c/n))$. Recently the author and Frieze were able to determine $L(G(n,c/n))/n$ for sufficiently large $c$ by examining a largest subgraph of $G(n,c/n)$ that can be Hamiltonian. Their result was then extended to  $c\geq 20$ by the author \cite{anastos2022note}. 

The second group of results analyses the Depth First Search (DFS) algorithm for identifying the connected components of a graph and argues that while doing so a long path is identified. In this group we find the result of Koml\'os and Szemer\'edi \cite{ajtai1981longest}.
Enriquez, Faraud and M{\'e}nard determined the limiting shape of a the DFS-tree and consequentially the size of the longest root to leaf path that it identifies for $c>1$ \cite{enriquez2020limiting}. This path, in the regime $c=1+\epsilon$ where $\e$ is a sufficiently small constant has size $(1+o(1))\e^2n$ w.h.p. This result was extended to $\e=\omega(n^{-\frac{1}{3}+o(1)})$ by Diskin and Krivelevich  \cite{diskin2021performance}. Earlier Krivelevich and Sudakov showed that the DFS algorithm finds a path of length $\Theta(\epsilon^2 n)$ for $\epsilon=\omega( n^{-1/3}\log^{1/3}n)$ w.h.p. \cite{krivelevich2013phase}. 

In the final group we find \L{}uczak's result \cite{luczak1991cycles}. He
studied $L(G(n,c/n))$ in the regime $c=1+\epsilon$ where $\e=\epsilon(n)=o(1)$ is such that $\e^3n\to \infty$. By studying the kernel (defined shortly) of $G(n,(1+\e)/n)$, he proved that  $(1+o(1))4\epsilon^2 n/3\leq L(G(n,c/n))\leq (1+o(1))2\epsilon^2n$  w.h.p. The upper bound corresponds to the number of vertices of the $2$-core of $G(n,(1+\e)/n)$. Building upon his ideas, Kemkes and Wormald improved the upper bound to $1.7395 \epsilon^2 n$ \cite{kemkes2013improved}. Kim and Wormald announced the lower bound of $(1.5+o(1))\e^2n$ on $L(G(n,(1+\e)/n))$ which they derived by studying an adaptive version of the greedy Depth First Search algorithm applied to the kernel of $G(n,(1+\e)/n)$ equipped with the appropriate weights as described at the next section \cite{kemkes2013improved},\cite{kim2013}.
The kernel of a random graph $G$, denoted by $\cK(G)$, is the multigraph obtained by first generating the 2-core of $G$ (i.e. the maximal subgraph of $G$ of minimum degree 2), then contracting every maximal path whose internal vertices have degree 2 into a single edge and finally discarding any isolated vertices. By study the kernel of $G(n,p)$ we prove the following theorem.
\begin{theorem}\label{thm:main}
Let $G\sim G(n,(1+\epsilon)/n)$ where $\epsilon=\epsilon(n) \leq \epsilon_0$ for some sufficiently small $\epsilon_0>0$ and $\epsilon^3n\to \infty$. Then w.h.p. $L(G)\geq \alpha\cdot 4\e^2 n/3\geq 1.581\e^2 n$ where $\alpha>1.186$ is derived from a system of differential equations. 
\end{theorem}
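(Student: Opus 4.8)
The plan is to pass to the kernel $\cK=\cK(G)$ and recast the problem as a weighted longest-cycle problem there. First I would record the structural facts about $\cK$ in the regime $\e\le\e_0$, $\e^3n\to\infty$: conditioned on its vertex set, its (near-$3$-regular) degree sequence and its number of edges, $\cK$ is a uniformly random multigraph with that degree sequence; w.h.p.\ it has $(1+o(1))\tfrac43\e^3n$ vertices and $(1+o(1))2\e^3n$ edges; and each edge $e$ of $\cK$ carries a weight $w(e)$, namely the number of edges of the bare path of the $2$-core contracted to $e$, where the $w(e)$ equal, up to a negligible coupling error, i.i.d.\ geometric-type variables of mean $\Theta(1/\e)$ with sum $(1+o(1))2\e^2n$, the order of the $2$-core. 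A cycle of $\cK$ on edge set $F$ lifts to a cycle of $G$ of length $\sum_{e\in F}w(e)$, so it suffices to produce, w.h.p., a cycle of the weighted kernel of total weight at least $\alpha\cdot\tfrac43\e^2n$ for some fixed $\alpha>1.186$. \L{}uczak's $4\e^2n/3$ is exactly the benchmark delivered by a \emph{uniformly random} Hamilton cycle of the near-cubic $\cK$: it meets a $2/3$ fraction of the edges and hence, in expectation, a $2/3$ fraction of the total weight. The whole point is to do strictly better by steering the cycle toward the heavy edges.

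Next I would introduce and analyse the \gr procedure: an adaptive depth-first exploration of $\cK$ that reveals the configuration-model pairing one pair at a time, maintains a single active path, and at each branching vertex reveals its unused pairings and continues the path along the heaviest resulting edge; whenever the active path picks up a chord back to one of its earlier vertices, \gr closes the resulting cycle if and only if the weight it would carry exceeds a running threshold, and otherwise backtracks. Since the unrevealed pairs stay exchangeable, the exploration is Markovian once we track a short list of variables: the numbers of still-unexplored vertices of each degree, the number of edges on the active path, and a rescaled version of the weight accumulated along it (and along the eventual cycle). I would then apply Wormald's differential equation method --- verify the boundedness and Lipschitz hypotheses, compute the one-step expected increments from the i.i.d.\ weight law and the pairing dynamics, and conclude that w.h.p.\ the rescaled trajectory shadows the solution of the associated ODE system up to a natural stopping time. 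Reading off the weight coordinate of that solution and optimising over the threshold yields the output $\alpha\cdot\tfrac43\e^2n$ with a numerically computed $\alpha>1.186$, hence $L(G)\ge 1.581\,\e^2n$. To cover the whole range I would finally remove the conditioning and pass to a scaling limit: after rescaling lengths by a factor $\e$ and sizes by a factor $1/(\e^3n)$, the weighted near-cubic kernel converges to a single $\e$-independent limit object, so the same ODE system --- and the same $\alpha$ --- governs every admissible $\e$, the $o(1)$ structural errors being absorbed into the slack $\alpha-1.186>0$.

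The main obstacle is the differential-equation analysis of \gr, and inside it two issues dominate. First, engineering a genuinely low-dimensional and \emph{self-bounding} state: the weights are unbounded (geometric of mean $\Theta(1/\e)$) so one-step increments are not bounded, and one must either truncate the weights and control the truncation error or work with a weight coordinate that is automatically Lipschitz; moreover the cycle-closing rule couples the path endpoint to its whole history, so one has to argue that only the weight \emph{already tracked on the active path} enters the decision. Second, boundary control: one must show the exploration gathers the target weight before the supply of unexplored near-cubic vertices is exhausted --- that is, the process does not die early --- which amounts to controlling the ODE solution near the coordinate axes and choosing the threshold so that the greedy bias provably lifts the weight coordinate strictly above $\tfrac43$. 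Secondary but genuine points are the i.i.d.\ approximation for the edge weights (they are only conditionally independent given the $2$-core degree sequence), discarding the $o(1)$ loops and parallel edges of the configuration model without disturbing exchangeability, and replacing the near-cubic degree sequence by the exactly cubic one at an $o(1)$ cost.
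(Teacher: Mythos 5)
Your high-level strategy --- pass to the near-cubic weighted kernel, convert the problem into finding a $w$-heavy cycle there, couple the geometric edge-weights to a continuous law, and run the differential equation method on a greedy exploration --- matches the paper's strategy, and you have correctly identified why the bound can beat $4\e^2n/3$: a heavy cycle must be biased toward the heavy edges rather than being a uniformly random Hamilton cycle. However, the algorithm you propose is genuinely different from the paper's \gr{} and, as described, the proposal has a structural gap.

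The paper's \gr{} is not a depth-first exploration maintaining a single active path with a threshold-based cycle-closing rule. It is a Karp--Sipser-style procedure that grows a \emph{$2$-matching} $M$: at each step it prioritizes ``dangerous'' vertices (those with at most $2-k$ remaining edges while having $k\in\{0,1\}$ edges in $M$) and, only when no dangerous vertex exists, it greedily adds a $w$-heaviest remaining edge to $M$. The output is a collection of vertex-disjoint paths (and a few cycles) covering all but $O(n^{0.9})$ vertices. Because of this, the argument requires a further step that your proposal omits entirely: before running \gr{}, the paper \emph{peels off a random set $\bx_1$ of $O(n^{0.9})$ edges} and sets them aside; after \gr{} terminates, the resulting paths are cut into pieces of length $n^{0.095}$ and stitched into a single cycle using those reserved edges via an auxiliary random digraph and a known Hamiltonicity result (Lemma \ref{lem:last}). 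Parts (c) and (d) of Lemma \ref{lem:performance} exist precisely to guarantee that the path pieces contain enough $Y_2$-vertices so that this stitching is possible. Without this decomposition (peel, build $2$-matching, stitch), the quoted $\alpha$ has no derivation.

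Your proposed single-path DFS with a running threshold has a concrete obstacle that the paper's route sidesteps. The Markovian/exchangeability property that makes the DEM applicable in the paper is Lemma \ref{lem:uniformity}: the remaining graph $G_t$ is exactly distributed as $G_{n_t,m_t}(Y_1^t,Y_2^t,Y^t,Z_1^t,Z^t)$ because \gr{} only ever removes vertices/edges in a degree-controlled way. A DFS that inspects chords back to its own history, compares their weights to a threshold, and backtracks otherwise, leaks information about the weights of \emph{unremoved} edges into the conditioning, so the ``unrevealed pairs stay exchangeable'' claim is not automatic and would need a separate argument. You flag ``the cycle-closing rule couples the path endpoint to its whole history'' as a difficulty, but you do not resolve it, and resolving it is exactly where a single-path approach gets stuck. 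The paper avoids this entirely by never closing cycles inside the exploration: it builds a $2$-matching whose weight increase per step depends only on the $\cH_t$-measurable state, and defers cycle formation to the sprinkling phase.

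Finally, the claim at the end --- that one can remove the conditioning on $\e$ by passing to a rescaled scaling limit so that ``the same ODE system governs every admissible $\e$'' --- is not how the paper proceeds and is not obviously correct as stated. The paper instead fixes a small constant $\e_1$, works with $m\leq(1.5+\e_1^3)n$ on the kernel, and tracks all error terms as $O(\e_1^2)$ explicitly, checking that for $\e_1$ small enough the DEM hypotheses hold; the constant $\alpha$ is then read off from a fixed ODE system (equations \eqref{eq:Y}--\eqref{eq:Wt}) with a fixed initial condition, and the range of $\e$ only enters through Lemma 2.16 of \cite{frieze2016introduction} giving the kernel edge density. Your scaling-limit reduction would need a precise statement and proof to be a substitute.
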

Observe that our new bound reduces the gap between the published upper and lower bounds on $L(G(n,c/n))$ in the corresponding regime by a multiplicative factor of $0.5$. Our method for proving Theorem \ref{thm:main} can be extended to all $c>1$. Instead of doing so we use a weaker version of our arguments in order to extend \L{}uczak lower bound to all $c>1$ as follows.
\begin{theorem}\label{thm:weak}
Let $G\sim G(n,c/n)$ where $c>1$ is a constant. Let $n_2(G)$ denote the number of vertices of degree $2$ in the $2$-core of the giant component of $G$. In addition let $n(\cK(G))$ and $e(\cK(G))$ be the number of vertices and edges respectively of $\cK(G)$. Then w.h.p,
\begin{align}\label{eq:weak_bound}
L(G)\geq(1+o(1)) \bigg(1+ \frac{ n_2(G)}{e(\cK(G))}\bigg) n(\cK(G)).
\end{align}

\end{theorem}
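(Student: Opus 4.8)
The plan is to transfer a long path/cycle from the kernel $\cK(G)$ back to $G$ itself, exploiting the fact that most edges of $\cK(G)$ correspond, after uncontracting, to paths in $G$ whose internal vertices are the degree-2 vertices counted by $n_2(G)$. First I would invoke the version of the argument behind Theorem \ref{thm:main} (or the cited results of \L{}uczak and of Kim--Wormald) in its \textbf{weak form}: the kernel $\cK(G)$ of the giant component is, w.h.p., essentially a random multigraph with a given (near-3-regular-at-the-core) degree sequence, and such a multigraph contains a cycle $C_{\cK}$ through a constant fraction---indeed, after the easy version of the greedy/DFS analysis, through $(1-o(1))$---of its edges. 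For the \emph{weak} bound it in fact suffices to know that $\cK(G)$ has a cycle (or path) using $(1-o(1))e(\cK(G))$ of its edges; this is exactly the content one extracts from a stripped-down version of the differential-equation analysis, and is weaker than the constant $\alpha$ obtained in Theorem \ref{thm:main}.

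Next I would \textbf{lift} $C_{\cK}$ to $G$. Each edge $e$ of $\cK(G)$ is the image of a path $P_e$ in the 2-core of $G$ whose internal vertices all have degree $2$; these internal vertices are precisely partitioned among the edges of $\cK(G)$, so $\sum_{e\in\cK(G)} \big(|P_e|-1\big) = n_2(G)$, where $|P_e|$ denotes the number of edges of $P_e$. Replacing each kernel-edge of $C_{\cK}$ by its path $P_e$ produces a cycle $C$ in $G$ whose length (in vertices) equals the number of kernel-vertices used plus the number of internal degree-2 vertices used:
\begin{align}\label{eq:lift}
L(G) \;\geq\; |C| \;=\; \sum_{e\in C_{\cK}} |P_e| \;=\; \big|E(C_{\cK})\big| \;+\; \sum_{e\in C_{\cK}} \big(|P_e|-1\big).
\end{align}
Since $C_{\cK}$ uses $(1-o(1))e(\cK(G))$ of the kernel edges, it uses, w.h.p., a $(1-o(1))$ fraction of the $n_2(G)$ subdivision vertices as well---here one needs that the subdivision lengths $|P_e|-1$ are not adversarially concentrated on the few edges that $C_{\cK}$ misses. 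This is where the randomness is used: conditioned on $\cK(G)$, the assignment of the $n_2(G)$ degree-2 vertices to kernel edges, and the choice of which edges lie outside $C_{\cK}$, are governed by symmetric enough distributions that the missed edges carry only an $o(1)$ fraction of the total subdivision length w.h.p. Feeding this into \eqref{eq:lift} gives $L(G)\geq (1-o(1))\big(e(\cK(G))+n_2(G)\big)$, and then I would rewrite $e(\cK(G))+n_2(G) = \big(1+n_2(G)/e(\cK(G))\big)e(\cK(G))$ and use that $e(\cK(G)) = (1+o(1))n(\cK(G))$ for this regime---equivalently, that the kernel has average degree $3+o(1)$, a standard fact about the 2-core near the critical window---to obtain the stated bound $(1+o(1))\big(1+n_2(G)/e(\cK(G))\big)n(\cK(G))$.

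The main obstacle I anticipate is the decorrelation step: controlling $\sum_{e\notin C_{\cK}} (|P_e|-1)$, i.e.\ ruling out that the greedy/DFS cycle in the kernel systematically avoids exactly those edges that got subdivided into long paths in $G$. One clean way around this is to note that, \emph{given the kernel as an abstract multigraph}, the distribution of subdivision lengths is exchangeable across edges (the degree-2 vertices of the 2-core are thrown onto the $e(\cK(G))$ edges in an exchangeable way), and the cycle $C_{\cK}$ is chosen by an algorithm that only looks at the kernel's graph structure, not at the subdivision data; hence the expected total subdivision length captured is $(|E(C_{\cK})|/e(\cK(G)))\cdot n_2(G) = (1-o(1))n_2(G)$, with concentration from bounded differences or a second-moment bound. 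Everything else---the w.h.p.\ statement that $\cK(G)$ has a cycle through almost all its edges, and the asymptotics $e(\cK(G))\sim n(\cK(G))$---is either a weakening of Theorem \ref{thm:main} or classical, so the real work is this independence/exchangeability argument together with its concentration.
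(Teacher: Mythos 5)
Your high-level plan---find a cycle $C_\cK$ in the kernel by an algorithm that is oblivious to the subdivision data, then use exchangeability of the subdivision lengths over kernel edges to argue that the edges of $C_\cK$ carry their ``fair share'' of the $n_2(G)$ degree-2 vertices---is exactly the right idea, and it matches the paper's strategy (there the exchangeability is packaged in Lemma~\ref{lem:kernel}, and the cycle in $\cK(G)$ is produced by \gr followed by sprinkling, as in Theorem~\ref{thm:weakmult} and Lemma~\ref{lem:last}). However, two of your quantitative claims are false and they are load-bearing, so the proof as written does not close.

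First, you assert that $\cK(G)$ contains a cycle using $(1-o(1))\,e(\cK(G))$ of its \emph{edges}. This is impossible: $\cK(G)$ has minimum degree $3$, hence $e(\cK(G))\ge \tfrac{3}{2}n(\cK(G))$, while a simple cycle on any vertex subset of $\cK(G)$ has at most $n(\cK(G))$ edges; even a Hamilton cycle misses at least a third of the edges. What the argument (and the \gr$\!$--plus--sprinkling machinery) actually delivers is a cycle through $(1-o(1))\,n(\cK(G))$ \emph{vertices}, hence using $(1-o(1))\,n(\cK(G))$ edges. Second, you then invoke ``$e(\cK(G))=(1+o(1))n(\cK(G))$'' to convert your $(1-o(1))(e(\cK(G))+n_2(G))$ into the stated bound; this is also false by the same minimum-degree observation, and in fact Theorem~\ref{thm:weak} is stated for \emph{all} constant $c>1$, where the kernel's average degree is bounded away from (indeed, greater than) $3$, not close to $2$.

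The fix keeps your exchangeability idea but redoes the arithmetic. The near-Hamilton cycle $C_\cK$ has $(1-o(1))\,n(\cK(G))$ edges. Conditioned on the abstract multigraph $\cK(G)$, the subdivision lengths $(|P_e|-1)_{e\in E(\cK(G))}$ are exchangeable with total $n_2(G)$ (Lemma~\ref{lem:kernel}), and $C_\cK$ is produced by an algorithm that looks only at $\cK(G)$ as a graph. Hence the edges of $C_\cK$ carry, w.h.p., a $(1-o(1))\,n(\cK(G))/e(\cK(G))$ fraction of the total subdivision mass, not a $(1-o(1))$ fraction. Plugging this into your display \eqref{eq:lift} gives
\begin{align*}
L(G) \;\ge\; (1-o(1))\Bigl(n(\cK(G)) + \tfrac{n(\cK(G))}{e(\cK(G))}\,n_2(G)\Bigr)
\;=\; (1-o(1))\Bigl(1+\tfrac{n_2(G)}{e(\cK(G))}\Bigr)n(\cK(G)),
\end{align*}
which is exactly \eqref{eq:weak_bound}, with no need for any relation between $e(\cK(G))$ and $n(\cK(G))$. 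You still need to justify the existence of the near-Hamilton cycle in $\cK(G)$ and the concentration of the captured subdivision mass; the paper does the former via Lemma~\ref{lem:performance2} (the unweighted \gr run) plus the auxiliary digraph/sprinkling argument of Lemma~\ref{lem:last}, and the latter can be done, as you suggest, by a bounded-differences or second-moment argument using Lemma~\ref{lem:kernel}.
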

When $G\sim G(n,c/n)$, $c=1+\e$,  $\e=\e(n)=o(1)$ and $\e^3n\to \infty$ we have that $n_2(G)=(1+o(1))2\e^2n$, $n(\cK(G))=o(n_2(G))$ and $n(\cK(G))=(1.5+o(1))e(\cK(G))$ w.h.p. In this regime Theorem \ref{thm:weak} implies that $L(G)\geq (1+o(1))4\e^2n/3$ thus recovering \L{}uczak's result.

We deduce theorems \ref{thm:main} and \ref{thm:weak} from the 
corresponding ones related to the uniform random multigraph $G^{\cM}(n,m)$ stated below. For positive integers $n,m$ we let $G^{\cM}(n,m)$ be the random multigraph  on $[n]$ with $m$ edges where every edge is chosen independently and uniformly at random from $[n]\times [n]$.
 
\begin{theorem}\label{thm:mainmult}
Let $G\sim G^\cM(n,n/2+s)$ where $s=s(n) \leq \epsilon_0 n/2$ for some sufficiently small $\epsilon_0>0$ and $s^3 n^{-2}\to \infty$. Then w.h.p. 
$L(G)\geq \alpha \cdot 16s^2/3n\geq 6.325s^2/n$ where $\alpha>1.186$ is derived from a system of differential equations. 
\end{theorem}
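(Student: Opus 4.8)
The plan is to turn the theorem into a purely combinatorial statement about the weighted kernel and then analyse a greedy path--building procedure on it. Expose $G=G^\cM(n,n/2+s)$ in stages, conditioning first on the degree sequence of its $2$-core. Standard facts about the barely supercritical random multigraph (applicable precisely because $s^3n^{-2}\to\infty$) give that w.h.p.\ the $2$-core lies inside the giant, has $n_2=(1+o(1))\,8s^2/n$ vertices of degree $2$ and $N=(1+o(1))\,32s^3/(3n^2)\to\infty$ vertices of degree $3$, with only $o(N)$ vertices of higher degree. Given its degree sequence, the $2$-core is a uniform random multigraph with that sequence, so---after deleting from its kernel $\cK=\cK(G)$ the $o(N)$ high-degree vertices, loops and parallel edges, each of which changes every quantity below by only $o(n_2)$---we may treat $\cK$ as a uniform random cubic multigraph on $N$ vertices, with $M=e(\cK)=(3/2+o(1))N$ edges. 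Crucially, given $\cK$ as a graph and given $n_2$, the numbers $(w_e)_{e\in E(\cK)}$ of subdivision (degree-$2$) vertices carried by the kernel edges are \emph{uniform} over all nonnegative integer solutions of $\sum_e w_e=n_2$ (every composition yields exactly $n_2!$ labelled $2$-cores); hence the $w_e$ behave like i.i.d.\ geometric variables of mean $\mu:=n_2/M=(1+o(1))\,n/(2s)$, i.e., since $\mu\to\infty$, like i.i.d.\ $\mathrm{Exponential}(1/\mu)$ variables. This exponential profile is what leaves room for a genuine improvement: for exponential weights the share of the total weight held by the heaviest $\theta$-fraction of the edges is the scale-invariant quantity $\theta(1-\ln\theta)$, equal to $\approx0.937$ at $\theta=2/3$, and this does not degrade as $s/n\to0$.

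\textbf{Step 2: a heavy cycle of $\cK$ suffices.} A simple cycle $C$ of $\cK$ with edge set $F$ (so $|F|=|V(C)|\le N=o(n_2)$) expands, by reinserting subdivision vertices, into a cycle of $G$ of length $|V(C)|+\sum_{e\in F}w_e$, so $L(G)\ge(1-o(1))\max_C\sum_{e\in F(C)}w_e$. It therefore suffices to produce, w.h.p., a cycle of the weighted cubic multigraph $(\cK,(w_e))$ of total weight at least $\alpha\cdot\tfrac23 n_2$ with $\alpha>1.186$; as $\tfrac23 n_2=(1+o(1))\,16s^2/(3n)$, this is exactly the asserted bound. (Taking $\alpha=1$, e.g.\ by routing a Hamilton cycle of $\cK$ uncorrelated with the weights, recovers \L{}uczak's $\tfrac23 n_2$.)

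\textbf{Step 3: the greedy analysis.} Work in the configuration model for the random cubic multigraph, revealing pairings of half-edges only as the algorithm inspects them and sampling the weight $w_e\sim\mathrm{Exponential}(1/\mu)$ of each edge at the moment it is created, so the process is fully ``online''. The \gr procedure grows a path from a fixed start vertex, repeatedly extending from its current endpoint to a neighbour chosen greedily from local weight and degree information (roughly: prefer heavy incident kernel edges while avoiding getting stuck prematurely), pruning dead-end branches, and keeping a $o(N)$-sized reservoir of untouched vertices. Track a bounded vector of state variables---the numbers of remaining vertices and half-edges of each relevant type, the current path length, and the accumulated weight rescaled by $\mu$---and use Wormald's differential-equation method to show these remain concentrated, throughout the run, around the solution of an explicit autonomous ODE system; the terminal value of the weight coordinate is $\alpha\cdot\tfrac23 n_2$ for a constant $\alpha$ read off from the system, and a (partly numerical) analysis of the system certifies $\alpha>1.186$. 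Finally close the resulting long path into a cycle at a cost of only $o(n_2)$ weight, using the reservoir or a rotation argument that exploits the expansion of the untouched cubic part to join the two endpoints; this completes the proof.

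\textbf{Main obstacle.} Practically all the work is in Step 3: choosing a version of \gr whose state is low-dimensional and conditionally Markovian despite the adaptivity, checking the boundedness, Lipschitz and trend hypotheses of Wormald's theorem for it, and then solving or rigorously bounding the resulting ODE system tightly enough to certify $\alpha>1.186$ rather than merely $\alpha>1$. A secondary, more technical, difficulty is making Step 1 rigorous uniformly over the whole range $s^3n^{-2}\to\infty$, $s\le\e_0 n/2$---in particular the reduction from ``$2$-core with a concentrated degree sequence'' to ``uniform random cubic multigraph with i.i.d.\ exponential edge weights'', together with the control of the various $o(n_2)$ errors from high-degree kernel vertices, loops, parallel edges, and the components other than the giant.
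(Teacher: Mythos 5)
Your Steps~1 and~2 are essentially the paper's own reduction: Lemma~\ref{lem:kernel} gives exactly your ``uniform composition $\Rightarrow$ geometric weights conditioned on the sum'' picture, Lemma~\ref{lem:kernelexp1} handles the geometric-to-exponential passage (via a one-sided coupling $\gamma^{-1}Y_i\leq X_i$, which is the one-sidedness you actually need for a lower bound), and the target ``heavy cycle of weight $\alpha\cdot\tfrac{2}{3}n_2$'' is the right translation of the theorem. The numbers $n_2=(1+o(1))8s^2/n$ and the factor $16s^2/(3n)$ also check out.

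The genuine gap is in Step~3, where your description of \gr and of how the output becomes a single cycle diverges from what can actually work. First, \gr in the paper is a $2$-matching algorithm, not a single-path DFS from a fixed start vertex: it maintains a set $M_t$ of edges in which every vertex has $M$-degree at most $2$, it prioritises matching ``dangerous'' vertices (those that risk being stranded with $M$-degree $<2$), and only when there are no dangerous vertices does it add a globally heaviest remaining edge. A single path growing from one endpoint in a cubic graph gets stuck too often to cover nearly all of $\cK$, and the weight gain you need comes precisely from the many ``free'' steps where \gr can pick a heaviest edge anywhere. Second, your ``fully online'' weight revelation is incompatible with that globally-heaviest-edge step; the paper runs \gr on a fully revealed weighted graph and instead tracks the running maximum weight $wmax_t$ as one of the ODE coordinates (equation~\eqref{eq:changemax}), which is the technical trick that makes the Trend Hypothesis tractable. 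Third, and most importantly, the output of \gr is a $2$-matching spanning $n-O(n^{0.9})$ vertices but $O(n^{0.9})$ separate path/cycle components (Lemma~\ref{lem:performance}(b)), so there is no single long path to ``close up with a rotation argument''. The paper instead \emph{sets aside} $O(n^{0.9})$ random edges (the split $\bx=\bx_1\bx_2$) before running \gr, then cuts the $2$-matching into $\Theta(n^{0.905})$ short paths of length $n^{0.095}$, builds an auxiliary digraph on these paths with arcs given by the set-aside edges between $Y_2$-vertices near their ends, couples it with a dense random digraph, and invokes Hamiltonicity of random digraphs to merge everything into one cycle that keeps a $(1-o(1))$ fraction of the $2$-matching's weight; parts~(c) and~(d) of Lemma~\ref{lem:performance} exist to guarantee that enough $Y_2$-vertices survive near the ends of the paths to make the auxiliary digraph dense. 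This edge-reservation plus absorb-via-digraph-Hamiltonicity step is a real structural component of the argument, not a routine $o(n_2)$ cleanup, and your proposal currently has nothing in its place.
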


\begin{theorem}\label{thm:weakmult}
Let $G\sim G^\cM(n,n/2+s)$ where $s=s(n)=\Theta(n)$. Let $n_2(G)$ denote the number of vertices of degree 2 in the 2-core of the giant component of $G$. In addition let $n(\cK(G))$ and $e(\cK(G))$ be the number of vertices and edges respectively of $\cK(G)$. Then w.h.p,
\[L(G)\geq(1+o(1)) \bigg(1+ \frac{ n_2(G)}{e(\cK(G))}\bigg) n(\cK(G)) .\] 
\label{eq:weakthmmult}
\end{theorem}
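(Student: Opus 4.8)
The engine of the proof is the elementary observation that a cycle of the kernel \emph{lifts} to a cycle of $G$: if $C$ is a cycle of $\cK:=\cK(G)$ through $k$ of its vertices (equivalently through $k$ of its edges), then un-suppressing the degree-$2$ vertices of the $2$-core that lie on the edges of $C$ turns $C$ into a cycle of the $2$-core, hence of $G$, of length $k+D_C$, where $D_C$ is the number of degree-$2$ vertices of the $2$-core lying on those $k$ kernel edges. Since $\cK$ has $e(\cK):=e(\cK(G))$ edges carrying $n_2:=n_2(G)$ degree-$2$ vertices in total, a cycle whose edge set is "typical" carries about $n_2/e(\cK)$ per edge, so a near-spanning cycle of $\cK$ (one with $k=(1-o(1))n(\cK)$) should lift to a cycle of $G$ of length $(1-o(1))\big(1+\tfrac{n_2}{e(\cK)}\big)n(\cK)$. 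Making this precise is the whole proof.

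First I would record the standard structural description of $\cK$ when $s=\Theta(n)$: the $2$-core of $G^{\cM}(n,n/2+s)$ lies, up to $o(n)$ vertices, inside the giant component; conditional on its degree sequence it is a uniform random multigraph with that degree sequence, so $\cK$, conditional on its degree sequence $\bd$, is the corresponding configuration-model multigraph; and $\bd$ is w.h.p.\ concentrated — it has minimum entry $3$, maximum entry $O(\log n/\log\log n)$, $\Theta(n)$ entries, and $n(\cK),\,e(\cK),\,n_2$ are each $\Theta(n)$ and concentrated about explicit constants times $n$. I would also establish the key conditional law: given $\cK$ and $n_2$, the vector $(\ell_f)_{f\in E(\cK)}$ recording how many degree-$2$ vertices subdivide each kernel edge is, on the at least $(1-o(1))e(\cK)$ simple edges of $\cK$, exchangeable with sum $n_2$ — this follows from the symmetry of the configuration-model construction of the $2$-core (the number of $2$-cores reducing to a fixed kernel with a fixed subdivision profile is seen to depend on the profile only through a permutation-invariant count), a bounded number of loops and parallel edges being deletable without asymptotic cost.

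Next, because $\cK$ is a configuration-model multigraph with a well-behaved degree sequence of minimum degree $3$, it is w.h.p.\ connected with the expansion one needs for a P\'osa rotation–extension argument (alternatively one may invoke a Hamiltonicity theorem for random graphs with given degree sequences); either way $\cK$ contains w.h.p.\ a cycle $C$ through $(1-o(1))n(\cK)$ vertices, hence through $(1-o(1))n(\cK)$ edges. Crucially $C$ is a function of $\cK$ alone, so we may now reveal $(\ell_f)_f$ last: since this family is exchangeable over the (essentially all) edges of $\cK$ and $E(C)$ is a fixed subset of size $(1-o(1))n(\cK)$, a concentration bound for sampling without replacement — using the $O(\log n/\log\log n)$ bound on $\max_f \ell_f$ — yields $D_C=\sum_{f\in E(C)}\ell_f=(1-o(1))n(\cK)\cdot\tfrac{n_2}{e(\cK)}$ w.h.p. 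Lifting $C$ then gives a cycle of $G$ of length $(1-o(1))n(\cK)+(1-o(1))n(\cK)\tfrac{n_2}{e(\cK)}\ge(1+o(1))\big(1+\tfrac{n_2}{e(\cK)}\big)n(\cK)$, which is Theorem~\ref{thm:weakmult}; Theorem~\ref{thm:weak} is then read off by the routine transfer between $G(n,p)$ and $G^{\cM}(n,m)$.

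The main obstacle is the conditional-exchangeability step together with ordering the conditioning correctly: one must argue that, given $\cK$ and $n_2$, the subdivision data really is symmetric across kernel edges and independent of the already-chosen cycle $C$, and one must control the heavy tail of the $\ell_f$'s well enough to apply concentration. Fixing $C$ as a function of $\cK$ and revealing the degree-$2$ assignment last is the clean way around this. The near-spanning cycle in $\cK$ is the secondary technical point, but it is essentially off-the-shelf given the configuration-model description and minimum-degree-$3$ expansion. Everything else — the concentration of $\bd$ and of $n(\cK),e(\cK),n_2$, the $2$-core-inside-the-giant statement, and the $G(n,p)\leftrightarrow G^{\cM}(n,m)$ transfer — is standard.
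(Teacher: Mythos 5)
Your high-level framework is right and matches the paper's: pass to the kernel, use Lemma \ref{lem:kernel}'s exchangeability of the subdivision lengths to argue that any near-spanning cycle in $\cK(G)$ lifts to a cycle of length about $\big(1 + n_2/e(\cK)\big)n(\cK)$ in $G$, and conclude. The exchangeability-plus-concentration step, with the cycle chosen as a function of $\cK$ alone and the subdivision profile revealed afterwards, is essentially the paper's Lemma \ref{lem:kernel} combined with a standard sampling-without-replacement bound, and that part of your argument is sound.

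The gap is the near-Hamiltonicity of $\cK(G)$. For $s=\Theta(n)$ the kernel is a random \emph{multigraph} with minimum degree $3$, unbounded (logarithmic) maximum degree, and a nondegenerate degree distribution; it is not close to a random $3$-regular graph, so Robinson--Wormald is unavailable, and a bare P\'osa rotation--extension argument does not produce a cycle through $(1-o(1))n(\cK)$ vertices from a min-degree-$3$ configuration-model graph (vertex expansion by a factor $2$ already fails for sets larger than roughly a third of the vertex set). You gesture at ``a Hamiltonicity theorem for random graphs with given degree sequences'', but the specific multigraph, unbounded-degree, near-spanning setting here is precisely where the paper has to do its real work: it splits $G=G_1\cup G_2$, runs \textsc{2-Greedy} with unit weights on $\cK(G_1)$ (Lemma \ref{lem:performance2}) to produce a $2$-matching of size $n(\cK)-O(n(\cK)^{0.9})$ with $O(n(\cK)^{0.9})$ components, and then uses the sprinkled edges of $G_2$ together with the auxiliary-digraph Hamiltonicity argument from the proof of Lemma \ref{lem:last} to stitch those paths into a single near-spanning cycle. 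Until you either cite and verify a concrete near-Hamiltonicity theorem covering random multigraphs with this degree sequence, or reproduce something like the $2$-matching-plus-sprinkle construction, the assertion that $\cK(G)$ contains a cycle through $(1-o(1))n(\cK)$ vertices is unsupported; everything downstream of it is fine.
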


To deduce theorems \ref{thm:main} and \ref{thm:weak} from theorems \ref{thm:mainmult} and \ref{thm:weakmult} respectively we note that when $s=O(n)$ the multigraph $G\sim G^\cM(n,n/2+s)$ is simple with some  probability $p(s)=\Omega(1)$. In addition the property of having a longest cycle of length at least $\ell$ is an increasing one for all $\ell=\ell(n)$.

\subsection{ \L{}uczak's lower bound}
We now give a sketch of  \L{}uczak's lower bound. 
Given $G\sim G(n,(1+\epsilon)/n)$ first generate the $2$-core of its giant component which we denote by $C_2(G)$ and then its kernel $\cK(G)$. Define a weight function $w:E(\cK(G))\mapsto \mathbb{N}^+$ as follows. For every edge $e$ of $\cK(G)$ let $w(e)$ be equal to the length of the maximal path in $C_2(G)$ which was contracted resulting to $e$. Note that we let $w_e=1$  if $e\in E(C_2(G))$. 
For a cycle $C$ in $\cK$ let $w(C)=\sum_{e\in C}w(e)$. Observe that for $k\geq 1$ a cycle $C$ in $\cK(G)$ of $w$-weight $k$ i.e. $w(C)=k$  corresponds to a cycle of length $k$ in $G$. Now, if $\epsilon=o(1)$ then the distribution of $\cK(G)$ is close to the distribution of a random 3-regular graph which is w.h.p. Hamiltonian (see \cite{robinson1992almost}). Note that the sum of weights in $\cK(G)$ is $n_2(G)+e(\cK(G))$ hence every edge of $\cK(G)$ has expected weight $1+n_2(G)/e(\cK(G)$. By linearity, a random Hamilton cycle in $\cK(G)$ has expected weight
$$\bigg(1+ \frac{n_2(G)}{e(\cK(G))}\bigg) n(\cK(G))=\frac{(1+o(1))4\epsilon^2 n}{3}.$$
At the above line line we used that w.h.p. (i) $C_2(G)$ spans $(1+o(1))2\epsilon^2 n$ vertices of degree 2 and (ii) $3n(\cK(G))=(1+o(1))e(\cK(G))$. By further studying the distribution of $w$ one can show that $w(H)\geq (1+o(1))4\epsilon^2 n/3$ w.h.p.

\subsection{Sketch of the proof of Theorem \ref{thm:main}}
A 2-matching $M$ of a graph $G$ is a set of edges such that every vertex in $G$ is incident to at most 2 edges of $M$. To prove Theorem \ref{thm:main} 
we first generate the kernel $\cK(G)$ of $G\sim G^\cM(n,m)$ and define the weight function $w$ on its edges. Then we peel from $\cK(G)$ a random set of edges which we put aside. We continue by implementing the \gr algorithm in order to construct a small collection of $w$-heavy vertex disjoint paths $\cP$ in the kernel of $G$ that cover its vertex set. Finally we utilise the edges that we put aside in order to merge the paths in $\cP$ into a cycle $C$ in $\cK(G)$ that uses the majority of the edges spanned by these paths. We use this later fact in order to argue that $C$ is $w$-heavy in $\cK(G)$ and therefore corresponds to a long cycle in $G$.

The \gr algorithm can be considered as an extension of the Karp-Sipser algorithm \cite{karp1981maximum}. The specific version of the \gr algorithm is taken from \cite{anastos2020hamilton} while variations and extensions of it have also been studied in \cite{anastos2021k}, \cite{bal2012greedy} and \cite{frieze2014greedy}. The \gr algorithm sequentially grows a 2-matching $M$. In parallel it removes edges and vertices from $G$. It always prioritises and matches vertices that are incident to $k\in \{0,1\}$ edges in $M$ and at most $2-k$ edges in $G$. If such vertices do not exist  then it adds to $M$ a $w$-heaviest edge. A more formal description of \gr is given at Section \ref{sec:2greedy}. To analyse the rate in which the weight of $M$ increases during the execution of \gr and subsequently the weight of the final matching we use the method of differential equations for dynamic concentration \cite{wormald1999differential}. 

\subsection{Organization of the paper} 
The rest of the paper is organised as follows. At Section \ref{sec:kernel} we study the kernel of a random multigraph. Thereafter at Section \ref{sec:2greedy} we present and analyse the \gr algorithm. We prove theorems  \ref{thm:mainmult} and \ref{thm:weakmult} at Section \ref{sec:proofmain}. 

\section{The Kernel of a random multigraph}\label{sec:kernel}
For $\gamma\in (0,1]$ we denote by $Geom(\gamma)$ the geometric random variable with probability of success $\g$ i.e. if $X\sim Geom(\g)$ then $\Pr(X=k)=\gamma(1-\g)^{k-1}$ for $k=1,2,...$. For this section we fix  $G\sim G^{\cM}(n,n/2+s)$ where $s^3n^{-2}\to \infty$ and $s=O(n)$. We also let $C_2(G)$ and $\cK(G)$ be the 2-core of the giant component of $G$ and the kernel of $G$ respectively. In addition for $k\geq 2$ we let $n_k(G)$ be the number of vertices of degree $k$ in $C_2(G)$. Finally we define the function $w:E(\cK(G))\mapsto \{1,2,...\}$ as follows. For $e\in E(\cK(G))$ let $w(e)$ be equal to the length of the path in $C_2(G)$ contracted to $e$.
\begin{lemma}\label{lem:kernel}
Conditioned on $n_2(G)$, $\sum_{k\geq 2} n_k(G)$ and $\sum_{k\geq 2} kn_k$  we have that $\cK(G)$ is distributed as a  random connected multigraph on $n'=\sum_{k\geq 3} n_k$ vertices with $m'=0.5\sum_{k\geq 3} kn_k$ edges and minimum degree 3. In addition the weights $\{w(e)\}_{e\in E(\cK(G))}$ are distributed as independent $Geom(\gamma)$ random variables conditioned or their sum being equal to $n_2+m'$.
\end{lemma}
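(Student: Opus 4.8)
The plan is to exploit the fact that the conditioning fixes the degree sequence of $C_2(G)$ up to the relabeling of which vertices are which, and then to argue that within this class $C_2(G)$ is uniformly distributed. First I would recall that the 2-core of the giant component of $G^{\cM}(n,n/2+s)$ is itself a uniformly random multigraph with a given number of vertices of each degree $\geq 2$: indeed, conditioned on which vertex set $V_2$ it spans and on the degree sequence $(d_v)_{v\in V_2}$, every multigraph on $V_2$ with that degree sequence, minimum degree $\geq 2$, and giving a connected graph is equally likely. This is because the edges of $G^{\cM}(n,n/2+s)$ are i.i.d.\ uniform, so conditioning on the edge-multiset having a prescribed 2-core structure (a measurable event) leaves the uniform measure on all edge-multisets realizing it; the only thing that breaks exchangeability between two multigraphs with the same degree sequence is a difference in the number of ways to order the edges, but since the number of edges is also fixed this cancels. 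Hence $C_2(G)$, conditioned on $n_2(G)$, $\sum_{k\geq2}n_k(G)$ and $\sum_{k\geq2}kn_k(G)$ — equivalently, conditioned on the full degree sequence being a uniformly chosen one with these aggregate statistics — is a uniform random connected multigraph with that degree sequence and minimum degree $2$.

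Next I would describe the contraction map explicitly and track where the randomness goes. Decompose $C_2(G)$ into its kernel-edges: each edge of $\cK(G)$ corresponds to a maximal path in $C_2(G)$ whose internal vertices all have degree $2$, and the vertices of degree $\geq 3$ become the vertices of $\cK(G)$. So generating a uniform $C_2(G)$ with the prescribed degree sequence is equivalent to the two-stage experiment: (i) choose the multigraph structure on the degree-$\geq3$ vertices, i.e.\ choose $\cK(G)$ as a multigraph on $n'=\sum_{k\geq3}n_k$ vertices with degree sequence $(d_v)_{v:\,d_v\geq3}$ and $m'=\tfrac12\sum_{k\geq3}kn_k$ edges (which forces connectivity and min-degree $3$ exactly when $C_2(G)$ is connected with min-degree $2$); and (ii) distribute the $n_2=n_2(G)$ degree-$2$ vertices along the $m'$ edges of $\cK(G)$, i.e.\ choose a weak composition of $n_2$ into $m'$ nonnegative parts $(a_e)_{e\in E(\cK(G))}$ and then, for each $e$, linearly order the $a_e$ vertices assigned to it along the contracted path. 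Crucially, the number of orderings in step (ii) is $n_2!$ regardless of the composition $(a_e)$ — it only depends on which vertices, not how they are split — so step (ii) factorizes: the multigraph $\cK(G)$ (step (i)) is independent of the composition $(a_e)$, and the composition $(a_e)$ is uniform over all weak compositions of $n_2$ into $m'$ parts. That is exactly the statement that, conditionally on $\cK(G)$, the weights $w(e)=a_e+1$ are uniform over all positive integer tuples summing to $n_2+m'$.

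The last step is the standard identity linking a uniform composition to conditioned geometrics: if $(X_e)_{e\in E(\cK(G))}$ are i.i.d.\ $Geom(\gamma)$ for any fixed $\gamma\in(0,1]$, then for positive integers $(k_e)$ with $\sum_e k_e = N$ we have $\Pr\big((X_e)=(k_e)\big)=\gamma^{m'}(1-\gamma)^{N-m'}$, which does not depend on the particular $(k_e)$; hence the conditional law of $(X_e)$ given $\sum_e X_e = N$ is uniform over all such tuples. Applying this with $N=n_2+m'$ identifies the law of $(w(e))_e$ claimed in the lemma, and since step (i) already gave $\cK(G)$ as the asserted uniform random connected multigraph independent of the weights, we are done.

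The main obstacle I anticipate is making the first paragraph fully rigorous — namely justifying that the 2-core of the giant of $G^{\cM}(n,n/2+s)$, conditioned on the three aggregate statistics, is \emph{exactly} a uniform random connected multigraph with a (uniformly chosen, consistent) degree sequence. One has to be careful that conditioning on $n_2$, $\sum_{k\geq2}n_k$ and $\sum_{k\geq2}kn_k$ rather than on the whole degree sequence does not introduce a bias among degree sequences beyond the natural one, and that "2-core of the giant component" is a function of the edge multiset in a way that respects the uniform measure (in particular that whether a multigraph is connected, and extracting its 2-core, commutes with relabeling and with the uniform distribution on edge-multisets of fixed size). The cleanest route is probably to condition first on the full degree sequence of $C_2(G)$ and on its vertex set, observe uniformity there by the exchangeability-of-i.i.d.-edges argument above, and then average over degree sequences consistent with the given aggregates; the per-edge-count cancellation is what keeps the measure uniform throughout. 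Everything after that — the contraction bijection and the geometric identity — is routine.
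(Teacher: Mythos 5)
Your proposal takes essentially the same route as the paper: condition so that $C_2(G)$ is uniform over connected multigraphs with the prescribed aggregate parameters, observe that the contraction map $C_2(G)\mapsto(\cK(G),w)$ has constant-size fibers so that $(\cK(G),w)$ is uniform over the product set (hence the two coordinates are independent with each uniform), and close with the standard fact that i.i.d.\ $Geom(\gamma)$ variables conditioned on their sum are uniform over positive compositions. Your intermediate conditioning on the full degree sequence of $C_2(G)$ together with the explicit $n_2!$-cancellation is just a more detailed unpacking of the paper's terse ``each such graph is equally likely'' and ``$1$-to-$1$ correspondence'' assertions, and the caveats you flag at the end (justifying the uniformity of $C_2(G)$, and averaging back over degree sequences) are exactly the points the paper's own proof also treats lightly.
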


\begin{proof}
Let $\gamma\in(0,1)$. Conditioned on $n_2(G)$, $\sum_{k\geq 2} n_k(G)$ and $\sum_{k\geq 2} kn_k(G)$ we have that $C_2(G)$ is distributed uniformly at random over all connected graphs on $\sum_{k\geq 2} n_k(G)$ vertices with  
\\$0.5\sum_{k\geq 2} kn_k(G)$ edges, $n_2(G)$ vertices of degree 2 and minimum degree 2, the reason being that each such graph is equally likely to be equal to $C_2(G)$ conditioned on the aforementioned quantities. Thus the pair $(\cK(G),w)$ is such that $\cK(G)$ is distributed as a random connected multigraph on $n'$ vertices with $m'$ edges and minimum degree 3 and $(w(e_1),....,w(e_{m'}))$ is (independently of the realisation of $\cK(G)$) chosen uniformly from the set $S=\{\bx\in (\mathbb{N}^+)^{m'} :\sum_{i=1}^m \bx_i = n_2+ m'\}$ the reason being there is $1$ to $1$ correspondence between such pairs $(\cK(G),w)$ and possible realizations of $C_2(G)$. Finally note that the joint distribution of $(X_1,...,X_{m'})$ where  $\{X_i\}_{i\in [m']}$ are i.i.d $Geom(\gamma)$ conditioned on their sum being equal to $n_2(G)+m'$ assigns the uniform measure to the elements of $S$.
\end{proof}
When we will later study $\cK(G)$ it would be in our interest to only consider the case where $\cK(G)$ does not span many loops and multiple edges. The following lemma enables us to do so.  Its proof is located at Appendix \ref{sec:app:lem:loops}.
\begin{lemma}\label{lem:loops}
Let $f(n)$ be any function of $n$ such that $\lim_{n\to \infty}f(n) =\infty$ and $G\sim G^{\cM}(n,m)$ where $m=O(n)$. Then w.h.p. $\cK(G)$ has fewer than $f(n)$ loops and multiple edges. In addition w.h.p. $\cK(G)$ does not span a multiple edge/loop of multiplicity larger than 2. 
\end{lemma}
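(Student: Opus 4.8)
The plan is to derive Lemma~\ref{lem:loops} from Lemma~\ref{lem:kernel} by a first-moment calculation carried out in the configuration model. Conditioning on $n_2(G)$, $\sum_{k\ge 2}n_k(G)$ and $\sum_{k\ge 2}kn_k(G)$ (and then, harmlessly, on the full degree sequence of $\cK(G)$ as well), Lemma~\ref{lem:kernel} tells us that $\cK(G)$ is a uniformly random connected multigraph on $n'=\sum_{k\ge 3}n_k(G)$ vertices with degree sequence $(d_i)_{i=1}^{n'}$ (each $d_i\ge 3$) and $m'=\tfrac12\sum_i d_i=\tfrac12\sum_{k\ge 3}kn_k(G)$ edges. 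I would first record a ``good'' event $\mathcal E$, holding w.h.p.\ over $G$, on which this degree sequence satisfies (i) $\sum_i d_i^2=O(m')$ and (ii) $\Delta:=\max_i d_i=O(\log m'/\log\log m')$, so in particular $\Delta^2=o(m')$; I would also include in $\mathcal E$ the event that $m'\to\infty$, since when $\cK(G)$ has only $O(1)$ edges the conclusion is immediate. Both (i) and (ii) are instances of the classical fact that the $2$-core of a sparse random (multi)graph has bounded average and average-square degree and maximum degree logarithmic in its \emph{own} size: (i) because the $2$-core of $G^{\cM}(n,m)$ with $m=O(n)$ has $\sum_v d_v(d_v-1)=O(n')$ w.h.p., and (ii) because, conditionally on $(n',m')$, the kernel degree sequence is close to $n'$ i.i.d.\ bounded-mean Poisson variables conditioned to be $\ge 3$, whence $\mathbb E\,\#\{i:d_i\ge k\}\le n'\cdot O(1)^k/k!=o(1)$ once $k\gtrsim \log n'/\log\log n'$ and $n'=\Theta(m')$. (These can also be read off from the estimates on the $n_k(G)$ used elsewhere in the paper.)

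Next, on $\mathcal E$ I would couple $\cK(G)$ to the configuration-model multigraph $H^*$ on the degree sequence $(d_i)$ — the uniform pairing of the $2m'$ half-edges. This is legitimate because the uniform multigraph with degree sequence $(d_i)$ differs from $H^*$ only by a reweighting $H\mapsto C\cdot\prod_v 2^{\ell_v(H)}\ell_v(H)!\prod_{u<v}m_{uv}(H)!\big/\prod_v d_v!$, where the exhibited ratio is at most $1$ and $C=\Theta(1)$ (the configuration multigraph is simple with probability $\Omega(1)$ by (i)); hence $\mathbb E_{\mathrm{unif}}[X]=O(\mathbb E_{H^*}[X])$ for every non-negative functional $X$. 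Similarly, imposing connectedness (the minimum-degree constraint being automatic) only inflates expectations of non-negative functionals by a factor $1/\Pr(\text{connected})=O(1)$. So it suffices to bound, in $H^*$, four routine half-edge--pairing expectations: the expected number of loops is $\Theta\!\big(\sum_i\binom{d_i}{2}/m'\big)=O(1)$ and the expected number of pairs of vertices joined by $\ge 2$ parallel edges is $\Theta\!\big((\sum_i\binom{d_i}{2})^2/(m')^2\big)=O(1)$, both by (i); while the expected number of pairs joined by $\ge 3$ parallel edges is $\Theta\!\big((\sum_i\binom{d_i}{3})^2/(m')^3\big)=O(\Delta^2/m')=o(1)$ and the expected number of vertices carrying $\ge 2$ loops is $\Theta\!\big(\sum_i\binom{d_i}{4}/(m')^2\big)=O(\Delta^2/m')=o(1)$, both by (ii).

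It then follows that, conditionally on $\mathcal E$, the number of loops of $\cK(G)$ plus the number of pairs of vertices joined by a multiple edge has expectation $O(1)$, so by Markov's inequality it is below $f(n)$ with conditional probability $1-O(1/f(n))=1-o(1)$ (this is the only place $f(n)\to\infty$ is used); and, again conditionally on $\mathcal E$, the probability that some vertex carries $\ge 2$ loops or some pair of vertices is joined by $\ge 3$ parallel edges is $o(1)$. Since $\Pr(\mathcal E)=1-o(1)$, un-conditioning yields both assertions of the lemma — for the first one using that, on the event that no multiplicity exceeds $2$, the total number of loop/multiple-edge occurrences is at most twice the number of ``bad pairs'' just bounded.

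I expect the one genuinely delicate ingredient to be item (ii) of $\mathcal E$: a maximum-degree bound for the \emph{kernel} of the form $\Delta=o(\sqrt{m'})$ holding \emph{uniformly} across the entire range $s^3n^{-2}\to\infty$, where $m'$ may grow as slowly as, say, a fixed power of $\log\log n$. The naive bound $\Delta=O(\log n/\log\log n)$ inherited from the degrees of $G$ is then useless, and one must genuinely exploit the sparsity of the kernel and the fact that its degree sequence is, conditionally on $(n',m')$, close to i.i.d.\ truncated Poisson. Everything else is a textbook first-moment computation in the configuration model combined with the standard comparison between the configuration-model and uniform multigraph measures.
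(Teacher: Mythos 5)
Your proposal hinges on the comparison step: you claim that $\mathbb{E}_{\mathrm{unif}}[X] = O(\mathbb{E}_{H^*}[X])$ for every non-negative functional $X$, where ``unif'' is the uniform multigraph on the kernel degree sequence and $H^*$ is the configuration-model multigraph on the same sequence. This is false, and the inequality you need goes the wrong way. For a fixed degree sequence $(d_v)$ with $2m'=\sum_v d_v$ and $N$ admissible multigraphs, the correct relation is
\[
\frac{\Pr_{\mathrm{unif}}(H)}{\Pr_{H^*}(H)} \;=\; K\cdot \prod_v 2^{\ell_v(H)}\,\ell_v(H)!\;\prod_{u<v} m_{uv}(H)!\,,
\qquad K=\frac{(2m'-1)!!}{N\prod_v d_v!}\,,
\]
and the product on the right is always $\geq 1$, with equality exactly for simple $H$; summing over simple $H$ and using that both models are simple with probability $\Omega(1)$ gives $K=\Theta(1)$. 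Hence $\Pr_{\mathrm{unif}}(H)\geq \Omega(1)\cdot\Pr_{H^*}(H)$: the uniform multigraph puts \emph{more} weight on loops and multiple edges than the configuration model, so $\mathbb{E}_{\mathrm{unif}}[X]\geq\Omega(\mathbb{E}_{H^*}[X])$, not the reverse. In your bookkeeping the constant $C$ is $(2m'-1)!!/N=K\prod_v d_v!$, which grows at least like $6^{n'}$ and is nowhere near $\Theta(1)$; the fact that the exhibited ratio $\prod_v 2^{\ell_v}\ell_v!\prod_{u<v} m_{uv}!/\prod_v d_v!$ is $\le 1$ therefore buys nothing. To salvage a configuration-model route one would have to bound quantities like $\mathbb{E}_{H^*}\bigl[\,(\#\text{loops})\cdot\prod_v 2^{\ell_v}\ell_v!\prod_{u<v}m_{uv}!\,\bigr]$, i.e.\ weighted moments of the multiplicity profile, which is a genuinely harder estimate than the plain first moments you compute --- on top of the maximum-degree issue in the slowly-growing-kernel regime that you flag yourself.

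The paper avoids all of this by staying inside $G\sim G^{\cM}(n,m)$ rather than passing through the kernel degree sequence. It observes that each loop of $\cK(G)$ and each pair of parallel kernel edges corresponds to an ``almost bare'' cycle of $G$ (a cycle through at most two vertices of degree larger than $2$), a $k$-fold kernel edge contributing $\binom{k}{2}$ such cycles. A single first-moment computation in $G^{\cM}(n,m)$ --- with no conditioning on the kernel degree sequence, no model comparison, and no maximum-degree input --- shows the expected number of almost bare cycles is $O(1)$, and Markov's inequality gives the first assertion; the multiplicity-$\geq 3$ and double-loop assertions follow from analogous $o(1)$ first moments counting, respectively, triples of internally disjoint degree-$2$ paths between a pair of vertices and pairs of almost bare cycles sharing a vertex.
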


At the proof of Theorem \ref{thm:mainmult}, instead of working with geometric random variables we work with $Exp_{\leq C}(1)$ random variables i.e. exponential random variables with mean 1 that are conditioned on being at most $C$ for some constant $C$, its probability density is given by the function $f(x)=e^{-x}/(1-e^{-C})$ for $x\in [0,C]$. To pass from the geometric to the exponential random variables we use the following lemma. Its proof is located at Appendix \ref{sec:app:lem:kernelexp1}
\begin{lemma}\label{lem:kernelexp1}
Let $0<\gamma=\gamma(n) < 0.01$ and $m'=m'(n)$ be such that $\lim_{n\to \infty} \g^{-1}(n)m'(n)=\infty$. Let $X_1,...,X_{m'}$ be independent $Geom(\gamma)$ random variables conditioned on their sum be equal to $\gamma^{-1}m'$. Let $C$ be a positive constant such that $e^{-C}\geq \g$ and $Y_1,Y_2,....,Y_{m'}$ be independent $Exp_{\leq C}(1)$ random variables. Then there exists a coupling between $(X_1,X_2,...,X_{m'})$ and $(Y_1,Y_2,...,Y_m')$ such that $\gamma^{-1}Y_i\leq  X_i$ for all $i\in  [m']$ w.h.p.
\end{lemma}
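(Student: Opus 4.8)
The plan is to remove the conditioning on the sum in two moves: a one–coordinate stochastic comparison that manufactures a fixed amount of slack, and a monotonicity property of uniform compositions that spends that slack to restore the exact sum. Set $N:=\gamma^{-1}m'$, which I may assume is an integer with $N\ge m'$ (it is the prescribed value of the sum). By the argument proving Lemma~\ref{lem:kernel}, the conditional law of $(X_1,\dots,X_{m'})$ does not depend on $\gamma$ and is the uniform law $\mathcal{U}_N$ on $\{x\in(\mathbb{N}^+)^{m'}:\sum_i x_i=N\}$; so it suffices to exhibit, on one probability space, independent $Y_i\sim Exp_{\le C}(1)$ and a vector $(X_i)\sim\mathcal{U}_N$ with $\gamma^{-1}Y_i\le X_i$ for every $i\in[m']$ w.h.p. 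I will also use that $m'=m'(n)\to\infty$, which is the case in the applications of the lemma.

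First I would prove a single–coordinate domination with slack. Fix a constant $\eta_0=\eta_0(C)>0$ slightly below $\tfrac{e^{-C}}{1-e^{-C}}$ — concretely $\eta_0:=\tfrac{e^{-C}}{2(1-e^{-C})}$ — and put $\gamma'':=(1+\eta_0)\gamma$, so $0<\gamma''<1$ for $n$ large. I claim that $\gamma^{-1}Y$ is stochastically dominated by a $Geom(\gamma'')$ variable for $Y\sim Exp_{\le C}(1)$. Indeed $\Pr(\gamma^{-1}Y>t)=\tfrac{e^{-\gamma t}-e^{-C}}{1-e^{-C}}$ for $t\in[0,C/\gamma]$ and $=0$ otherwise, while $\Pr(Geom(\gamma'')>t)=(1-\gamma'')^{\lfloor t\rfloor}\ge(1-\gamma'')^t$, so it is enough to check $(1-\gamma'')^t\ge\tfrac{e^{-\gamma t}-e^{-C}}{1-e^{-C}}$ for all $t\ge0$: both sides equal $1$ at $t=0$, and differentiating and using $e^{-\gamma t}\ge(1-\gamma'')^t$ one gets that the derivative of the difference is at least $(1-\gamma'')^t\bigl(\tfrac{\gamma}{1-e^{-C}}+\ln(1-\gamma'')\bigr)\ge0$ for $n$ large. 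This last inequality is the one calculation that uses the hypotheses $e^{-C}\ge\gamma$ and $\gamma<0.01$: it amounts to dominating the higher order terms of $-\ln(1-\gamma'')=(1+\eta_0)\gamma+O(\gamma^2)$ by the gain $\tfrac{\gamma}{1-e^{-C}}-\gamma=2\eta_0\gamma$. Since the comparison is coordinatewise and both vectors are product measures, there is a coupling of $(Y_i)$ with independent $Z_i\sim Geom(\gamma'')$ for which $\gamma^{-1}Y_i\le Z_i$ for all $i$, surely.

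Next I would absorb the sum constraint. Let $S:=\sum_{i\in[m']}Z_i$; then $\E S=m'/\gamma''=N/(1+\eta_0)$ and $\operatorname{Var}S\le m'/\gamma''^{\,2}$, so Chebyshev gives $\Pr(S>N)\le (m'\eta_0^2)^{-1}\to0$, i.e. $S\le N$ w.h.p. I then need that for integers $m'\le k<N$ the composition $\mathcal{U}_k$ is coordinatewise stochastically dominated by $\mathcal{U}_{k+1}$: given $(x_i)\sim\mathcal{U}_k$, pick $\ell\in[m']$ with $\Pr(\ell=j)=x_j/k$ and output $(x_i+\mathbbm{1}[i=\ell])_i$; a short binomial-coefficient computation shows the output has law $\mathcal{U}_{k+1}$, and the map is monotone. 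Composing these one-step couplings yields, for every $k\le N$, a monotone coupling of $\mathcal{U}_k$ with $\mathcal{U}_N$. Now, conditionally on $S=k$ the vector $(Z_i)$ has law $\mathcal{U}_k$; so, on the event $\{S\le N\}$ draw $(X_i)$ from the monotone coupling of $\mathcal{U}_S$ with $\mathcal{U}_N$ (so $Z_i\le X_i$ for all $i$), and on $\{S>N\}$ draw $(X_i)\sim\mathcal{U}_N$ afresh. Averaging over $S$ shows the marginal law of $(X_i)$ is exactly $\mathcal{U}_N$, the law of $(Y_i)$ is unchanged, and on $\{S\le N\}$ one has $\gamma^{-1}Y_i\le Z_i\le X_i$ for all $i$; since $\Pr(S\le N)=1-o(1)$, this is the required coupling.

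The main obstacle is conceptual rather than computational: the naive coupling that places $\gamma^{-1}Y_i$ below a $Geom(\gamma)$ weight cannot be reconciled with the sum constraint, because $\sum_i\gamma^{-1}Y_i$ only reaches a $(1-\Theta(1))$–fraction of $N$ in expectation while that coupling would pin the majorising vector's sum to $N$. Everything hinges on recovering this lost slack by majorising instead with the slightly sparser $Geom(\gamma'')$, so that its sum typically undershoots $N$, and then re-growing the composition up to total $N$ along the monotone chain $\mathcal{U}_{m'}\preceq\mathcal{U}_{m'+1}\preceq\cdots\preceq\mathcal{U}_N$, which preserves the coordinatewise bound; verifying that a constant slack $\eta_0$ survives the one-coordinate comparison is the delicate point.
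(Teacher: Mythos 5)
Your approach is genuinely different from the paper's, and in one respect cleaner: the paper discretizes each $Y_i$ directly into a $Geom(\gamma')$ variable (via an acceptance/reshuffle trick) and then compares $\Pr(\sum X_i' > N)$ to the atom $\Pr(\sum X_i = N)$, whereas you first manufacture slack by dominating $\gamma^{-1}Y_i$ with a \emph{sparser} geometric $Geom(\gamma'')$, and then absorb the sum constraint via the elegant one-step monotone coupling $\mathcal U_k\preceq\mathcal U_{k+1}$ of uniform compositions. That last step avoids the delicate ``local CLT vs.\ Chebyshev'' comparison the paper does, and your one-step coupling verification ($\binom{k}{m'-1}=\binom{k-1}{m'-1}\cdot\frac{k}{k-m'+1}$) and the Chebyshev bound $\Pr(S>N)\le(m'\eta_0^2)^{-1}$ both check out. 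You also rightly flag that $m'\to\infty$ is needed and only implicit in the lemma.

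However, the stochastic domination step as written has a genuine error. You argue $(1-\gamma'')^t\ge\frac{e^{-\gamma t}-e^{-C}}{1-e^{-C}}$ for all $t\ge0$ by showing the derivative of the difference is nonnegative, which requires $\frac{\gamma}{1-e^{-C}}+\ln(1-\gamma'')\ge0$. Expanding with $\gamma''=(1+\eta_0)\gamma$ and $\frac{1}{1-e^{-C}}=1+2\eta_0$ gives
\[
\frac{\gamma}{1-e^{-C}}+\ln(1-\gamma'')=\eta_0\gamma-\frac{(1+\eta_0)^2\gamma^2}{2}-\frac{(1+\eta_0)^3\gamma^3}{3}-\cdots,
\]
and in the extremal case $\gamma=e^{-C}$ (so $\eta_0=\frac{\gamma}{2(1-\gamma)}$) this equals $-\frac{\gamma^3}{3}+O(\gamma^4)<0$. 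So the continuous-$t$ inequality \emph{fails} near $t=0$ when $\gamma$ is close to $e^{-C}$, and invoking ``for $n$ large'' does not help because the lemma does not assume $\gamma(n)\to0$. Fortunately the underlying stochastic domination is still true: since $\Pr(Geom(\gamma'')>t)=(1-\gamma'')^{\lfloor t\rfloor}$ is constant on $[k,k+1)$ while $\Pr(\gamma^{-1}Y>t)$ is nonincreasing, it suffices to check $(1-\gamma'')^k\ge\frac{e^{-k\gamma}-e^{-C}}{1-e^{-C}}$ at integers $k\ge1$, where the extra factor gained by replacing $t$ by $\lfloor t\rfloor$ supplies exactly the missing $\Theta(\gamma^3)$ margin (e.g.\ at $k=1,2$ one computes the difference to be $\frac{\gamma^3}{6}+O(\gamma^4)$ and $\frac{4\gamma^3}{3}+O(\gamma^4)$ respectively). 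Alternatively, taking a more conservative $\eta_0$, say $\eta_0=\frac{e^{-C}}{4(1-e^{-C})}$, makes your continuous derivative argument go through verbatim and still gives $\Pr(S>N)\le(m'\eta_0^2)^{-1}=o(1)$. Either fix is local; the rest of your argument stands.
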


\section{The \gr  algorithm}\label{sec:2greedy}

\subsection{Random multigraphs with conditioned degree sequence}
Before stating the \gr algorithm we take some time to explain the random graph model that we will use to generate the graph at which we will apply \grs We use a variation on the pseudo-graph model of Bollob\'as and Frieze  \cite{bollobas1983matchings} and Chv\'atal \cite{chvatal1991almost} which we denote by $G_{n,m}(Y_1,Y_2,Y,Z_1,Z)$. Given $n,m \in \mathbb{N}$ and a sequence $\bx=(x_1,x_2,...,x_{2m})\in [n]^{2m}$ we define the multigraph $G_\bx$ by $V(G_\bx):=[n]$, $E(G_\bx):=\{\{x_{2j-1},x_{2j}\}:j\in [m]\}$. Thus $G_\bx$ is a graph on $n$ vertices with  $m$ edges. The degree of some vertex $v\in [n]$ with respect to the sequence $\bx$ is equal to the number of times it appears in $\bx$, i.e. $d_\bx(v)=|\{i: \bx_i=v, 1\leq i\leq 2m\}|$. Let $Y_1, Y_2,  Y,  Z_1$ and $Z$ be pairwise disjoint subsets of $[n]$. For $2m\geq |Y_1|+2|Y_2|+3|Y|+|Z_1|+2|Z|$ we let $\cS_{n,m}(Y_1,Y_2,Y,Z_1,Z)$ be the set of sequences $\bx \in [n]^{2m}$  such that 
\begin{align*}
    d_\bx(i)=
    \begin{cases}
    1&\text{ if }i\in Y_1\cup Z_1,
\\    2&\text{ if }i\in Y_2,
 \\   \geq 2&\text{ if } i\in Z,
 \\   \geq 3&\text{ if } i\in Y.
    \end{cases}
\end{align*}
We then define $G_{n,m}(Y_1,Y_2,Y,Z_1,Z)$ be the graph $G_\bx$ where $\bx$ is chosen uniformly at random from $\cS_{n,m}(Y_1,Y_2,Y,Z_1,Z)$. We also let $\cS_{n,m}^\cD(Y_1,Y_2,Y,Z_1,Z)$ be   set of degree sequences $\bd$ such that \begin{align*}
    \bd_i=
    \begin{cases}
    1&\text{ if }i\in Y_1\cup Z_1,
\\    2&\text{ if }i\in Y_2,
 \\   \geq 2&\text{ if } i\in Z,
 \\   \geq 3&\text{ if } i\in Y.
    \end{cases}
\end{align*}
Finally let $G_{n,m}^{\cM}(Y_1,Y_2,Y,Z_1,Z)$ be a multigraph chosen uniformly at random among the multigraphs with a degree sequence in $\cS_{n,m}^\cD(Y_1,Y_2,Y,Z_1,Z)$. Later on, when we will analyse \grc we will have to keep track of information like ``every vertex in $Z_2$ has been matched exactly once but still have degree at least $2$" or 
``every vertex in $Y_1$ has been matched exactly $0$ times so far and has degree $1$" e.t.c., hence the need of the $5$ distinct sets.

Observe that  $G_{n,m}^{\cM}(Y_1,Y_2,Y,Z_1,Z)$ assigns the uniform measure over multigraphs with a degree sequence in $\cS_{n,m}^\cD(Y_1,Y_2,Y,Z_1,Z)$ while $G_{n,m}(Y_1,Y_2,Y,Z_1,Z)$ does not. Indeed, if $G$ is such a multigraph with $l$ loops, $f$ edges of multiplicity $2$ and no edges of higher multiplicity then there are $(m!)2^m \cdot 2^{-f-l}$ sequences in  $\cS_{n,m}(Y_1,Y_2,Y,Z_1,Z)$ that correspond to the graph $G$. However as any multigraph may arise from at most $m!2^m$ sequences in $\cS_{n,m}(Y_1,Y_2,Y,Z_1,Z)$ the following lemma holds. 
\begin{lemma}\label{lem:transferdistribution}
Let $\cP$ be a graph property, $G\sim G_{n,m}(Y_1,Y_2,Y,Z_1,Z)$ and $G'\sim G_{n,m}^{\cM}(Y_1,Y_2,Y,Z_1,Z)$. Let $f(n)$ be any function such that $\lim_{n\to \infty} f(n) = \infty$. If $\Pr(G\text{ is connected})=1-o(1)$ and $\Pr(G\in \cP)=1-o(1)$ then $\Pr(G'\in \cP|\cE )=1-o(1)$ where $\cE$ is the event that $G'$ does not have a multiple edge with multiplicity larger than $3$, has at most $f(n)$ loops and multiple edges and it is connected.   
\end{lemma}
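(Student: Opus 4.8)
The plan is to compare the distributions of $G$ and $G'$ by passing through the common ``sequence model'' $\cS_{n,m}(Y_1,Y_2,Y,Z_1,Z)$, and to exploit the fact that the Radon--Nikodym derivative between the two induced measures on multigraphs is bounded once we restrict to graphs with few loops and multiple edges. First I would fix an arbitrary degree sequence $\bd\in \cS_{n,m}^\cD(Y_1,Y_2,Y,Z_1,Z)$ and note that both $G$ and $G'$, conditioned on having degree sequence $\bd$, live on the same finite set of multigraphs; the point is only that $G'$ weights each such multigraph equally, whereas $G$ weights a multigraph $H$ with $l$ loops and $f$ double edges (and no higher multiplicity) proportionally to $(m!)2^m 2^{-l-f}$, and weights any multigraph with an edge of multiplicity $\geq 3$ by an amount that is at most $m!2^m$ but could be as small as $m!2^m 2^{-(m-1)}$. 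So within the class of multigraphs with no edge of multiplicity $>2$ and at most $f(n)$ loops/multiple edges — call this class $\cE_0$ — the ratio of the $G$-weight to the (normalized) $G'$-weight of any individual multigraph lies between $2^{-f(n)}$ and $1$, i.e. the two measures restricted to $\cE_0$ are within a factor $2^{f(n)}$ of each other, vertex-by-vertex, after renormalization.

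The second step is to show that both models put almost all their mass on $\cE_0$. For $G'$ this is essentially the content of Lemma \ref{lem:loops} together with standard configuration-model estimates: with the relevant degree sequences (bounded average degree), the expected number of loops and multiple edges is $O(1)$, so w.h.p.\ $G'$ has at most $f(n)$ of them and none of multiplicity $\geq 3$; hence $\Pr(G'\in\cE_0)=1-o(1)$, and $\Pr(G'\in\cE_0\mid\cE)=1-o(1)$ as well since $\cE\supseteq\{G'\in\cE_0,\ G'\text{ connected}\}$ and $\Pr(G'\text{ connected}\mid \cdots)$ is handled below. For $G$, the hypothesis $\Pr(G\text{ is connected})=1-o(1)$ is given, and a similar loop/multiple-edge count (now in the sequence model, where the calculation is even more transparent: the chance a given pair of slots creates a loop or repeated edge is $O(1/n)$) gives $\Pr(G\in\cE_0)=1-o(1)$, whence $\Pr(G\in \cP\cap\cE_0\cap\{\text{connected}\})=1-o(1)$.

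The third step assembles these. Write $A=\cP\cap\cE_0\cap\{\text{connected}\}$, so $\Pr(G\in A)=1-o(1)$ by Step 2 and the hypotheses. Since $A\subseteq\cE_0$ and the $G$- and $G'$-measures on $\cE_0$ agree up to the factor $2^{f(n)}$, we would get $\Pr(G'\in A)\geq 2^{-f(n)}\,\Pr(G\in A)\cdot(1-o(1))$ — but a bare $2^{-f(n)}$ is not $1-o(1)$, so the Radon--Nikodym bound must be applied in the reverse direction: bound $\Pr(G'\in \cE_0\setminus A)$ by $2^{f(n)}\Pr(G\in\cE_0\setminus A)$, which is $2^{f(n)}\cdot o(1)$. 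This is the crux, and it forces us to be slightly careful: one should first choose $f(n)\to\infty$ and then run Lemma \ref{lem:loops} with a \emph{slower}-growing threshold so that the $o(1)$ terms from Step 2 beat $2^{f(n)}$; concretely, by Lemma \ref{lem:loops} there is some $g(n)\to\infty$ with $g(n)\leq \log_2(1/\sqrt{o(1)})$-type growth such that $\Pr(G\notin\cE_0^{(g)})=o(2^{-f(n)})$, where $\cE_0^{(g)}$ uses threshold $g(n)\leq f(n)$; since $\cE_0^{(g)}\subseteq\cE_0$ this suffices. Then $\Pr(G'\in\cE_0\setminus A)\leq 2^{f(n)}\Pr(G\in\cE_0\setminus A)=o(1)$, and since $\cE=\{G'\in\cE_0,\ G'\text{ connected}\}$ up to the multiplicity-$3$ caveat already inside $\cE_0$, we conclude $\Pr(G'\notin A\mid\cE)\leq \Pr(G'\in\cE_0\setminus A)/\Pr(\cE)=o(1)/(1-o(1))=o(1)$, i.e.\ $\Pr(G'\in\cP\mid\cE)=1-o(1)$.

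The main obstacle is exactly the tension in Step 3 between the $2^{f(n)}$ blow-up of the Radon--Nikodym factor and the need for the final probability to be $1-o(1)$: the lemma is only useful because $f(n)$ may be taken to grow arbitrarily slowly, and the proof must interleave the choice of $f$ with the rate of decay supplied by Lemma \ref{lem:loops}. A secondary point to verify carefully is that conditioning on $\cE$ (in particular on connectivity of $G'$) does not distort things — this is fine because $\Pr(\cE)=1-o(1)$, which itself follows from $\Pr(G'\in\cE_0)=1-o(1)$ and a connectivity estimate for the configuration model with the given degree sequence, or alternatively by transferring connectivity from $G$ via the same Radon--Nikodym comparison applied to the (decreasing) event of being disconnected.
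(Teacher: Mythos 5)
Your approach---comparing $G$ and $G'$ through the sequence model and bounding the ratio of the two measures by $O(6^{k})$ on multigraphs with $k$ loops and multiple edges of multiplicity at most $3$---is the paper's own; the paper's entire justification is the two-sentence remark preceding the lemma recording the sequence counts, and you correctly identify the $6^{f(n)}$ blow-up (you write $2^{f(n)}$ because your $\cE_0$ forbids multiplicity $3$, whereas $\cE$ allows it) as the crux. However, Step 3 has a genuine gap. The display $\Pr(G'\in\cE_0\setminus A)\le 2^{f(n)}\Pr(G\in\cE_0\setminus A)=o(1)$ implicitly needs $\Pr(G\in\cE_0\setminus A)=o(2^{-f(n)})$, but $\cE_0\setminus A$ contains $\{G\notin\cP\}\cap\cE_0$, whose $G$-probability is $\Pr(G\notin\cP)-o(1)$, a rate fixed by the hypotheses that cannot be driven below $2^{-f(n)}$ for an arbitrary $f\to\infty$. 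Shrinking the threshold to $g\le f$ does not repair this: the offending term $\Pr(G\notin\cP)$ is unaffected by the threshold, and your two constraints on $g$---$2^{g}\Pr(G\notin\cP)=o(1)$, which forces $g$ to grow slowly, and $\Pr(G\notin\cE_0^{(g)})=o(2^{-f(n)})$, which forces $g$ to grow roughly as fast as $f$---are incompatible when $f$ grows quickly.

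The argument that works applies the Radon--Nikodym bound only over $\cE_0^{(g)}$ for $g$ slow enough that $6^{g}\bigl(\Pr(G\notin\cP)+\Pr(G\text{ not connected})\bigr)=o(1)$, and then handles the leftover $\Pr\bigl(G'\in\cE\setminus\cE_0^{(g)}\bigr)=o(1)$ as a separate estimate \emph{about $G'$}. Here Step 2 is also too quick: ``standard configuration-model estimates'' describe $G$ (the configuration model is uniform on sequences), not $G'$ (uniform on multigraphs), and these two measures assign quite different mass to multigraphs with many loops and multi-edges. A clean way to obtain the needed $G'$-tail is a multiplicity-stratified transfer: $\Pr\bigl(G'\text{ has exactly }k\text{ loops/multi-edges, no multiplicity}>3\bigr)\le 6^{k}\Pr\bigl(G\text{ has exactly }k\bigr)$, and since the latter has mean $O(1)$ with a Poisson-type super-exponential tail, $\sum_{k>g}6^{k}\Pr(G\text{ has exactly }k)\to 0$ for any $g\to\infty$, beating the $6^{k}$ blow-up. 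This step is what lets the lemma accommodate an arbitrary $f(n)\to\infty$, and it is missing from both your write-up and the paper's terse justification.
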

Recall that Lemma \ref{lem:loops} states that w.h.p. $\cK(G)$ satisfies the event $\cE$ thus the above lemma is good enough for us to work with. 
We finish this subsection by stating a lemma about the maximum degree of $G_{n,m}(Y_1,Y_2,Y,Z_1,Z)$. Its proof is located at Appendix \ref{sec:app:config}. There, we also further discuss the distribution of the degree sequence of $G\sim G_{n,m}(Y_1,Y_2,Y,Z_1,Z)$. 
\begin{lemma}\label{lem:maxdegree}
Let $Y_1, Y_2,  Y, Z_1, Z$ be pairwise disjoint subsets of $[n]$ and $m$ be such that  $|Y_1|+2|Y_2|+3|Y|+|Z_1|+2|Z|\leq 2m =O(n)$. Let $G\sim G_{n,2m}(Y_1,Y_2,Y,Z_1,Z)$. Then w.h.p. $\Delta(G)\leq \log n$. Thereafter if $Y=[n]$ then $G$ is connected w.h.p. 
\end{lemma}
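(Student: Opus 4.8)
The plan is to treat the two assertions separately. For the maximum‑degree bound, I would work directly with the uniform random sequence $\bx$ drawn from $\cS_{n,2m}(Y_1,Y_2,Y,Z_1,Z)$, and estimate, for a fixed vertex $v$ and an integer $k$, the probability that $d_\bx(v)\geq k$. The clean way to do this is to compare $\cS_{n,2m}(Y_1,Y_2,Y,Z_1,Z)$ with the unconstrained set $[n]^{2m}$: dropping all the lower‑bound degree constraints only inflates the sample space, and one checks that the resulting ratio of cardinalities is bounded by a constant to the power $O(n)$ (this is where the hypothesis $2m=O(n)$ and $|Y_1|+2|Y_2|+3|Y|+|Z_1|+2|Z|\le 2m$ enters — the number of sequences realizing a feasible degree sequence is comparable up to such factors). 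In the unconstrained model each coordinate is i.i.d.\ uniform on $[n]$, so $d_\bx(v)\sim \mathrm{Bin}(2m,1/n)$ with mean $2m/n=O(1)$, and $\Pr(d_\bx(v)\ge k)\le \binom{2m}{k}n^{-k}\le (2em/(kn))^k$. Taking $k=\lceil\log n\rceil$ makes this $n^{-\omega(1)}$, and a union bound over $v\in[n]$ gives $\Pr(\Delta(G)\ge \log n)=o(1)$, as required. (One must be slightly careful that loops and multiple edges in $G_\bx$ do not change the degree counts — they don't, since $d_\bx(v)$ counts appearances in $\bx$ with multiplicity, which is exactly the multigraph degree.)

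The main obstacle in this first part is making the comparison between the constrained and unconstrained sequence spaces quantitatively honest while keeping the tail estimate strong enough to survive the union bound; the $\log n$ threshold is generous enough that even a crude bound $|\,[n]^{2m}\,|/|\cS_{n,2m}(\dots)|\le e^{O(n)}$ suffices, since the per‑vertex tail probability decays super‑polynomially. If that comparison turns out to be awkward to state cleanly, the fallback is the switching method or a direct second‑moment / Azuma argument on the sequence $\bx$ viewed as $2m$ exchangeable coordinates, but I expect the sample‑space comparison to be the shortest route given the material already set up in Appendix \ref{sec:app:config}.

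For the second assertion, assume $Y=[n]$, so every vertex has degree at least $3$; I would prove connectivity by the standard "no small separating set" argument. Condition on the degree sequence $\bd$ (its distribution is discussed in Appendix \ref{sec:app:config}); by the first part we may assume $\Delta(G)\le\log n$ and, since $\sum_i\bd_i=2\cdot 2m=O(n)$, the degrees are bounded in an averaged sense. Now expose $\bx$ and bound, for each $t$ with $1\le t\le n/2$, the probability that there is a vertex subset $S$ with $|S|=t$ and no edge leaving $S$; using the configuration‑type description of $G_{n,2m}$, the probability that all half‑edges incident to $S$ pair within $S$ is at most $\big(\tfrac{2mt/n}{2m-2mt/n}\big)^{\Theta(t)}$ up to lower‑order corrections, and summing $\binom{n}{t}$ times this over $t$ shows (exactly as in the classical proof of connectivity of random graphs with minimum degree $\ge 3$) that the expected number of such components is $o(1)$; the minimum degree $3$ is precisely what kills the $t=O(1)$ range, and $\Delta\le\log n$ controls the mid‑range. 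Hence $G$ is connected w.h.p. The one technical point here is that $G_{n,2m}$ is \emph{not} the uniform configuration model — but the correspondence described before Lemma \ref{lem:transferdistribution} (each multigraph arises from between $2^{-f-l}m!2^m$ and $m!2^m$ sequences) means the two models agree up to a bounded‑density reweighting on the relevant events, so the configuration‑model connectivity bound transfers; invoking Lemma \ref{lem:transferdistribution} or repeating its short counting argument closes the gap.
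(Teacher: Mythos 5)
Your maximum‑degree argument has a quantitative gap that is fatal as stated. When $|Y|=\Theta(n)$ (which holds in every application, and in particular when $Y=[n]$), passing from $\cS_{n,2m}(Y_1,Y_2,Y,Z_1,Z)$ to the unconstrained product space $[n]^{2m}$ requires removing $\Theta(n)$ constraints of the form ``$d_\bx(v)\ge 3$,'' each of which has probability bounded away from $1$ under the i.i.d.\ model, so the ratio of cardinalities really is $e^{\Theta(n)}$ — you acknowledge this. But the per‑vertex tail bound you then compute, $(2em/(kn))^k$ with $k=\lceil\log n\rceil$ and $m=O(n)$, is only $e^{-\Theta(\log n\cdot\log\log n)}$: super‑polynomially small in $n$, yet nowhere near $e^{-\Omega(n)}$. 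The product is therefore enormous, not $o(1)$, and no union bound can rescue it; a bound that must absorb an $e^{\Theta(n)}$ factor needs an $e^{-\omega(n)}$ tail, which the threshold $\log n$ does not come close to delivering. The paper's proof (Appendix~\ref{sec:app:config}) avoids this by choosing a baseline model in which the lower‑bound constraints cost nothing: it generates independent degrees $d(v)\sim Po_{\ge 3}(\lambda)$ for $v\in Y$ and $d(v)\sim Po_{\ge 2}(\lambda)$ for $v\in Z$, with $\lambda=O(1)$ tuned so that $\E\sum_v d(v)=2m$, and then conditions only on the single event $\sum_v d(v)=2m$. A local central limit theorem gives this event probability $\Omega(n^{-1/2})$, so the conditioning costs only a polynomial factor, comfortably absorbed by the Poisson tail $\Pr(Po_{\ge 3}(\lambda)\ge\log n)=O\big(\lambda^{\log n}/(\log n)!\big)=n^{-\omega(1)}$ before the union bound over $v\in[n]$.

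Your connectivity argument is essentially the paper's: condition on the degree sequence and on $\Delta(G)\le\log n$, then run the standard configuration‑model first‑moment count of vertex subsets with no outgoing half‑edge, with minimum degree $3$ killing small sets and the maximum‑degree bound controlling the intermediate range. One correction, though: there is no gap to close between $G_{n,2m}(\dots)$ and the configuration model. Conditioned on the degree sequence, the uniform sequence $\bx$ from $\cS_{n,2m}(\dots)$ \emph{is} the configuration model — a uniform sequence with prescribed vertex‑multiplicities corresponds exactly to a uniform pairing of labelled half‑edges, so the induced distribution on multigraphs is identical. The $2^{-f-l}$ discrepancy you cite is between the sequence model $G_{n,m}$ and the uniform‑multigraph model $G^{\cM}_{n,m}$; that is a different object and not the one this lemma concerns, so no reweighting or appeal to Lemma~\ref{lem:transferdistribution} is needed here.
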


\subsection{The description of \gr}

\gr will be applied to a (multi)-graph $G$ whose distribution is close  to $G_{n,2m}(\emptyset,\emptyset,[n],\emptyset,\emptyset)$. As the algorithm progresses, it makes changes to $G$ and generates a graph sequence $G_0=G \supset G_1 \supset ...\supset G_{\tau}=\emptyset$. In parallel, it grows a $2$-matching $M$. We let $M_0=\emptyset, M_1,M_2,..., M_{\tau}=M$ be the sequence of $2$-matchings that are generated. For $0\leq t\leq \tau$ by \emph{``at time/step $t$"}   we refer to the $t^{th}$ iteration of the while-loop at line 3 of the description of \grc we also define the following quantities and sets.
\begin{itemize}
\item $G_t$, the graph at the beginning of step $t$, 
\item $m_t:=|E(G_t)|$,
\item $d_t(v):=d_{G_t}(v)$, for $v\in V$,
\item $d_{M_t}(v):=|\set{ e \in M_t:v \in e} |+|\{\text{loops in $M_t$ appended at $v$}\}|$, for $v\in V$,
\item $Y_{j}^t:=\set{v \in [n]: d_{M_t}=0,\,d_{t}(v)=j}$, $j\geq 1$, i.e. the set of vertices of degree $j$ in $G_t$ that are incident to $0$ edges in $M_t$,
\item $Z_{j}^t:=\set{v \in [n]: d_{M_t}=1,\,d_{t}(v)=j}$, $j\geq 1$, i.e. the set of vertices of degree $j$ in $G_t$ that are incident to $1$ edge in $M_t$,
\item $Y^t:=\cup_{j\geq 3} Y_{j}^t$ and $Z^t:=\cup_{j\geq 2} Z_{j}^t$,
\item $D^t=Z_1\cup Y_1 \cup Y_2$, the set of ``dangerous" vertices,
\item $\zeta_t = |Z_1|+|Y_1|+2|Y_2|$.
\end{itemize}
\noindent
Observe that if $v\in G_t$ satisfies $v\in  D^t$  then there are $k\in \{0,1\}$ edges incident to $v$ in $M_t$ and at most $2-k$ edges incident  to $v$ in $G_t$. Thus in the final $2$-matching $M$, $v$ can be incident to at most $2$ edges. In addition, if in the future an edge incident to $v$ in $G_t$ is removed but not added to the matching then  $v$ will be incident to at most $1$ edge in $M$. Therefore $D^t$ consists of {\emph dangerous vertices} whose edges we would like to add to the matching as soon as possible in order to avoid ``accidentally" deleting them. $\zeta_t$ bounds the number of those edges. \gr tries to match the vertices in $D^t$ first.  If $D^t$ is empty then \gr adds to $M_t$ an edge of maximum weight.
\begin{algorithm}[H]
\caption{{\gr}}
\begin{algorithmic}[1]
\\ Input: a graph $G_1=G$ and a function $w:E(G)\mapsto[0,\infty)$.
\\ $t = 0$, $M_0=\emptyset$.
\While{$E(G_t)\neq \emptyset$}
{
\If{$D^t \neq \emptyset$}
\\\hspace{10mm} Choose a  vertex $v_t \in D^t$  uniformly at random.
\\\hspace{10mm} Choose an edge $e_t=v_tu_t$ in $G_t$ incident to $v_t$ uniformly at random. 
\Else
\\\hspace{10mm} Choose an edge $e_t=v_tu_t$ in $G_t$ among those that maximise $w(e)$ uniformly at random. 
\EndIf 
\\\hspace{5mm} Set $M_{t+1} = M_t \cup\{\{v_t,w_t\}\}$
\\\hspace{5mm} Delete the edge $e_t$ from $G_t$.
\\\hspace{5mm} For $z\in \{v_t,u_t\}$ if $z$ is incident to at least 2 edges or to a loop in $M_{t+1}$ then delete all the edges incident to $z$ in $G_t$.
\\\hspace{5mm} Delete all the isolated vertices in $G_{t}$ and let $G_{t+1}$ be the resultant graph.
\\\hspace{5mm} Set $t = t+1$. }
\EndWhile
\\ Set  $\tau = t$.
\end{algorithmic}
Remove any loops and multiple edges from $M_\tau$.
\\Output $M_\tau$.
\end{algorithm}

We let $e_t=v_tu_t$ and $R_t$ be the set of edges deleted at step $t$. We also denote by $\cH_t$ the information consisting of the actions taken by \gr by time $t$ and the sets  $Y_1^i,Y_2^i,Y^i,Z_1^i,Z^i$, $i\leq t$. Finally we set $M_i=M_\tau$ for $i\geq \tau$. The performance of $\gr$ in settings of interest is given by the following 2 lemmas. 
\begin{lemma}\label{lem:performance}
Let  $Y_0\sqcup Y_1\sqcup Y_2 \sqcup Y$ be a partition of $[n]$ such that $|Y_2|=\Theta(n^{0.95})$, $|Y_0\cup Y_1|=O(n^{0.9})$ and $2m \geq |Y_1|+2|Y_2|+ 3|Y|$. Let $G\sim G_{n,m}(Y_1,Y_2,Y,\emptyset,\emptyset)$ and $V_2$ be the set of neighbors of $Y_2$ in $G$. To each edge $e\in E(G)$ assign a weight $w(e)\sim Exp_{\leq 20}(1)$. Then execute \gr with input $G,w$ and let $M_i,i\geq 0$ be the generated $2$-matchings, $M=M_\tau$ the output matching and $\tau'=(1-\log^{-1}n)n$. There exists $0<\epsilon_1<0.01$ such that if $ m \leq (1.5+\epsilon_1^3)n$ then w.h.p. the following hold.
\begin{itemize}
    \item[(a)] $\sum_{e\in M_\tau} w(e)\geq (\alpha+\beta)n$ where $\alpha>1.186$ is derived from a system of differential equations (see equations \eqref{eq:Y}-\eqref{eq:Wt} and \eqref{eq:estimate}) and $\beta>0 $ is a sufficiently small constant that is independent of $\alpha$.
    \item[(b)] $M_\tau$ has size $n-O(n^{0.9})$ and spans $O(n^{0.9})$ components.
    \item[(c)] $|V_2\cap V(G_t)| \geq 0.5(1-t/n)^{10}|V_2|$ for $t\leq \tau'$.
    \item[(d)] Every path spanned by $M_{\tau}$ of length at least $n^{0.06}$ with more than $n^{0.06}/2$ edges in $M_{\tau'}$ spans a vertex in $Y_2$.
\end{itemize}
\end{lemma}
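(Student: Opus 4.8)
The plan is to analyze the trajectory of \gr via the differential equations method of Wormald \cite{wormald1999differential}, tracking a small number of state variables through the execution. The natural variables are, at time $t$, the counts $|Y_1^t|,|Y_2^t|,|Y^t|,|Z_1^t|,|Z^t|$ together with the edge count $m_t$ and the accumulated weight $W_t=\sum_{e\in M_t}w(e)$; one also needs to track the degree sequence of $G_t$ in enough detail (e.g. via the variables governing the sum $\sum_j j|Y_j^t|$ and $\sum_j j|Z_j^t|$, or equivalently a generating-function parameter as in the pseudo-graph model discussion of Appendix \ref{sec:app:config}) so that the one-step changes can be computed. The key structural input is that, conditioned on $\cH_t$, the graph $G_t$ is distributed as $G_{n,m_t}(Y_1^t,Y_2^t,Y^t,Z_1^t,Z^t)$ (a fact that should follow from the uniformity built into the model and will be established alongside the algorithm's description); this lets us compute, for each of the two branches (``$D^t\neq\emptyset$'' versus the greedy-weight branch), the conditional expected one-step increments of all state variables. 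In the dangerous branch the endpoint $v_t$ is uniform in $D^t$ and the edge is uniform among its $\le 2$ incident edges, so the increments are explicit; in the greedy branch, since the weights are $Exp_{\le 20}(1)$ i.i.d., the maximum-weight edge among $m_t$ edges has weight concentrated near $\min\{20,\log m_t\}$-ish — more precisely we only need that its expected contribution is at least roughly $\log m_t - \log\log m_t$ minus lower order, and crucially that the \emph{endpoints} of this edge are essentially a uniformly random pair from the degree-weighted distribution (the weight is independent of the pairing), so the non-weight increments are the same as for a random edge. Feeding these increments into Wormald's theorem gives, on the interval where all variables stay $\Theta(n)$ (equivalently $t\le\tau'$ and a bit beyond the relevant window), dynamic concentration of each variable around the solution of the associated autonomous ODE system \eqref{eq:Y}--\eqref{eq:Wt}; the constant $\alpha$ in part (a) is then defined by \eqref{eq:estimate} as (essentially) $W$ evaluated at the terminal time of this ODE, and $\alpha>1.186$ is a numerical fact about that ODE.

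From the concentrated trajectory, parts (a)--(c) follow fairly directly. For (a): the ODE analysis yields $W_{\tau^*}\ge(\alpha+2\beta)n$ at the time $\tau^*$ when the main phase of the algorithm ends (the $\beta$ slack absorbing the $o(n)$ concentration error and the gap between $\tau^*$ and $\tau$), and the post-processing steps (finishing off the $O(n^{0.9})$ leftover dangerous vertices, and deleting loops/multiedges from $M_\tau$, of which Lemma \ref{lem:loops}-type bounds guarantee there are few) change the weight by only $o(n)$ since each such operation removes $O(1)$ edges of weight $O(1)$; hence $\sum_{e\in M_\tau}w(e)\ge(\alpha+\beta)n$ w.h.p. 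For (b): \gr never leaves a vertex unmatched except possibly some in $D^t$ that get ``accidentally'' deleted, and the number of such accidents is controlled by $\zeta_t$, which the ODE/concentration analysis shows stays $o(n)$ throughout and in fact only $O(n^{0.9})$ by the terminal time given the smallness of $|Y_0\cup Y_1|$ and $|Y_2|=\Theta(n^{0.95})$; each matched vertex of degree $2$ in $M_\tau$ is an interior vertex of a path/cycle, and the number of path-endpoints (hence of components) is $2\times$(number of size-$\le 1$ components) $+\;$ (endpoints) $=O(n^{0.9})$. For (c): $V_2$, the neighborhood of $Y_2$, has size $\Theta(n^{0.95})$, and since in each step at most $O(1)$ vertices are removed and the probability a given surviving vertex of $V_2$ is among them at step $t$ is $O(1/(n-t))$ (uniformly, because removals hit a roughly degree-proportional random set), the survival count dominates a supermartingale that stays above $0.5(1-t/n)^{10}|V_2|$; the exponent $10$ is a crude bound on the expected number of incident-edge deletions per step times a constant, and a union bound / Azuma argument over $t\le\tau'$ gives the w.h.p. statement.

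Part (d) is the one requiring a separate argument and is where I expect the main difficulty. The point is that a path $P$ in $M_\tau$ that is long (length $\ge n^{0.06}$) and was \emph{mostly already present} in $M_{\tau'}$ — i.e. built during the main phase rather than the cleanup — should, with very high probability, touch $Y_2$. The intuition: during the main phase, long paths of $M$ are grown by repeatedly appending greedy-weight edges and dangerous-vertex edges, and the set $V_2=N(Y_2)$ together with $Y_2$ is ``spread out'' and of polynomial size, so a path that manages to avoid $Y_2$ entirely for $\ge n^{0.06}$ steps is avoiding a target that, by part (c), retains $\Omega(n^{0.95})$ vertices for most of the process; the probability that $\Omega(n^{0.06})$ successive matching-edges all avoid a fixed $n^{0.95}$-sized vertex set behaves like $(1-n^{-0.05})^{\Omega(n^{0.06})}\le\exp(-\Omega(n^{0.01}))$, which beats the $e^{O(n^{0.06}\log n)}$ union bound over choices of path. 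Making this rigorous requires care: one must (i) argue that each ``extension'' step of a growing path hits $Y_2\cup V_2$ with probability $\Omega(n^{-0.05})$ essentially independently across the $\ge n^{0.06}/2$ steps in $M_{\tau'}$ (using that an endpoint of $e_t$ is close to degree-weighted-uniform and $Y_2$-vertices have degree $2$ so are hit with probability $\Omega(|Y_2|/n) = \Omega(n^{-0.05})$, while their $V_2$-neighbors are hit at a comparable rate), and (ii) handle the dependence between the path-growth process and the conditioning $\cH_t$, which is the genuinely technical part — one route is to expose the edges along the path one at a time, at each step conditioning only on the portion of $\cH_t$ already revealed and bounding the conditional probability of hitting $Y_2$ from below, so that the $\ge n^{0.06}/2$ indicators stochastically dominate independent $\mathrm{Bernoulli}(\Omega(n^{-0.05}))$ variables; then a Chernoff bound plus a union bound over the at most $n\cdot(\log n)^{n^{0.06}}$ candidate paths (using $\Delta(G)\le\log n$ from Lemma \ref{lem:maxdegree} to bound the number of paths of a given length through a given vertex) closes the argument.
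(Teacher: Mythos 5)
Your high-level plan (ODE method for (a), accident counting for (b), a supermartingale for (c), a hitting-probability argument for (d)) is the right shape, and your sketches of (b) and (c) are essentially consistent with the paper's proofs. However, there are two genuine gaps, in parts (a) and (d).

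\textbf{Part (a).} You write that in the greedy branch the maximum-weight edge ``has weight concentrated near $\min\{20,\log m_t\}$-ish,'' with expected contribution roughly $\log m_t-\log\log m_t$. This is wrong on two counts. First, the weights are truncated at $20$, so early in the process the maximum is $20-O(1/m_t)$, not $\log m_t$. Second and more importantly, the relevant quantity is not the maximum of a fresh sample of size $m_t$: the greedy steps deliberately consume the top order statistics, so $wmax_t$ declines much faster than any static-sample formula predicts. The paper tracks $wmax_t$ as a fourth ODE variable (equation \eqref{eq:max}) precisely for this reason, and your plan does not include it. Separately, you do not address the central technical obstacle that the one-step drifts contain the indicator $\mathbb{I}(\zeta_t>0)$, which oscillates too fast to be fed into Wormald's theorem directly. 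The paper's workaround (Lemmas \ref{lem:onestep}--\ref{lem:trend}) is to pass to $\log^2 n$-step increments $\bY_i,\bm_i,\bwmax_i,\bW_i$ and show via \eqref{eq:changezt} that over each window the fraction of steps with $\zeta_t>0$ concentrates around $(2-2p_t^2)/(2-p_t^2)$. Without this averaging device, or some equivalent one, the Trend Hypothesis cannot be verified, so the ODE for $W$ cannot be set up as you describe.

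\textbf{Part (d).} The union bound you propose does not close. You bound the per-path failure probability by $\exp(-\Omega(n^{0.01}))$ (from $\ge n^{0.06}/2$ Bernoulli trials with success probability $\Omega(n^{-0.05})$) and then union over ``$n\cdot(\log n)^{n^{0.06}}$ candidate paths.'' But $(\log n)^{n^{0.06}}=\exp(n^{0.06}\log\log n)$, and $n^{0.06}\log\log n\gg n^{0.01}$, so the product tends to infinity; your sentence ``$\exp(-\Omega(n^{0.01}))$ ... beats the $e^{O(n^{0.06}\log n)}$ union bound'' is arithmetically false. Even the slight boost to the hitting rate available from part (c) (an extra $\log^{10}n$) only improves the exponent to $n^{0.01}\log^{10}n$, which is still dominated by $n^{0.06}\log\log n$. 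The paper sidesteps this by not union-bounding over potential paths at all: since $M_\tau$ is a $2$-matching, it induces at most $n$ paths, so $|\cP|\le n$ deterministically. The argument then conditions sequentially on the algorithm's history, using the indices at which the interior vertices of a given $P\in\cP$ were matched and bounding $\Pr(u_t\notin V_2'\mid\cdot)$ at each such index, yielding a bound of the form $n\cdot\exp(-\Omega(n^{0.01}\log^{10}n))=o(1)$. Replacing your blanket union bound over all degree-$\le\log n$ walks by this ``only paths realized in $M_\tau$'' accounting is not a cosmetic change; it is the idea that makes (d) provable.
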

Parts (a) and (d) are proven at subsections \ref{subsec:(a)} and \ref{subsec:(c)} respectively. The proof of parts (b) and (c) are located at Appendix \ref{sec:app:auxlemmas}. The proof of the following lemma is identical to the proof of part (b) of Lemma \ref{lem:performance}, hence it is omitted.  
\begin{lemma}\label{lem:performance2}
Let $\epsilon,\gamma>0$  $m \geq 1.5 n$ and  $G\sim G_{n,m}(\emptyset, \emptyset, [n],\emptyset,\emptyset)$. To each edge $e\in E(G)$ assign a weight $w(e)=1$. Then execute \gr with input $G,w$ and let $M$ be the output matching. Then w.h.p. $M$ has size $n-O(n^{0.9})$ and spans $O(n^{0.9})$ components.
\end{lemma}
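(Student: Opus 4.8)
\textbf{Proof proposal for Lemma \ref{lem:performance2}.}

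The plan is to mirror the proof of part (b) of Lemma \ref{lem:performance}, which controls the size and component structure of the $2$-matching $M$ produced by \gr when run on $G_{n,m}(\emptyset,\emptyset,[n],\emptyset,\emptyset)$. Since every edge now carries the constant weight $w(e)=1$, the weight-maximization step in the \texttt{Else} branch degenerates into picking a uniformly random edge of $G_t$, so the algorithm is simply the unweighted \gr process; none of the weight bookkeeping from Lemma \ref{lem:performance}(a) is needed, and what survives is exactly the combinatorial skeleton underlying part (b). First I would observe that at $m\geq 1.5n$ the graph $G\sim G_{n,m}(\emptyset,\emptyset,[n],\emptyset,\emptyset)$ has minimum degree $3$ and, by Lemma \ref{lem:maxdegree}, maximum degree at most $\log n$ and is connected w.h.p., so we may run the whole argument on a bounded-degree instance.

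The core of the argument is the standard ``dangerous vertex'' tracking used throughout this line of work. I would track $\zeta_t=|Z_1^t|+|Y_1^t|+2|Y_2^t|$, the potential counting (with multiplicity) the number of edges incident to currently dangerous vertices. The key claims are: (i) whenever $D^t\neq\emptyset$ the step is a ``Karp--Sipser type'' step whose expected effect on $\zeta_t$ is a decrease, because matching a dangerous vertex removes it from $D^t$ and, in the bounded-degree random-multigraph model, creates in expectation fewer than one new dangerous vertex (this is exactly where the pseudo-graph/configuration description of $G_{n,m_t}(Y_1^t,Y_2^t,Y^t,Z_1^t,Z^t)$ and the discussion in Appendix \ref{sec:app:config} is invoked — conditional on $\cH_t$ the graph is a uniform such multigraph, so the endpoints of deleted edges are close to uniformly distributed among the half-edges); and (ii) in an \texttt{Else} step, $\zeta_t$ can increase by only $O(1)$ in expectation since only the two endpoints $v_t,u_t$ of the single chosen edge and their neighbours can become newly dangerous, and degrees are $O(\log n)$. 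Combining (i) and (ii) with a supermartingale/Azuma argument (Wormald's differential-equation method is overkill here, but a Freedman-type inequality for the bounded-increment process $\zeta_t$ suffices) shows that $\zeta_t$ stays $o(n)$ — in fact $O(n^{0.9})$ — throughout, so \gr almost never ``accidentally'' strands a dangerous vertex with only $\le 1$ matched edge. Each time a dangerous vertex is permanently left with degree $\le 1$ in $M$, or each time a component of $M$ is closed off, we charge it to such a rare event; a union bound over $\tau=O(n)$ steps gives that the total number of such events is $O(n^{0.9})$ w.h.p. Hence all but $O(n^{0.9})$ vertices end with $d_M(v)=2$, so $M$ has size $n-O(n^{0.9})$, and since each component of $M$ is a path or cycle, the number of components is at most (number of degree-$\le 1$ vertices)$/2 + (\text{cycles})$; the cycle count is also $O(n^{0.9})$ because a cycle is closed only at an \texttt{Else} step that happens to join the two ends of a path, again a rare and chargeable event. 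This yields the stated bound of $O(n^{0.9})$ components.

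The step I expect to be the main obstacle — and the reason the paper defers it to the proof of part (b) of Lemma \ref{lem:performance} rather than writing it out here — is establishing the conditional near-uniformity of the deleted edges' endpoints at every step, i.e. that $G_t$ given $\cH_t$ is distributed as $G_{n,m_t}(Y_1^t,Y_2^t,Y^t,Z_1^t,Z^t)$, together with the resulting supermartingale estimate for $\zeta_t$ that must be robust enough to survive $\Theta(n)$ steps with only a polynomially small failure probability. Verifying that the deletion rule in lines 12--13 of \gr preserves the promised conditional distribution requires the careful accounting of half-edges in the pseudo-graph model, and controlling $\zeta_t$ down to the $O(n^{0.9})$ scale needs the maximum-degree bound of Lemma \ref{lem:maxdegree} to keep the martingale increments $O(\log n)$. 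Once that machinery is in place, the passage from ``$\zeta_t=o(n)$ throughout'' to ``$|M|=n-O(n^{0.9})$ and $O(n^{0.9})$ components'' is routine, and — crucially — is literally identical to the corresponding passage in Lemma \ref{lem:performance}(b), which is why I would simply cite that proof verbatim, noting only that the constant-weight assignment makes every \texttt{Else} step a uniformly random edge choice so that no part of the part-(b) argument is affected.
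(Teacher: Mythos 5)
Your proposal is correct and follows essentially the same route the paper takes: the paper simply declares the proof identical to that of Lemma~\ref{lem:performance}(b) and omits it, and your sketch reproduces the core of that argument (conditional uniformity of $G_t$ via Lemma~\ref{lem:uniformity} and the pseudo-graph model, a supermartingale/Azuma bound keeping $\zeta_t$ small, charging lost matched-degree to the rare ``edge deleted while incident to a dangerous vertex'' events, and bounding the number of cycles by the per-step closure probability $O(\Delta(G)/m_t)$). Two minor points worth noting: (i) here $\zeta_0=0$ since $Y_1=Y_2=\emptyset$, so the initial burn-in period $[0,\tau_1]$ that Lemma~\ref{lem:performance}(b) handles separately collapses, which makes the argument slightly simpler than in the weighted setting; and (ii) your claim that a cycle can only be closed in an \texttt{Else} step is not quite right — an \texttt{If} step can also join the two ends of a path in $M_t$ when $v_t\in D^t$ happens to be a path endpoint and the random incident edge hits the other endpoint — but this does not affect the $O(n^{0.9})$ bound, since the paper's per-step probability estimate $\Delta(G)/m_t$ applies to both branches.
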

The first thing that we need for the analysis of \gr is to characterize the distributions of $G=G_1,G_2,...,G_\tau$. It turns out  that $G_t$ exhibits a Markovian behaviour as described by the following lemma. 
\begin{lemma}\label{lem:uniformity}[Lemma 5.3 of \cite{frieze2014greedy}]
If $G_{0}\sim G_{n_0,m_0}(Y_1^0,Y_2^0,Y^0,Z_1^0,Z^0)$ then for $t\geq 1$, conditioned on $\cH_{t}$  we have that $G_{t}\sim G_{n_{t},m_{t}}(Y_1^{t},Y_2^{t},Y^{t},Z_1^{t},Z^{t})$.
\end{lemma}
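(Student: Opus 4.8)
The plan is to prove the statement by induction on $t$, the key point being that it suffices to establish the one-step transition: if $G_{t}\sim G_{n_t,m_t}(Y_1^t,Y_2^t,Y^t,Z_1^t,Z^t)$ conditioned on $\cH_t$, then $G_{t+1}\sim G_{n_{t+1},m_{t+1}}(Y_1^{t+1},Y_2^{t+1},Y^{t+1},Z_1^{t+1},Z^{t+1})$ conditioned on $\cH_{t+1}$. Recall that $G_{n,m}(Y_1,Y_2,Y,Z_1,Z)$ is defined as $G_{\bx}$ for $\bx$ uniform over the sequence set $\cS_{n,m}(Y_1,Y_2,Y,Z_1,Z)$, so the natural strategy is to work at the level of the underlying sequence $\bx$ rather than the multigraph: I would maintain the invariant that, conditioned on $\cH_t$, the sequence $\bx^{(t)}$ describing $G_t$ (i.e.\ listing the $m_t$ edges as ordered pairs, with some fixed but arbitrary ordering of the edges and of the two endpoints within each edge) is uniform over $\cS_{n_t,m_t}(Y_1^t,Y_2^t,Y^t,Z_1^t,Z^t)$. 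Since $\cH_t$ records exactly the $d_{M_t}$- and $d_t$-classes of every vertex, this invariant immediately yields the claimed distribution of $G_t$.

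For the inductive step I would analyze one iteration of the while-loop. There are two cases depending on whether $D^t=\emptyset$. When $D^t\neq\emptyset$, \gr picks $v_t\in D^t$ uniformly, then picks a uniformly random edge-slot incident to $v_t$; by exchangeability of the coordinates of a uniform $\bx^{(t)}$ restricted to those carrying the label $v_t$, the partner $u_t$ is a uniformly random element among the appropriate remaining slots, and one checks that deleting $e_t$ (and, if $v_t$ or $u_t$ thereby reaches $d_{M}=2$ or acquires a loop, deleting all its incident edges, then deleting isolated vertices) produces a sequence that is uniform over $\cS_{n_{t+1},m_{t+1}}(\cdot)$ with the updated sets --- because every sequence in the new set arises from the same number of sequences in the old set compatible with the recorded choices. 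When $D^t=\emptyset$, the selection rule picks a $w$-heaviest edge uniformly at random; here the weights are part of the external randomness and are independent of $\bx^{(t)}$ (the weights were assigned once at the start and \gr never inspects $\bx$ beyond degrees), so conditioning on which edge is heaviest still leaves $\bx^{(t)}$ uniform over the subset of sequences realizing that edge in a fixed slot, and the same edge-deletion bookkeeping goes through. In both cases the bookkeeping is a matter of verifying that the map from old sequences to new ones is a uniform covering onto $\cS_{n_{t+1},m_{t+1}}$ of the updated type, and that the updated sets $Y_1^{t+1},Y_2^{t+1},Y^{t+1},Z_1^{t+1},Z^{t+1}$ (which are $\cH_{t+1}$-measurable) are precisely the degree classes dictated by the deletions.

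The main obstacle, and where the argument needs care, is handling the cascade of deletions on line~12: when $v_t$ or $u_t$ crosses into having two matching-edges (or a loop) and we delete \emph{all} its remaining incident edges, the number of edges removed and the resulting degrees of the affected neighbors are data-dependent, and one must confirm that conditioning on these counts and on the identities of the moved vertices (all of which are part of $\cH_{t+1}$) keeps the residual sequence uniform. The clean way to see this is that deleting a prescribed set of slots from a uniform random sequence over a degree-constrained set, and then relabeling the degree classes accordingly, yields a uniform random sequence over the new degree-constrained set --- this is exactly the self-reducibility property that makes the pseudo-graph model convenient, and it is the content of Lemma~5.3 of \cite{frieze2014greedy}, whose proof I would follow; the only new feature here is the $w$-heaviest-edge branch, which, as noted, does not disturb the argument because the weights are independent of $\bx^{(t)}$. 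Assembling the one-step statement and iterating gives the lemma.
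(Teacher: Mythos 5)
The paper does not actually give a proof of this lemma; it cites Lemma~5.3 of \cite{frieze2014greedy} and relies on that argument verbatim. Your reconstruction follows the same route (induction on $t$, working at the level of the underlying pairing sequence $\bx$, and invoking the self-reducibility of the pseudo-graph model under degree-revealing deletions), and you correctly identify and dispose of the one genuinely new feature here --- the $w$-heaviest-edge branch --- by observing that the weights can be viewed as attached to edge slots and are therefore independent of the conditional law of $\bx^{(t)}$ given $\cH_t$, so that conditioning on which remaining slot is heaviest does not perturb the uniformity of the residual sequence.
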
 
Given the above lemma and the description of \gr what we are now missing in order to start the analysis of \gr are estimates on  probabilities like $p_{ij}=\Pr($a random edge of $G_t$ joins a vertex of degree $d_i$ to a vertex of a degree $d_j$) and $p_i=\Pr($ a random neighbor of $v_1\in V(G_t)$ has degree $d_i)$. Now given $\bd(G_t)$, the degree sequence of $G_t$, and assuming that $\Delta(G)\leq \log n$ such probabilities are easy to calculate as each sequence $\bx$ with degree sequence $d_\bx=\bd(G_t)$ corresponds to a partition into 2 elements sets of a $\sum_{v \in V(G_t)}d_t(v)$-element multi-set in which the element $v$ appears $d_t(v)$ times. So for example  if $i\neq j$ then $p_{ij}$ equals to $(i\nu_i/2m_t) (j\nu_j/2m_t)+o(1)$ where $\nu_i$ equals the number of vertices of degree $i$, $i\geq 0$ in $\bd(G_t)$. This is because $(i\nu_i/2m_t) (j\nu_j/2m_t)+o(1)$ equals to the ratio of the number of multigraphs with degree sequence resulting by reducing the degrees of a  vertex of degree $i$ and a vertex of degree $j$ by $1$ over the number of multigraphs with degree sequence $\bd(G_t)$. Fortunately for us, we note the following. Initially $|Y^0|=(1+o(1))n$ and $G_1$ has $(1.5+\e_1^3)n$ edges, thus all but $(\e_1^3+o(1))n$ vertices lie in $Y^0$ and have degree 3. This implies that at step $t$ at most $(\e_1^3+o(1))n$ vertices lie in $Z^t$ and have degree larger than $2$ or lie in $Y^t$ and have degree larger than $3$. This property of $\bd(G_t)$ suffices and no further investigation of the distribution of $\bd(G_t)$ is needed for the purposes of proving part (a) of Lemma 3.2. We further discuss the distribution of the degree sequence of $G\sim G_{n,m}(Y_1,Y_2,Y,Z_1,Z)$ at Appendix \ref{sec:app:config}.

\subsection{Proof of Lemma \ref{lem:performance} (a)}\label{subsec:(a)}
We start by introducing some notation that we use through this subsection. 
For $t\leq \tau$ we let $$ wmax_t=\max\{w(e):e\in E(G_t)\},\hspace{5mm} W_t=\sum_{i=0}^t w(e_i)\hspace{5mm}\text{ and }\hspace{5mm} p_t=3|Y_3^t|/2m_t.$$ In addition for $t>0$ we let 
$$\bz_t=\sum_{i=t}^{t+\log^2 n-1} \mathbb{I}(\z_i>0)).$$ 
We also define the stopping times $\tau_1,\tau_2$ by, 
$$\tau_1=\min\{t: \z_t=0\} \text{ and }\tau_2=\min\{t: |Y_3^t| \leq \epsilon_1  n\}.$$
Finally, for $t \geq 0$ we let $\cE_t$ be the event that $\tau_1\leq \max\{ 200\e^{-2}_1\z_0,\log^2 n\} = O(n^{0.95}), \Delta(G)\leq \log n$ and $\zeta_i\leq 200\e^{-2}_1\log n$ for $i\in [\tau_1, \tau_2+\log^2 n]\cap [0,t]$.

To prove Lemma 3.2 (a) we use the differential equation method for dynamic concentration \cite{wormald1999differential}. The exact result that we use is Theorem \ref{thm:diff}, it is taken from \cite{warnke2019wormald} and stated at Appendix \ref{sec:app:diffmethod}. We use it in order to track the evolution of $|Y^t|,m_t,wmax_t$ and $W_t$ and subsequently provide a lower bound on $W_\tau=\sum_{e\in M_\tau}w(e)$ that holds w.h.p.

Upon calculating the 1-step change of $Y^t$ and $m_t$ conditioned on $\cH_t$, a step needed for applying the differential equation method, one realizes that the corresponding expressions involve the discrete $\cH_t$-measurable random variable $\mathbb{I}(\zeta_t>0)$ in a manner that makes it difficult to apply Theorem \ref{thm:diff} directly in any meaningful manner. To deal with this, later when we will apply the differential equation method, instead of using the 1-step changes directly we use the $\log^2 n$-steps changes and show that during intervals of size $\log^2 n$ we may approximate the parts of the expressions in interest involving $\mathbb{I}(\zeta_t>0),...,\mathbb{I}(\zeta_{t+\log^2 n}>0)$ by an expression that involves $Y^t$ and $m_t$ only. It is also worth mentioning that we  track the increase of $W_t$ from the first time $\z_t$ hits $0$ till $|Y_3^t| \leq \epsilon_1^3 n$ corresponding to $\tau_1$ and $\tau_2$ respectively.

We start by calculating 1-step changes conditioned on the event $\cE_t$. Lemma \ref{lem:probei} (stated later at this subsection) states that $\cE_t$ occurs  for $t\leq \tau_2$ w.h.p.
\begin{lemma}\label{lem:onestep}
\begin{align}\label{eq:1changez}
    \mathbb{E}(\z_{i+1}+\z_i|\cH_i,\cE_t) = -\mathbb{I}(\z_t>0)+(2-\mathbb{I}(\z_t>0)) \sum_{i\geq 2}\frac{i |Z_i^t|}{ 2m_t}\bigg(  \frac{2|Z_2^t|}{2m_t}+2\frac{3|Y_3^t|}{2m_t} \bigg) + O\bfrac{1+\z_t}{2m_t}.
\end{align}
\begin{align}\label{eq:1expchageY}
\mathbb{E}\big(|Y^{t+1}|-|Y^t|\big|\cH_t, \cE_t\big)= - (2-\mathbb{I}(\z_t>0))\bigg( \sum_{i\geq 3}\frac{i |Y_i^t|}{ 2m_t}+  \sum_{i\geq 2}\frac{i |Z_i^t|}{ 2m_t} \frac{(i-1) 3 |Y_3^t|}{ 2m_t}\bigg)+ O\bfrac{1+\z_t}{2m_t}. 
\end{align}
\begin{align}\label{eq:1changem_t}
\mathbb{E}(2m_{t+1}-2m_t|\cH_t, \cE_t) = -2 - (2-\mathbb{I}(\z_t>0)) \sum_{i\geq 2}\frac{i |Z_i^t|}{ 2m_t}\cdot 2(i-1) + O\bfrac{1+\z_t}{2m_t}.
\end{align}
\end{lemma}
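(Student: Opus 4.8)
The plan is to compute each of the three one-step conditional expectations by tracking exactly which edges and vertices get deleted in a single iteration of the while-loop, using the Markovian description of $G_t$ from Lemma \ref{lem:uniformity} together with the degree-sequence estimates for $p_{ij}$ and $p_i$ discussed just before this subsection. First I would condition on $\mathbb{I}(\zeta_t>0)$, since this determines whether \gr picks a dangerous vertex $v_t\in D^t$ (and then a uniformly random incident edge) or a $w$-heaviest edge; in the latter case the chosen edge $e_t=v_tu_t$ behaves, as far as the two endpoints' degrees are concerned, like a uniformly random edge of $G_t$, because conditioned on $\cH_t$ and on the identity of the heaviest-weight edge the endpoints are still exchangeable with a uniform edge up to $o(1)$ errors (this is where $\cE_t$, guaranteeing $\Delta(G_t)\le\log n$, is used to control those errors by $O((1+\zeta_t)/2m_t)$). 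The key bookkeeping is: after adding $e_t$ to $M_{t+1}$ and deleting it, each of $v_t,u_t$ has its $M$-degree incremented, and if either now has $M$-degree $2$ (which happens precisely when that endpoint already lay in some $Z_j^t$) then \emph{all} its remaining $G_t$-edges are deleted, each such edge in turn decrementing the degree of its other endpoint.

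For \eqref{eq:1changem_t}: the edge $e_t$ itself accounts for the $-2$ in $2m_{t+1}-2m_t$. Beyond that, an endpoint triggers a ``cleanup'' deletion of its other $d_t-1$ edges exactly when it was already in $Z_i^t$ for some $i\ge 2$ and is chosen as (a random neighbor of $v_t$, or as $v_t$/$u_t$ of a random edge). In the non-dangerous case ($\mathbb{I}(\zeta_t>0)=0$) both endpoints of the uniform edge $e_t$ are independently, to leading order, a vertex of degree $i$ with probability $i|Z_i^t|/2m_t$ among the relevant pool, and each such event removes $2(i-1)$ further half-edges (the $i-1$ edges, counted with their two endpoints; but the factor must be read off carefully — each removed edge removes $2$ from $\sum d$, so $i-1$ edges remove $2(i-1)$), giving the term $-(2-\mathbb{I}(\zeta_t>0))\sum_{i\ge2}\frac{i|Z_i^t|}{2m_t}\cdot 2(i-1)$; the factor $2-\mathbb{I}(\zeta_t>0)$ is the expected number of endpoints of $e_t$ that are ``fresh'' choices (two in the non-dangerous case, one in the dangerous case since $v_t$ itself is the pre-specified dangerous vertex and contributes to $\zeta$-accounting rather than to $m$-decrements). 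For \eqref{eq:1expchageY}: a vertex leaves $Y^t$ either when it is $v_t$ or $u_t$ and thereby gets matched (losing it to $Z^t$ or, upon a second match, out entirely), contributing $-(2-\mathbb{I}(\zeta_t>0))\sum_{i\ge3}\frac{i|Y_i^t|}{2m_t}$, or when it is a $Y_3^t$-neighbor of an endpoint that triggers cleanup, losing one edge and dropping to degree $2$; the latter requires an endpoint in $Z_i^t$ (probability $\propto i|Z_i^t|/2m_t$), each of whose $i-1$ cleanup-edges hits a degree-$3$ vertex in $Y^t$ with probability $\approx 3|Y_3^t|/2m_t$, yielding the second summand. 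For \eqref{eq:1changez}: adding $e_t$ when $\zeta_t>0$ matches the dangerous vertex $v_t$, removing it from $D^t$ and decrementing $\zeta$ by $1$ (the $-\mathbb{I}(\zeta_t>0)$ term); meanwhile cleanup from a $Z_i^t$-endpoint can push a $Z_2^t$- or $Y_3^t$-neighbor down into $D^{t+1}$, each $Z_2^t$-neighbor becoming a $Y_1$-type dangerous vertex and each $Y_3^t$-neighbor becoming a $Z_2$- then possibly $Y_1$-type — the coefficient $\big(\frac{2|Z_2^t|}{2m_t}+2\frac{3|Y_3^t|}{2m_t}\big)$ records the expected $\zeta$-increment per cleanup-edge (a $Z_2^t$ hit adds $1$ to $\zeta$, a $Y_3^t$ hit eventually adds $2$ as it passes through $Z_2$ to $Y_1$), and there are in expectation $(2-\mathbb{I}(\zeta_t>0))\sum_{i\ge2}\frac{i|Z_i^t|}{2m_t}(i-1)$ such cleanup-edges — but here the paper has folded one factor of $(i-1)$ differently, so I would re-derive the exact combinatorial constant by summing over the per-edge contributions rather than trusting a guessed shape.

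The main obstacle, and where I expect to spend the most care, is \emph{not} the leading-order heuristics but controlling all the lower-order corrections uniformly so that the true conditional expectation agrees with the stated expression up to $O((1+\zeta_t)/2m_t)$. Three things contribute to this error term: (i) the $p_{ij}=\frac{i\nu_i}{2m_t}\cdot\frac{j\nu_j}{2m_t}+o(1)$ approximation for edge-endpoint degrees in the pseudo-graph model, valid with error $O(\Delta(G_t)^2/m_t)=O(\log^2 n/n)$ on the event $\cE_t$; (ii) double-counting when the two endpoints $v_t,u_t$ coincide or are adjacent, or when two cleanup events overlap, events of probability $O(\log^2 n/m_t)$; and (iii) the possibility that \gr deletes additional isolated vertices, which affects $|Y^t|$ but only through vertices that had already dropped to degree $0$, contributing $O((1+\zeta_t)/2m_t)$ since at most $O(\zeta_t)$ low-degree vertices are ``in play''. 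Pinning down that the accumulated slack is exactly of the claimed order — in particular that it scales with $1+\zeta_t$ and not merely with $1$ — is the delicate part; the $\zeta_t$-dependence comes from the fact that when $\zeta_t$ is large there are more dangerous vertices whose exact status (and hence whose contribution to the one-step change) is harder to pin to leading order, and one must check that each contributes at most $O(1/2m_t)$ in expectation, summing to $O(\zeta_t/2m_t)$. The rest is a finite, if tedious, case analysis over which of $v_t,u_t$ lands in $Y_i^t$ versus $Z_i^t$, multiplied by the resulting changes, and then simplification using $\Delta(G_t)\le\log n$ to discard cross terms.
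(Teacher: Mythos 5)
Your proposal is correct and follows essentially the same route as the paper's proof: you condition on whether line~8 or lines~5--6 of \gr fires (via $\mathbb{I}(\zeta_t>0)$), track which endpoints trigger the cleanup in line~12, and account for the resulting changes to $\zeta_t$, $|Y^t|$ and $2m_t$ using the degree-proportional probabilities $i|Z_i^t|/2m_t$ and $j|Y_j^t|/2m_t$ from the pseudo-graph model, with the $O((1+\zeta_t)/2m_t)$ slack absorbing loops, multi-edges, and the event $u_t\in D^t$. Your suspicion about the missing factor of $(i-1)$ in \eqref{eq:1changez} is well-founded: the cleanup of an endpoint in $Z_i^t$ touches $i-1$ further edges, so the correct expression is $\sum_{i\ge 2}\frac{i|Z_i^t|}{2m_t}(i-1)\big(\frac{2|Z_2^t|}{2m_t}+2\frac{3|Y_3^t|}{2m_t}\big)$ (as the paper's own prose and the analogues \eqref{eq:1expchageY}--\eqref{eq:1changem_t} confirm); the displayed formula is a typo, harmless downstream only because $Z_i^t$ with $i\ge 3$ contributes $O(\epsilon_1^2)$. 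One small slip in your narration: a $Z_2^t$-neighbor that loses an edge moves to $Z_1$ (not $Y_1$) and a $Y_3^t$-neighbor moves to $Y_2$ (not $Z_2$); fortunately you still assigned the correct $\zeta$-increments ($+1$ and $+2$ respectively), so the formula you arrive at is unaffected.
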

\begin{proof}
We start by justifying \eqref{eq:1changez}, for that consider the step $t$ of \grs By adding $e_t$ to $M_t$ and removing it from $G_t$, $\z_t$ decreases in expectation by $\mathbb{I}(\z_t>0)+ \Pr(u_t \in D^t)+ \Pr(v_t\in D^t \wedge \z_t=0)$. The later two terms equal to $O(\z_t/m_t)$. Thereafter for $z\in \{v_t,w_t\}\setminus D^t$ if $z$ belongs to $Z^t$ then $z$ is removed from $G_t$, every neighbor $w$ of $z$ in $Z_2^t$ and $Y_3^t$ for which $wz$ is not a multiple edge, enters $Z_1^t$ and $Y_2^t$ respectively, resulting to the increase of $\z_t$ by $1$ and $2$ respectively. $i|Z_i^t|/2m_t$ and $j|Y_j^t|/2m_t$ account for the probabilities that a vertex chosen proportional to its degree belongs to $Z_i^t$ and $Y_j^t$ respectively. The $O((1+\z_t)/2m_t)$ additive factor in \eqref{eq:1changez}-\eqref{eq:1changem_t},  accounts for the unlikely events that we identify a loop or a multiple edge during this step or $u_t\in D^t$ or $\z_t=0$ and $v_t\in D^t$.  

Similarly for \eqref{eq:1expchageY} for every vertex $z\in \{v_t,w_t\}\setminus D^t$, if $z\in Y^t$ then $|Y^t|$ is decreased by $1$. 
Else if $z\in Z^t$ then $z$ is removed from $G_t$ and $|Y^t|$ decreased by the number of neighbors of $z$ in $Y^t_3\setminus \{v_t,u_t\}$.

For \eqref{eq:1changem_t}, $2m_t$ is initially decreased by $2$ (as $e_t$ is removed from $G_t$) and then for every vertex $z\in(Z\cap \{v_t,w_t\})\setminus D^t$ that is not incident to a loop or a multi-edge, $2m_t$ is further decreased by $2(d_t(z)-1)$ (as $z$ and the edges incident to it are removed from $G_t$).
\end{proof}
We will use the next lemma to deal with the terms $\mathbb{I}(\z_t>0)$ appearing in \eqref{eq:1changez}-\eqref{eq:1changem_t} later on. Recall  $p_t=3|Y_3|/2m_t$ and $\bz_t=\sum_{i=t}^{t+\log^2 n-1} \mathbb{I}(\z_i>0))$.
\begin{lemma}\label{lem:probei}
W.h.p. the event $\cE_t$ occurs for $t\in [\tau_1,\tau_2]$. In addition,
\begin{align}\label{eq:changezt}
    \bigg|\mathbb{E}\bigg(\bz_t|\cH_t,\cE_t) -\frac{(2-2p_t^2)\log^2 n}{2-p_t^2} \bigg) \bigg| =O(\epsilon_1^2\log^2 n) 
    \text{ for }t\in [\tau_1,\tau_2).
\end{align}
\end{lemma}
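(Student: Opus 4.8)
The plan is to prove the two assertions separately. First, for the claim that $\cE_t$ holds w.h.p.\ for $t\in[\tau_1,\tau_2]$: the event $\cE_t$ bundles three sub-events, so I would verify each in turn. The bound $\Delta(G)\leq\log n$ is immediate from Lemma \ref{lem:maxdegree} applied to $G=G_1$, together with Lemma \ref{lem:uniformity}, which says each $G_t$ is again of the form $G_{n_t,m_t}(\dots)$, so the maximum degree can only decrease along the process. For the bound $\tau_1\leq\max\{200\epsilon_1^{-2}\zeta_0,\log^2 n\}$: while $\zeta_t>0$ there is always a dangerous vertex, so \gr\ matches one, and the one-step expression \eqref{eq:1changez} shows that $\mathbb{E}(\zeta_{t+1}-\zeta_t\mid\cH_t)\leq -1+O(\epsilon_1^2)+O((1+\zeta_t)/m_t)$, i.e.\ $\zeta_t$ has a negative drift bounded away from $0$ (using $p_t\leq 1$ and $|Z^t|=O(\epsilon_1^3 n)$ so the $Z$-sum is $O(\epsilon_1^2)$, hence $(2-\mathbb I(\zeta_t>0))\sum\cdots=O(\epsilon_1^2)$). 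A supermartingale / hitting-time argument (e.g.\ a Doob decomposition plus Azuma, or a direct comparison with a biased random walk) then gives that $\zeta_t$ hits $0$ within $O(\epsilon_1^{-2}\zeta_0)$ steps w.h.p.\ when $\zeta_0$ is large, and within $O(\log^2 n)$ steps w.h.p.\ when $\zeta_0$ is small; the $+O((1+\zeta_t)/m_t)$ correction is negligible since $m_t=\Theta(n)$ throughout the relevant range. The same drift argument, restarted at each time $\zeta$ re-enters the positive region (which happens at rate $O(\epsilon_1^2)$ per step by \eqref{eq:1changez}), shows that $\zeta_i$ never exceeds $200\epsilon_1^{-2}\log n$ on the whole interval $[\tau_1,\tau_2+\log^2 n]$ w.h.p.: each excursion above $0$ has length $O(\epsilon_1^{-2}\log n)$ and maximum height $O(\epsilon_1^{-2}\log n)$ with probability $1-n^{-\omega(1)}$, and there are only polynomially many excursions, so a union bound closes it.

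For the estimate \eqref{eq:changezt} I would condition on $\cH_t$ with $t\in[\tau_1,\tau_2)$ and analyse the ``renewal-type'' process $\mathbb I(\zeta_t>0),\mathbb I(\zeta_{t+1}>0),\dots$ over the window of length $\log^2 n$. The heuristic is a two-state description: when $\zeta=0$, one step later $\zeta>0$ with probability $q_t:=(2-\mathbb I(\zeta_t>0))\sum_{i\geq2}\tfrac{i|Z_i^t|}{2m_t}\big(\tfrac{2|Z_2^t|}{2m_t}+2\tfrac{3|Y_3^t|}{2m_t}\big)+O(1/m_t)$ — read off directly from \eqref{eq:1changez} with $\mathbb I(\zeta_t>0)=0$; and when $\zeta>0$, \gr\ drives it back down to $0$ in $O(1)$ expected steps because the drift is $\leq-1+O(\epsilon_1^2)$. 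Over $[\tau_1,\tau_2)$ the degree-sequence structure discussed after Lemma \ref{lem:uniformity} gives $2|Z_2^t|/2m_t = p_t + O(\epsilon_1^3)$ (most degree-2 vertices not yet touched by $M$ are the ``new'' ones created from $Y_3$, matched to the $Z_2$ count), and the $i\geq3$ tail of the $Z$-sum is $O(\epsilon_1^3)$; substituting, $q_t = (2-0)\cdot p_t\cdot(p_t+2p_t)/1 + O(\epsilon_1^2) = $ wait — more carefully, with $3|Y_3^t|/2m_t=p_t$ and $2|Z_2^t|/2m_t = p_t+O(\epsilon_1^3)$ one gets $q_t=2\,p_t\,(p_t+2p_t)+O(\epsilon_1^2)=$, and symmetrically the return probability is $p_t^2$-weighted, so the long-run fraction of steps with $\zeta>0$ equals $q_t/(q_t+(\text{down-rate}))$, which after simplification is $\tfrac{2-2p_t^2}{2-p_t^2}$ up to $O(\epsilon_1^2)$. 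I would make this rigorous by writing $\bz_t=\sum_{i=t}^{t+\log^2 n-1}\mathbb I(\zeta_i>0)$ as a sum over the (at most $O(\log^2 n)$) alternating blocks of $0$'s and positive excursions inside the window, bounding the expected length of each block type using the one-step drift from Lemma \ref{lem:onestep}, and controlling the accumulated error: each of the $\Theta(\log^2 n)$ steps contributes an $O(\epsilon_1^2)$ relative error from the degree-sequence approximation and an $O(1/m_t)$ absolute error, and the number-of-blocks fluctuation is $O(\epsilon_1^2\log^2 n)$ by concentration of a sum of near-independent indicators (Azuma on the Doob martingale of $\sum\mathbb I(\zeta_i>0)$, whose increments are $O(1)$). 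Summing, the total deviation from $\tfrac{(2-2p_t^2)\log^2 n}{2-p_t^2}$ is $O(\epsilon_1^2\log^2 n)$ as required.

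I expect the main obstacle to be making the ``two-state renewal'' picture genuinely rigorous rather than heuristic: the indicators $\mathbb I(\zeta_i>0)$ are not independent and the excursion lengths depend on $\cH_i$ through the slowly-varying quantities $|Y^i|,m_i$ (hence $p_i$), which themselves move by $O(\log^2 n)$ over the window — so one must argue that $p_i$ is essentially constant ($=p_t+O(\log^2 n/n)$) across the window and that the number and shape of excursions concentrate. The cleanest route is probably to couple the $\zeta$-process on the window with an honest Markov chain on $\{0,1,2,\dots\}$ having the drift bounds from Lemma \ref{lem:onestep} frozen at their time-$t$ values, compute the stationary fraction of time that chain spends in the positive states exactly (it is a quasi-birth-death chain, so this is a one-line geometric computation giving $\tfrac{2-2p_t^2}{2-p_t^2}$), and then bound the coupling error by the total variation accumulated over $\log^2 n$ steps, which is $O(\epsilon_1^2\log^2 n)$ because the per-step discrepancy in transition probabilities is $O(\epsilon_1^2)+O(1/m_t)$. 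The $O(\epsilon_1^2\log^2 n)$ error budget is comfortable precisely because we only need the window-averaged behaviour, not step-by-step control of $\mathbb I(\zeta_t>0)$ — which is the whole point of passing to $\log^2 n$-step increments flagged in the paragraph preceding the lemma.
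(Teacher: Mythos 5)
Your proposal identifies the right target formula and the first half (the Azuma hitting-time/excursion argument showing $\cE_t$ holds w.h.p., which matches the paper's approach), but the route you propose for \eqref{eq:changezt} has a concrete error and then takes on unnecessary machinery that the paper avoids entirely.

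The error: you write $2|Z_2^t|/2m_t = p_t + O(\e_1^3)$, but the paper's second claim (equation \eqref{eq:probabilitiesapprox}) gives $2|Z_2^t|/2m_t + 3|Y_3^t|/2m_t \in [1-\e_1^2,1]$, i.e.\ $2|Z_2^t|/2m_t = 1-p_t + O(\e_1^2)$, not $p_t$. Plugging the correct value into \eqref{eq:1changez} with $\z_t=0$ gives an expected one-step increase $\approx 2(1-p_t)(1+p_t)=2(1-p_t^2)$, and with $\z_t>0$ an expected one-step decrease $\approx 1-(1-p_t^2)=p_t^2$. Your intermediate line $q_t=2p_t(p_t+2p_t)$ does not match either of these. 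It is true that the drift-balance $f\cdot p_t^2 = (1-f)\cdot 2(1-p_t^2)$ solves to $f=\tfrac{2-2p_t^2}{2-p_t^2}$, which is the right answer, but this is a balance of expected increments, not the transition-probability formula $q/(q+r)$ for a two-state chain that you invoke; for the actual multi-state $\z$-chain the excursion-length computation you call ``a one-line geometric computation'' is not one line, since the chain can enter the positive region at levels $1,2,3,4$ and the entry distribution matters.

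More importantly, you have correctly anticipated the main obstacle (making the renewal/stationarity picture rigorous when $p_i$ drifts over the window and the indicators are dependent) but the paper does not confront this obstacle at all — it evades it. The key step you are missing is the telescoping manipulation in \eqref{eq:lem341}: using \eqref{eq:1changez} and \eqref{eq:probabilitiesapprox}, the paper writes, for each $0\leq i<\log^2 n$,
\[
\mathbb{E}(\z_{t+i+1}-\z_{t+i}\mid\cH_t,\cE_t)=(2-p_t^2)\Big[\tfrac{2-2p_t^2}{2-p_t^2}-\mathbb{E}(\mathbb{I}(\z_{t+i}>0)\mid\cH_t,\cE_t)+O(\e_1^2)\Big],
\]
where the frozen $p_t$ is justified because $|Y_3^{t+i}|$ and $m_{t+i}$ move by only $O(\log^3 n)=o(\e_1^2 n)$ over the window (equation \eqref{eq:bounds}). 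Summing over $i$ makes the left-hand side telescope to $\mathbb{E}(\z_{t+\log^2 n}-\z_t\mid\cH_t,\cE_t)$, which by \eqref{eq:333} (a consequence of $\z$ being bounded by $O(\e_1^{-2}\log n)$ on $\cE_t$) has magnitude $O(\e_1^{-2}\log n)=o(\e_1^2\log^2 n)$; rearranging gives \eqref{eq:changezt} immediately. No coupling, no stationary distribution, no control over the number or shape of excursions is needed — the boundedness of $\z_t$ on $\cE_t$ already forces the window-average of $\mathbb{I}(\z_{t+i}>0)$ to equal the drift-balance value. That observation is the heart of the proof and is absent from your proposal.
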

\begin{proof} We start by proving 2 claims. 
\begin{claim}
For $t\geq 0$ and $0\leq i\leq \tau-t$,
\begin{equation}\label{eq:bounds}
|\z_{t+i}-\z_t|, \big||Y^{t+i}|-|Y^t|\big|, |2m_{t+i}-2m_t|\leq 4i\Delta(G)    
\end{equation}
\end{claim}
\begin{proof}
The above claim follows from the fact that at step $j$ only the edges incident to $\{v_j,u_j\}$ are deleted from $G_j$. Therefore, $|Y^{j+1}\setminus Y^j| \leq |N_{G_j}(v_j)\cup N_{G_j}(u_j)|\leq 2\Delta(G)$, $2m_{j+1}-2m_j\leq d_j(v_j)+ d_j(u_j)\leq 2\Delta(G)$ and $|\z_{j+1}-\z_j|\leq 2|D^{j+1}\triangle D^j| \leq 2(d_j(v_j)+ d_j(u_j))\leq 4\Delta(G)$.  
\end{proof}

\begin{claim}
For $t\leq \tau_2+\log^2 n$ if $\zeta_t\leq \e_1^3 n$ and $\Delta(G)\leq \log n$ then, 
\begin{align}\label{eq:probabilitiesapprox}
\sum_{i\geq 3}\frac{2|Z^t_i|}{2m_t}+ \sum_{i\geq 4}\frac{3|Y^t_i|}{2m_t} \leq \epsilon_1^2 
& \text{\hspace{15mm} and} &1-\epsilon^2_1\leq \frac{2|Z^t_2|}{2m_t}+ \frac{3|Y^t_2|}{2m_t} \leq 1.
\end{align}
\end{claim}
\begin{proof}
 $\sum_{i\geq 3} i|Z^t_i|+ \sum_{i\geq 4}i|Y^t_i|$ counts the number of pairs $(e,v)$ where $e$ is an edge of $G_t$, $v$ is an endpoint of $e$ and $v\in (Z^t\cup Y^t)\setminus (Z_2^t\cup Y_3^t)$. Observe
that a vertex $v$ belongs to $(Z^t\cup Y^t)\setminus (Z_2^t\cup Y_3^t)$ only if it belongs to $(Z^0\cup Y^0)\setminus (Z_2^0\cup Y_3^0)$. Therefore,
\begin{align*}
    &\sum_{i\geq 3} i|Z^t_i|+ \sum_{i\geq 4}i|Y^t_i|\leq \sum_{i\geq 3} i|Z^0_i|+ \sum_{i\geq 4}i|Y^0_i| \leq (1.5+\epsilon_1^3)n-3|Y_3^0| =(\epsilon_1^3+o(1))n.
\end{align*}
For $t\leq \tau_2+\log^2 n$, using \eqref{eq:bounds} and $\Delta(G)\leq \log n$ we have that 
$$2m_t\geq 3|Y_3^{\tau_2+\log^2 n}|\geq
3|Y_3^{\tau_2-1}|-4(1+\log^2 n)\Delta(G) = (3+o(1))\epsilon_1 n.$$ Thus,
\[\sum_{i\geq 3}\frac{i|Z^t_i|}{2m_t}+\sum_{i\geq 4}\frac{i|Y^t_i|}{2m_t} \leq \frac{(\epsilon^3_1+o(1))N}{(3+o(1))\e_1 N} \leq \e_1^2 \]  
Similarly if $\zeta_t\leq \e^3_1 n$ then,
\[ \frac{2|Z^t_2|}{2m_t}+ \frac{3|Y^t_2|}{2m_t} =1-\frac{\z_t + \sum_{i\geq 3}i|Z^t_i|+\sum_{i\geq 4}i|Y^t_i|}{2m_t} \in [1-\e^2_1,1].\]
\end{proof}
Equation \eqref{eq:1changez} implies,
\begin{align}\label{eq:weakboundz}
   \mathbb{E}(\z_{i+1}+\z_i|\cH_i, \cE_t) \leq  -\mathbb{I}(\z_t>0)+(2-\mathbb{I}(\z_t>0))\bigg(1-\frac{3|Y^t_3|}{2m_t}\bigg)\bigg( 1+\frac{3|Y_3^t|}{2m_t} \bigg) +o(1).
\end{align}
In particular, if $\z_t>0$ then as $|Y_3^t|\geq \e_1 n+O(\log^3 n)$ and $m_t\leq m_0\leq (1.5+\e_1^3)n$ for $t\leq \tau_2+\log^2 n$,
\begin{align*}
    \mathbb{E}(\z_{i+1}+\z_i|\cH_i, \cE_t) \leq  -1+\bigg(1-\frac{3\e_1 n}{2(1.5+\e^3_1) n)}\bigg)\bigg( 1+\frac{3\e_1 n}{2(1.5+\e^3_1) n)} \bigg) \leq -0.7\e_1^2.
\end{align*}
Lemmas \ref{lem:maxdegree}  and \eqref{eq:bounds} imply that $|\z_t-\z_{t+1}|\leq 4\log n$ for $t\geq 0$ w.h.p. Initially $\z_0\leq |Y_1^0\cup Z_1^0|+2|Y_2^0|=O(n^{0.95})$. Hence the Azuma-Hoeffding inequality implies that $\z_t$ reaches 0 in the interval $[0,\max\{200\e^{-2}\z_0,\log^2 n\}]$, hence $\tau_1\leq \max\{200\e^{-2}\z_0,\log^2 n\}=O(n^{0.95})$ w.h.p. Similarly, for $t\in [t_1,t_2+\log^2 n]$ we have that $\z_t\geq 200 \e_1^{-2} \log n$ only if there exists $t'\in [t_1,t_2]$ such that $\z_{t'}=0$ and  $\z_{t''}>0$ for $t''\in [t',t-1]$. Once again, by taking union bound over $\tau_1\leq t'\leq t\leq \tau_2+\log^2 n \leq n+\log^2 n$, the Azuma-Hoeffding inequality implies that $\z_t\leq 200 \e_1^{-2} \log n$ for $t\in [\tau_1,\tau_2+\log^2 n]$ w.h.p. Therefore the event $\cE_t$ occurs for $0\leq t\leq \tau$ w.h.p. 

By repeating the above argument one can show that w.h.p. for $t\in [\tau_1,\tau_2]$,
\begin{equation}\label{eq:333}
-200\e_1^{-2}\log n\leq  \mathbb{E}(\z_{t+\log^2 n}-\z_t|\cH_t,\cE_t)\leq 400\e_1^{-2}\log n.
\end{equation}     
\eqref{eq:1changez} and \eqref{eq:probabilitiesapprox} imply that for $0\leq i<\log^2 n$
\begin{align}\label{eq:lem341}
    \mathbb{E}&(\z_{t+i+1}-\z_{t+i}|\cH_t,\cE_t)= \mathbb{E}(\mathbb{E}(\z_{t+i+1}+\z_{t+i}|\cH_{t+i})|\cH_i,\cE_i) 
\nonumber    \\&=\mathbb{E}\bigg( -\mathbb{I}(\z_{t+i}>0)+(2-\mathbb{I}(\z_{t+i}>0))\bigg(1-\frac{3|Y^{t+i}_3|}{2m_{t+i}}\bigg)\bigg( 1+\frac{3|Y_3^{t+i}|}{2m_{t+i}} \bigg)\bigg|\cH_t,\cE_t\bigg)+O(\e^2_1)
\nonumber    \\&=\mathbb{E}\bigg( -\mathbb{I}(\z_{t+i}>0)+(2-\mathbb{I}(\z_{t+i}>0))\bigg(1-\frac{3|Y^{t}_3|}{2m_{t}}\bigg)\bigg( 1+\frac{3|Y_3^{t}|}{2m_{t}} \bigg)\bigg|\cH_t,\cE_t\bigg)+O(\e^2_1)
\nonumber    \\& = 2\bigg(1-\bfrac{3|Y^{t}_3|}{2m_{t}}^2\bigg) - \bigg(2-\bfrac{3|Y_3^{t}|}{2m_{t}}^2\bigg)\mathbb{E}( \mathbb{I}(\z_{t+i}>0)|\cH_t,\cE_t) +O(\e^2_1).
\nonumber \\& = 2(1-p_t^2) - (2-p_t^2)\mathbb{E}( \mathbb{I}(\z_{t+i}>0)|\cH_t,\cE_t) +O(\e^2_1) \nonumber
\\&= (2-p_t^2)\bigg[\frac{2-2p_t^2}{2-p_t^2} - \mathbb{E}( \mathbb{I}(\z_{t+i}>0)|\cH_t,\cE_t)  +O(\e^2_1) \bigg].
\end{align}
\eqref{eq:changezt} follows from summing \eqref{eq:lem341} over $i\in\{0,1,...,\log^2 n-1\}$ and then using \eqref{eq:333} to provide upper and lower bounds on the resultant expression.
\end{proof}

We now define the variables $\bY_i,\bm_i, \bwmax_i$ and $\bW_i$ as follows. For $i\geq 0$ let $i_n=i\log^2 n$ and
$$ \bY_i=\frac{|Y^{\tau_1+i_n}|}{\log^2 n},\hspace{5mm} \bm_i=\frac{m_{\tau_1+i_n}}{\log^2 n},\hspace{5mm} \bwmax_i= \frac{n}{\log^2 n}\cdot wmax_{\tau_1+i_n} \text{ and } \bW_i=\frac{W_{\tau_1+i_n}-W_{\tau_1}}{\log^2 n}.$$
We also define the stopping time $T$ by 
$$T=\min\set{i:\tau_1+i_n \geq \tau_2}.$$

\begin{lemma}\label{boundedness}[Boundedness-Hypothesis] W.h.p. for $t\in [\tau_1, \tau_2)$, 
\begin{equation}\label{eq:bounds2}
    |\bY_{t+1}-\bY_t|,|2m_{t+1}-{2m_t}|, |\bwmax_{t+1}-\bwmax_t|,|\bW_{t+1}-\bW_t| \leq 4\log^6 n.
\end{equation}
\end{lemma}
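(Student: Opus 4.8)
Each of the four differences in \eqref{eq:bounds2} is, after the rescalings in the definitions of $\bY,\bm,\bwmax,\bW$, the change of $|Y^{\cdot}|$, $2m_{\cdot}$, $wmax_{\cdot}$ or $W_{\cdot}$ over a window of $\log^2 n$ consecutive steps of the \gr algorithm; so it suffices to bound those window-changes, and I would do so on the high-probability event $\Delta(G)\le\log n$ of Lemma~\ref{lem:maxdegree}. On this event a single step of \gr deletes $e_t$ together with, for each saturated endpoint $z\in\{v_t,u_t\}$, the $d_t(z)\le\Delta(G)$ edges at $z$, hence at most $2\log n$ edges per step, all incident to $\{v_t,u_t\}$. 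Feeding $\Delta(G)\le\log n$ into \eqref{eq:bounds} with $i=\log^2 n$ shows that $|Y^{\cdot}|$ and $2m_{\cdot}$ each change by at most $4\log^2 n\cdot\Delta(G)\le 4\log^3 n$ over a window, so dividing by $\log^2 n$ gives $|\bY_{t+1}-\bY_t|,\,|2\bm_{t+1}-2\bm_t|\le 4\log n\le 4\log^6 n$. For $W$, each increment $w(e_i)$ lies in $[0,20]$ since the weights are $Exp_{\leq 20}(1)$, and $W$ is non-decreasing, so $W$ changes by at most $20\log^2 n$ over a window and $|\bW_{t+1}-\bW_t|\le 20\le 4\log^6 n$.

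The bound on $\bwmax$ is the only delicate one. Since edges are only deleted, $wmax_{\cdot}$ is non-increasing, so it suffices to show that w.h.p.\ $wmax$ drops by at most $4\log^8 n/n$ over every window in $[\tau_1,\tau_2)$; rescaling by $n/\log^2 n$ then gives the claim. I would use two ingredients. First, an order-statistics estimate: the density of $Exp_{\leq 20}(1)$ is bounded below by $e^{-20}/(1-e^{-20})$ on $[0,20]$, so by a Chernoff bound and a union bound over an $O(n/\log n)$-point net, w.h.p.\ every subinterval of $[0,20]$ of width $C\log n/n$ (for a large constant $C$) contains at least one of the $m=\Theta(n)$ weights; hence, w.h.p., the $k$-th and $(k+\ell)$-th largest weights differ by $O(\ell\log n/n)$, uniformly in $k$ and $\ell$. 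Second, the $O(\log n)$-per-step deletion budget above, so that at most $2\log^3 n$ edges are deleted over a window. Writing $\rho_s$ for the rank, in decreasing-weight order, of the heaviest edge present in $G_s$ (so $wmax_s$ is the $\rho_s$-th largest weight and $\rho_{\cdot}$ is non-decreasing), it then suffices to prove that w.h.p.\ $\rho$ advances by at most $\log^5 n$ over every window, since the first ingredient then yields $wmax_s-wmax_{s+\log^2 n}=O(\log^6 n/n)\le 4\log^8 n/n$. To bound the advance of $\rho$ one exploits how \gr chooses edges: a greedy step deletes the globally heaviest present edge, advancing $\rho$ by one unless some higher-rank edges are already absent, whereas a dangerous step deletes the currently heaviest edge only when one of its two endpoints — each of degree $\le\log n$ — gets saturated; moreover every step before $\tau_1$ is a dangerous step, because $\z_t>0$ there by the definition of $\tau_1$. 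A probabilistic argument, using that the $\log^5 n$ heaviest edges are sparse and spread out over the $\Theta(n)$ vertices while the non-greedy deletions are blind to weights, shows that w.h.p.\ these heaviest edges all survive until \gr reaches them greedily, after which each greedy step advances $\rho$ by exactly one — giving the required bound. A final union bound over the $O(n/\log^2 n)$ windows, together with Lemmas~\ref{lem:maxdegree} and~\ref{lem:probei}, completes the proof.

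The main obstacle is exactly this last point: controlling the rate at which the heaviest present edge, and its near-rank neighbours in the weight order, can disappear. Greedy picks lower $wmax$ by only one order-statistic gap at a time, but the non-greedy deletions — collateral deletions at saturated vertices, and picks made during dangerous steps — ignore weights and could a priori erode the top of the weight order quickly; ruling this out is the content of the sparseness-and-spread argument for the $\log^5 n$ heaviest edges, and it is where the only real work in the lemma lies.
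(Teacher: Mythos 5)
Your bounds for $\bY$, $\bm$ and $\bW$ match the paper: restrict to the high-probability event $\Delta(G)\le\log n$, feed that into \eqref{eq:bounds} over a window of $\log^2 n$ steps for the first two, and use $w(e_i)\le 20$ for the third. That part is fine.

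The $\bwmax$ bound is where you depart from the paper, and it is also where your argument has a real gap. You propose to track the rank $\rho_s$ of the heaviest surviving edge in the global weight order and to show $\rho$ advances by at most $\log^5 n$ per window. As you note yourself, bounding the advance of $\rho$ is \emph{not} a matter of counting deletions in the window: $\rho_{s+\log^2 n}-\rho_s$ equals the number of edges with weight in $(wmax_{s+\log^2 n},wmax_s]$ deleted at \emph{any} time $\le s+\log^2 n$, so collateral deletions from long ago can make $\rho$ jump arbitrarily within a single window. Your remedy --- show that the ``$\log^5 n$ heaviest edges all survive until hit greedily'' --- is (i) left entirely as a sketch, and (ii) cannot cover the whole range $[\tau_1,\tau_2)$: between $\tau_1$ and $\tau_2$ a $\Theta(1)$ fraction of all edges is deleted, so $\rho$ grows to $\Theta(n)$, and at some point the ``heaviest edge currently present'' sits far below rank $\log^5 n$. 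A claim about the fate of the top $\log^5 n$ edges therefore says nothing about $\rho$ for most of the time window you need.

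The paper avoids all of this history-tracking with a single local observation you are missing: every deletion \gr makes is either weight-blind (dangerous steps and collateral removals at saturated vertices) or removes the current maximum (greedy steps). Consequently, conditioned on $wmax_t$, the weights of the edges still present in $G_t$ are independent $Exp_{\le wmax_t}(1)$ random variables --- no further information about them has leaked out. This turns the whole thing into a fresh binomial calculation at each window start $t$: with $|E(G_t)|\ge\e_1 n$ edges each independently landing in $I=[wmax_t-\Theta(\log^4 n/n),\,wmax_t]$ with probability $\Theta(\log^4 n/n)$, w.h.p.\ $\Omega(\log^4 n)$ of them lie in $I$; since the deletion budget over the next $\log^2 n$ steps is $O(\log^3 n)$, at least one survives, so $wmax$ cannot drop past $I$, which after rescaling by $n/\log^2 n$ is well inside the $4\log^6 n$ slack. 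Your first ingredient (uniform $O(\log n/n)$ gaps between consecutive order statistics) is true but unnecessary once you have the conditional-independence observation, and the global rank-tracking that your second ingredient requires is precisely what that observation lets one avoid.
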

\begin{proof}
In the event $\cE_0$ \eqref{eq:bounds} implies that  $|\bY_{t+1}-\bY_t|,|2m_{t+1}-{2m_t}|\leq 4\log^3 n$ for $t\geq 0$. Thereafter $|\bW_{t+1}-\bW_t\bigg|\leq (\max_{e\in E(G_0)} w(e))\log^2 n\leq 20\log^2 n$. $|\bwmax_{t+1}- \bwmax_t|>4\log^6 n$ only if the weight of fewer than $\log^2 n$  edges in $G_t$ lies in the interval $I=[wmax_t-(4\log^4n)/n,wmax_t]$.
Conditioned on $\bwmax_t$, hence on $wmax_t$, for $e\in E(G_t)$ the weight $w(e)$ is an independent $Exp_{\leq wmax_t}(1)$ random variable, thus it belongs to $I$ independently with probability 
$$q \geq e^{-wmax_t+\frac{4\log^4n}{n}}-e^{-wmax_t} = e^{-wmax_t}(e^{-\frac{4\log^4n}{n}}-1) \geq \frac{e^{-20}\log^4n}{n}.
$$
In addition $G_t$ spans at least $|Y^t|\geq \e_1n$ edges for $t\leq \tau$. Therefore,
$$\Pr(\exists t<T: |\bwmax_{t+1}- \bwmax_t|>4\log^6 n) \leq n  \binom{\e_1n}{\log^2n} (1-q)^{\e_1n-\log^2 n}=o(n^{-1}).$$
\end{proof}

\begin{lemma}\label{lem:trend}[Trend-Hypothesis] For $t<T$ the following hold.
 \begin{align}\label{eq:expchageY}
 \mathbb{E}(\bY_{t+1}-\bY_t|\cH_t,\cE_t)= - \frac{2p_t(2-p_t) }{2-p_t^2}+ O(\epsilon^2_1),
\end{align}
\begin{align}\label{eq:changem_t}
\mathbb{E}(2\bm_{t+1}-2\bm_t|\cH_t) = -\frac{8-4p_t-2p_t^2}{2-p_t^2}+ O(\epsilon^2_1),
\end{align}
\begin{align}\label{eq:changemax}
\mathbb{E}(\bwmax_{t+1}-\bwmax_t|\cH_t) = -\frac{\bigg(e^{\frac{\bwmax_t\log^2 n}{n}}-1\bigg) p_t^2}{ \bfrac{\bm_t\log^2 n}{n}(2-p_t^2)}+ O(\e^2_1),
\end{align}
and
\begin{align}\label{eq:changeWt}
\mathbb{E}(\bW_{t+i}-\bW_t|\cH_t) =\frac{p_t^2}{2-p^2_t} \bfrac{\bwmax_t\log^2 n}{n} +\bfrac{2-2p_t^2}{2-p_t^2}\bigg(1-e^{-\frac{\bwmax_t\log^2 n}{n}}\bigg)^2+ O(\epsilon^2_1).
\end{align}
\end{lemma}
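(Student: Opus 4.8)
The plan is to establish all four estimates in parallel, since they share one skeleton. Fix $t<T$, put $s_0=\tau_1+t\log^2 n$, and telescope each macro-step into micro-steps, e.g. $\bY_{t+1}-\bY_t=\frac1{\log^2 n}\sum_{j=0}^{\log^2 n-1}\big(|Y^{s_0+j+1}|-|Y^{s_0+j}|\big)$, so that it suffices to control the conditional expectation $\mathbb{E}(\,\cdot\mid\cH_{s})$ of each one-step change for $s\in[s_0,s_0+\log^2 n)$ and then sum. Working on $\cE_t$ we have $\Delta(G)\le\log n$ and $\z_s=O(\epsilon_1^{-2}\log n)$, so by \eqref{eq:bounds} the quantities $m_s,|Y_3^s|$, hence $p_s$, move by only $O(\log^3 n/n)=o(1)$ across the macro-step and we may freeze $p_s\equiv p_t$ at additive cost $o(1)$; by \eqref{eq:probabilitiesapprox} every sum over $Y^s$-vertices of degree $\ge4$ or $Z^s$-vertices of degree $\ge3$ is $O(\epsilon_1^2)$; and from $2m_s=\z_s+2|Z_2^s|+3|Y_3^s|+\sum_{i\ge4}i|Y_i^s|+\sum_{i\ge3}i|Z_i^s|$ together with \eqref{eq:probabilitiesapprox} and $\z_s/m_s=o(1)$ we obtain the identity $\tfrac{2|Z_2^s|}{2m_s}=1-p_t+O(\epsilon_1^2)$. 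After these reductions each one-step change becomes an explicit function of the frozen quantities times $\mathbb{I}(\z_s=0)$ or $2-\mathbb{I}(\z_s>0)$, plus a deterministic term, plus $O(\epsilon_1^2)$; summing replaces $\sum_j\mathbb{I}(\z_{s_0+j}>0)$ by $\bz_{s_0}$, and \eqref{eq:changezt} gives $\mathbb{E}(\bz_{s_0}\mid\cH_t,\cE_t)=\tfrac{2(1-p_t^2)}{2-p_t^2}\log^2 n+O(\epsilon_1^2\log^2 n)$, so $\tfrac1{\log^2 n}\sum_j(2-\mathbb{I}(\z_{s_0+j}>0))$ collapses to $\tfrac2{2-p_t^2}$ and $\tfrac1{\log^2 n}\sum_j\mathbb{I}(\z_{s_0+j}=0)$ to $\tfrac{p_t^2}{2-p_t^2}$, leaving only algebra.

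For $\bY$ and $2\bm$ this is immediate from Lemma \ref{lem:onestep}. Substituting $\tfrac{2|Z_2^s|}{2m_s}=1-p_t+O(\epsilon_1^2)$ and the higher-degree bounds into the bracket of \eqref{eq:1expchageY} turns it into $p_t+(1-p_t)p_t+O(\epsilon_1^2)=p_t(2-p_t)+O(\epsilon_1^2)$, so summing and using the collapse above yields $\mathbb{E}(\bY_{t+1}-\bY_t\mid\cH_t,\cE_t)=-p_t(2-p_t)\cdot\tfrac2{2-p_t^2}+O(\epsilon_1^2)$, i.e. \eqref{eq:expchageY}. Likewise the cascade sum in \eqref{eq:1changem_t} equals $2(1-p_t)+O(\epsilon_1^2)$, and carrying the deterministic $-2$ per micro-step through the sum gives $\mathbb{E}(2\bm_{t+1}-2\bm_t\mid\cH_t)=-2-2(1-p_t)\cdot\tfrac2{2-p_t^2}+O(\epsilon_1^2)=-\tfrac{8-4p_t-2p_t^2}{2-p_t^2}+O(\epsilon_1^2)$, i.e. \eqref{eq:changem_t}.

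For $\bwmax$, note $wmax$ changes only when the current heaviest edge of $G_s$ is deleted, and by the description of \gr this occurs — up to the negligible event that a dangerous-step cascade deletes it — exactly at the generic steps $\z_s=0$, where $e_s$ is that heaviest edge. As in the proof of Lemma \ref{boundedness}, conditioned on $\cH_s$ and $wmax_s$ the weights of $E(G_s)$ consist of one edge of weight $wmax_s$ and $m_s-1$ i.i.d. $Exp_{\le wmax_s}(1)$ variables, and the $|R_s|-1$ further edges removed in the cascade are chosen obliviously to weights; hence $wmax_{s+1}$ is the maximum of $\approx m_s$ i.i.d. $Exp_{\le wmax_s}(1)$ variables. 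Since the density of $Exp_{\le C}(1)$ at its right endpoint $C$ is $e^{-C}/(1-e^{-C})$, the standard order-statistic estimate gives $\mathbb{E}(wmax_s-wmax_{s+1}\mid\cH_s)=\tfrac{e^{wmax_s}-1}{m_s}+O(m_s^{-2})$, and non-generic steps contribute $O(m_s^{-2})$ each, negligible over the macro-step. Summing over the $\log^2 n-\bz_{s_0}$ generic steps — across which $wmax$ and $m_s$ drift by only $o(1)$ and $O(\log^2 n)$, so may be frozen at $\bwmax_t\log^2 n/n$ and $\bm_t\log^2 n$ — then rescaling by $n/\log^2 n$ and invoking \eqref{eq:changezt}, produces \eqref{eq:changemax}.

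Finally $W_{s+1}-W_s=w(e_s)$. At a generic step $w(e_s)=wmax_s=\bwmax_t\log^2 n/n+o(1)$, so these steps contribute $\tfrac{p_t^2}{2-p_t^2}\cdot\tfrac{\bwmax_t\log^2 n}{n}+O(\epsilon_1^2)$ after the macro-step sum. The crux, and what I expect to be the main obstacle, is the dangerous-step contribution: one must prove $\mathbb{E}\big(w(e_s)\mid\cH_s,\,\z_s>0\big)=\big(1-e^{-\bwmax_t\log^2 n/n}\big)^2+O(\epsilon_1^2)$. This needs a precise description of how a dangerous vertex and its surviving incident edges arise (as the by-product of a cascade triggered by an earlier generic step) and are then consumed over the ensuing dangerous micro-steps, followed by a verification that — although every individual deletion and random choice inside a dangerous phase is oblivious to weights, with the surviving weights conditionally i.i.d. $Exp_{\le wmax_s}(1)$ given $\cH_s,wmax_s$ — the edge actually appended to $M$ at a dangerous step carries the stated conditional mean, strictly below that of a fresh $Exp_{\le wmax_s}(1)$ because the heaviest surviving edges are preferentially absorbed by generic rather than dangerous steps. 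Granting this identity, summing $w(e_s)=\mathbb{I}(\z_s=0)\,wmax_s+\mathbb{I}(\z_s>0)\,w(e_s)$ over the macro-step, freezing $wmax_s$, and applying \eqref{eq:changezt} to $\sum_j\mathbb{I}(\z_{s_0+j}>0)$ assembles \eqref{eq:changeWt}.
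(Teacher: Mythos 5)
Your treatment of \eqref{eq:expchageY} and \eqref{eq:changem_t} matches the paper's proof: telescoping over the macro-step, freezing $p_t$, using \eqref{eq:probabilitiesapprox} to control the higher-degree tails, and then using \eqref{eq:changezt} to collapse the indicator sums. Your route to \eqref{eq:changemax} is a slight variant --- you do a one-step order-statistics computation at each generic micro-step, whereas the paper argues that $wmax_{t+\log^2 n}$ is roughly the $k$-th largest weight in $E(G_t)$ for $k$ equal to the number of generic micro-steps --- but both give the same estimate, so that part is fine.

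The problem is the fourth equation, and in particular the part you flag as ``the main obstacle.'' It isn't an obstacle: it dissolves once you trust the conditional i.i.d.\ structure you yourself state. Conditioned on $\cH_s$ and $wmax_s$, the weights on $E(G_s)$ are i.i.d.\ $Exp_{\le wmax_s}(1)$, and at a dangerous step $e_s$ is chosen by a rule that never looks at weights; hence $w(e_s)$ is distributed exactly as a fresh $Exp_{\le wmax_s}(1)$ and its conditional mean is the elementary integral
\[
\frac{\int_0^{wmax_s} x e^{-x}\,dx}{1-e^{-wmax_s}}=\frac{1-e^{-wmax_s}(wmax_s+1)}{1-e^{-wmax_s}},
\]
exactly as in the paper's equation \eqref{eq:1changeWt}. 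There is no ``preferential absorption of heavy edges'' effect and no case analysis of how the dangerous vertex arose; claiming the mean is ``strictly below that of a fresh $Exp_{\le wmax_s}(1)$'' directly contradicts your own premise that the surviving weights are conditionally i.i.d.\ $Exp_{\le wmax_s}(1)$ and that dangerous-step selections are weight-oblivious. Moreover the target value you set yourself, $\big(1-e^{-\bwmax_t\log^2 n/n}\big)^2$, is not the mean of $Exp_{\le wmax_s}(1)$ and is not what the paper actually uses downstream: the ODE \eqref{eq:Wt} and the final display in the paper's proof both carry the factor $\frac{1-e^{-maxw}(maxw+1)}{1-e^{-maxw}}$, so the squared-bracket expression in the Lemma's statement \eqref{eq:changeWt} appears to be a typo that you inherited. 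Trying to \emph{prove} that incorrect formula by a subtle conditioning argument is the gap in your proposal; the correct argument for the dangerous-step contribution is a one-line conditional expectation.
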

\begin{proof}

The derivation of \eqref{eq:expchageY}-\eqref{eq:changem_t} is obtain is a manner similar to the derivation of \eqref{eq:changezt} from \eqref{eq:1changez} and using \eqref{eq:changezt} for approximating $\sum_{i=0}^{\log^2 n-1}\mathbb{I}(\z_{t+i}>0)$ by $(2-p^2_t)/(2-p_t^2)$  whenever it appears. For example, \eqref{eq:1expchageY}, \eqref{eq:changezt}, \eqref{eq:bounds}  and \eqref{eq:probabilitiesapprox}  imply,
 \begin{align*}
 \mathbb{E}(\bY_{t+1}-\bY_t|\cH_t,\cE_t)&= 
  - \bigg(2-\frac{2-2p_t^2}{2-p_t^2}\bigg)\bigg( \frac{3 |Y_3^t|}{ 2m_t}+  \frac{2 |Z_i^t|}{ 2m_t}\frac{3 |Y_3^t|}{ 2m_t}\bigg)+ O(\epsilon^2_1),
 \\& =  - \bigg(2-\frac{2-2p_t^2}{2-p_t^2}\bigg)( p_t+(1-p_t)p_t)+ O(\epsilon^2_1)=  - \frac{2p_t(2-p_t)}{2-p_t^2}+ O(\epsilon^2_1). 
\end{align*}
Similarly, \eqref{eq:1changem_t}, \eqref{eq:changezt}, \eqref{eq:bounds}  and \eqref{eq:probabilitiesapprox}  imply,
\begin{align*}
\mathbb{E}(2\bm_{t+1}-2\bm_t|\cH_t,\cE_t) = &-2 - \bigg(2-\frac{2-2p_t^2}{2-p_t^2}\bigg)
\frac{2 |Z_i^t|}{ 2m_t}\cdot 2 + O(\epsilon^2_1)
\\&= -\frac{4-2p_t^2}{2-p_t^2} - \frac{2}{2-p_t^2}(1-p_t)\cdot 2 + O(\epsilon^2_1) =-\frac{8-4p_t-2p_t^2}{2-p_t^2}+ O(\epsilon^2_1).
\end{align*}
For \eqref{eq:changem_t} Lemma \ref{lem:probei} implies that conditioned on $\cH_t,\cE_t$ w.h.p. line 8 of \gr is executed $(1+O(\e_1^2))p_t^2\log^2 n/(2-2p_t^2)$ times during the time interval $[t,t+\log^2 n)$. Thereafter w.h.p. there exists at most two integers $i\in [t,t+\log^2 n)$ such that at step $i$ the lines 5-6 of \gr are executed and an edge whose weight is among the $\log^2 n$ largest is chosen. Hence if  $w(e)=wmax_{t+\log^2n}$ for some edge $e\in E(G_t)$ then $e$ has the $k^{th}$ largest weight among the edges of $E(G_t)$ for some $k=(1+O(\e_1^2))p_t^2\log^2 n/(2-2p_t^2)$ w.h.p. Therefore $b=\mathbb{E}(wmax_{t+\log^2 n}-wmax_t|\cH_t, \cE_t)$ satisfies, 
$$\frac{(1+O(\e_1^2))p_t^2\log^2 n}{m_t(2-2p_t^2)}=\frac{k-1}{m_t}=
\int_{wmax_t-b}^{wmax_t}\frac{e^{-x}}{1-e^{-wmax_t}}dx  = \frac{e^{b}-1}{e^{wmax_t}-1}=\frac{b+O(b^2)}{e^{wmax_t}-1} 
$$ 
and \eqref{eq:changemax} follows. Finally, the expected value of $Exp_{\leq wmax_t}(1)$ is 
$$\frac{\int_{x=0}^{wmax_t}xe^{-x}}{1-e^{-wmax_t}}=\frac{[-xe^{-x}-e^{-x}]_0^{wmax_t}}{1-e^{-wmax_t}}=\frac{1-e^{-wmax}(wmax+1)}{1-e^{-wmax_t}}.$$
Thus,
\begin{align}\label{eq:1changeWt}
\mathbb{E}(W_{t+1}-W_t|\cH_t, \cE_t) =(1-\mathbb{I}(\z_t>0)) wmax_t  +\mathbb{I}(\z_t>0))\cdot \frac{1-e^{-wmax}(wmax+1)}{1-e^{-wmax_t}}+ o(1).
\end{align}
By considering \eqref{eq:changezt} and \eqref{eq:bounds2}, \eqref{eq:changeWt} implies,
\begin{align*}
&\mathbb{E}(\bW_{t+1}-\bW_t|\cH_t) 
\\&=
\mathbb{E}\bigg(\sum_{i=t}^{t+\log^2 n-1}\frac{\mathbb{I}(\z_t=0)}{\log^2 n}wmax_i +\frac{\mathbb{I}(\z_t>0)}{\log^2 n}\frac{1-e^{-wmax}(wmax+1)}{1-e^{-wmax_t}}+ O(\epsilon^2_1) \bigg|  \cH_t,\cE_t\bigg)
\\&= \frac{p_t^2}{2-p^2_t} wmax_t +\bfrac{2-2p_t^2}{2-p_t^2}\frac{1-e^{-wmax}(wmax+1)}{1-e^{-wmax_t}}+ O(\epsilon^2_1)
\\&= \frac{p_t^2}{2-p^2_t} \bfrac{\bwmax_t\log^2 n}{n} +\bfrac{2-2p_t^2}{2-p_t^2} \frac{1-e^{-\bfrac{\bwmax_t\log^2 n}{n}}\bigg(\frac{\bwmax_t\log^2 n}{n}+1\bigg)}{1-e^{-\bfrac{\bwmax_t\log^2 n}{n}}}
+ O(\epsilon^2_1).
\end{align*}
\end{proof}

We are almost ready to apply Theorem \ref{thm:diff}. For that define,
\begin{align*}
    \cD=\bigg\{(t,y,m,maxw,W)&: 0\leq t\leq 1,\hspace{5mm}10^{-13}\leq y\leq 1.1, \hspace{5mm} 2.5y\leq 2m\leq 3.1,
    \\& 0\leq maxw\leq 21 \text{ and } 0\leq W\leq 21\bigg\}.
\end{align*}
Consider the system of differential equations in variable $x \in [0,1]$ with functions $y=y(x),m=m(x), maxw=maxw(x)$ and $W=W(x)$ given by (with $p(x)=3y(x)/2m(x)$)
 \begin{align}\label{eq:Y}
y'(x)= - \frac{2p(x)(2-p(x))}{2-p^2(x)}, 
\end{align}
\begin{align}\label{eq:m}
2m'(x)= -\frac{8-4p(x)-2p^2(x)}{2-p^2(x)},
\end{align}
\begin{align}\label{eq:max}
maxw'(x)= -\frac{(e^{maxw(x)}-1) p^2(x)}{ m(x)(2-p^2(x))},
\end{align}
and
\begin{align}\label{eq:Wt}
W'(x) =  \frac{p^2(x)maxw(x)}{2-p^2(x)} +\bfrac{2-2p^2(x)}{2-p^2(x)} \frac{1-e^{-maxw(x)}(maxw(x)+1)}{1-e^{-maxw(x)}}
\end{align}
with initial condition $$(y(0),m(0),maxw(0),W(0))=(1,1.5,20,0).$$

\begin{lemma}\label{lem:initialconditionstrend}[Initial condition] W.h.p. the following hold.
$$\max\set{\bigg|y(0)-\frac{\bY_0}{\frac{n}{\log^2 n}} \bigg|, \bigg|m(0)-\frac{\bm_0}{\frac{n}{\log^2 n}}\bigg|, \bigg|maxw(0)-\frac{\bwmax_0}{\frac{n}{\log^2 n}}\bigg|, \bigg|W(0)-\frac{\bW_0}{\frac{n}{\log^2 n}}\bigg|} \leq \e_1.$$
\end{lemma}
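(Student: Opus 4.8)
The plan is to verify each of the four initial conditions separately, since $\bY_0$, $\bm_0$, $\bwmax_0$ and $\bW_0$ are all rescaled versions of quantities at time $\tau_1$, the first time $\zeta_t$ hits $0$. By the argument in the proof of Lemma \ref{lem:probei} we have $\tau_1 \leq \max\{200\e_1^{-2}\zeta_0,\log^2 n\} = O(n^{0.95})$ w.h.p., so $\tau_1/n = o(1)$; combined with the boundedness estimate $|Y^{t+1}\setminus Y^t|, |2m_{t+1}-2m_t| \leq 2\Delta(G) \leq 2\log n$ from \eqref{eq:bounds} and $\Delta(G)\leq \log n$ w.h.p., the quantities $|Y^{\tau_1}|$, $m_{\tau_1}$ differ from their initial values $|Y^0|$, $m_0$ by at most $O(\tau_1 \log n) = O(n^{0.95}\log n) = o(n)$. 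Hence after dividing by $\log^2 n$ and then by $n/\log^2 n$ (i.e. dividing $|Y^{\tau_1}|$ and $m_{\tau_1}$ by $n$) we land within $\e_1$ of $|Y^0|/n$ and $m_0/n$ respectively.

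Next I would match these to the stated initial data $(y(0),m(0)) = (1,1.5)$. The input graph to \gr in Lemma \ref{lem:performance} has $|Y_2| = \Theta(n^{0.95})$, $|Y_0\cup Y_1| = O(n^{0.9})$ and the rest in $Y = Y_3$, so $|Y^0| = |Y_3^0| = (1+o(1))n$, giving $|Y^0|/n \to 1$; and $m \leq (1.5+\e_1^3)n$ with $2m \geq 3|Y| = (3+o(1))n$ forces $m_0/n = 1.5 + O(\e_1^3)$, which is within $\e_1$ of $1.5$ for $\e_1$ small. For $\bwmax_0$: all edge weights are i.i.d.\ $Exp_{\leq 20}(1)$, and $G_0$ has $\Theta(n)$ edges, so $\max_{e\in E(G_0)} w(e)$ is within $20$ of $20$ trivially, but more precisely it concentrates near $20$; however we only need $|20 - (n/\log^2 n)\cdot wmax_{\tau_1}/(n/\log^2 n)| = |20 - wmax_{\tau_1}| \leq \e_1$ — wait, the scaling is $\bwmax_0/(n/\log^2 n) = wmax_{\tau_1}$, so we need $wmax_{\tau_1}$ within $\e_1$ of $20$. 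Since there are $\Theta(n)$ edges with weights truncated at $20$ and density $e^{-x}/(1-e^{-20})$ near $20$, the maximum weight among them is $20 - O(\log n / n)$ w.h.p.; and by \eqref{eq:changemax}-type boundedness over $\tau_1 = O(n^{0.95})$ steps, with $wmax$ changing by at most $O(\log^4 n / n)$ per step as in Lemma \ref{boundedness}, $wmax_{\tau_1} = 20 - o(1)$. Finally $\bW_0 = (W_{\tau_1}-W_{\tau_1})/\log^2 n = 0$ by definition, matching $W(0)=0$ exactly.

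The main obstacle I expect is the $\bwmax$ estimate: unlike $Y^t$ and $m_t$, whose per-step changes are $O(\log n)$ deterministically, the maximum edge weight is a random quantity whose concentration near $20$ requires a separate extreme-value argument — showing that among the $\Theta(n)$ initial edges the largest truncated-exponential weight is $20 - o(1)$ w.h.p. (a union-bound/first-moment computation: $\Pr(\text{all weights} \leq 20 - \delta) = (1 - e^{-20+\delta}(\dots)/(1-e^{-20}))^{\Theta(n)}$ is tiny for $\delta = \omega(\log n/n)$), and then arguing that the $O(n^{0.95})$ steps before $\tau_1$ cannot move $wmax$ by more than $\e_1$; the latter follows from the boundedness bound $|\bwmax_{t+1}-\bwmax_t| \leq 4\log^6 n$ established in Lemma \ref{boundedness}, which translates to $|wmax_{t+1}-wmax_t| \leq 4\log^6 n \cdot (\log^2 n/n)/(n/\log^2 n)^{-1}$ — actually $|wmax_{t+1}-wmax_t| \leq (\log^2 n /n)\cdot 4\log^6 n$ per $\log^2 n$-block is negligible over $O(n^{0.95})$ steps. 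I would assemble these four bounds and take a union bound over the four events, each holding w.h.p., to conclude.
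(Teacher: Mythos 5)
Your proof of the first three bounds $\bigl|\,1-|Y^{\tau_1}|/n\,\bigr|$, $\bigl|\,1.5-m_{\tau_1}/n\,\bigr|$ and $\bW_0=0$ is correct and is exactly the paper's argument: on $\cE_0$ one has $\tau_1=O(n^{0.95})$ and $\z_0=O(n^{0.95})$, so \eqref{eq:bounds} with $\Delta(G)\leq\log n$ moves $|Y^t|$ and $m_t$ by $O(n^{0.95}\log n)=o(n)$ over the first $\tau_1$ steps, and the initial data satisfy $|Y^0|=n-O(n^{0.95})$ and $m_0\in[(1.5-o(1))n,(1.5+\e_1^3)n]$, while $\bW_0=(W_{\tau_1}-W_{\tau_1})/\log^2 n=0$ trivially.

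The gap is in the $\bwmax_0$ bound. You appeal to the Boundedness Hypothesis (Lemma \ref{boundedness}) to control the per-step drift of $wmax$ during $[0,\tau_1]$, but that lemma is only stated and proved for the rescaled variables $\bwmax_t$, whose index starts at the original time $\tau_1$; it says nothing about how $wmax$ moves during the dangerous phase $t<\tau_1$. Your unit conversion is also garbled (you write $(\log^2 n/n)/(n/\log^2 n)^{-1}$ and then correct yourself mid-sentence), which suggests you are not actually confident the lemma yields the bound you need in that regime. In fact you do not need any drift control at all: the paper's proof avoids the issue entirely by a direct survival argument. Since each edge weight lies in $[20-\e_1^3,20]$ independently with probability $\Theta(\e_1^3)$ (a positive constant), w.h.p.\ $\Theta(n)$ edges of $G_0$ have weight $\geq 20-\e_1^3$; but by time $\tau_1=O(n^{0.95})$ only $O(\tau_1\Delta(G))=O(n^{0.95}\log n)=o(n)$ edges have been removed from $G_0$. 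Hence at least one edge with weight $\geq 20-\e_1^3$ is still present at time $\tau_1$, giving $wmax_{\tau_1}\geq 20-\e_1^3\geq 20-\e_1$ without tracking any drift. Your extreme-value observation that $wmax_0=20-O(\log n/n)$ is correct but is a red herring: what you need is not that the maximum starts close to $20$, but that a heavy edge survives to $\tau_1$, and that is immediate from counting.
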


\begin{proof}
In the event high probability event, $\cE_0$ \eqref{eq:bounds} gives, 
\begin{align*}
    &\max\set{\bigg|y(0)-\frac{\bY_0}{n/\log^2 n} \bigg|, \bigg|m(0)-\frac{\bm_0}{n/\log^2 n}\bigg|} \leq  \max\set{\bigg|y(0)-\frac{|Y^{\tau_1}|}{n} \bigg|, \bigg|m(0)-\frac{m_0}{n}\bigg|}
    \\&\leq \frac{\z_0 + 4\tau_1 \Delta(G)+\e_1^3n}{n}=\e_1 
\end{align*}
Thereafter $W(0)=\bW_0/(n/\log^2 n)=0$. Finally, $|maxw(0)- \bwmax_0/({n}/{\log^2 n})| \geq \e_1^3$ only if fewer that $\tau_1=O(n^{0.95})$ edges $e$ of $G_0$ have weight $w(e)$ in $[20-\e^3_1,20]$. This event occurs with probability $o(1)$.
\end{proof}

Now, for every $(y,m,maxw,W)\in \cD$ we have that $0\leq 3y/2m \leq 1.1$, $2m \geq 2.5y\geq 10^{-10}$ and $0\leq maxw \leq 21$. Therefore,
$y,m,maxw,W$ are $L$-Lipschitz continuous on $\cD$ for some $L\in[1,\infty)$. Thereafter with $\sigma=1-10^5$
we have that $(t,y_1(t),y_2(t),...,y_k(t))$ has $\ell^\infty$ distance at least $3e^L\e_1\leq 10^{-10}$ from the boundary of $\cD$ for all $t\in (0,\sigma)$ provided that $e_1$ is sufficiently small. Here we are using that $y(x),m(x),maxw(x)$ are decreasing and $y(\sigma'), m(\sigma'), maxw(\sigma') \geq 10^{-9}$ with $\sigma'=1-10^{-6}$ while $W(x)$ is increasing and bounded by $20$ on $[0,1]$.  

Therefore, Theorem \ref{thm:diff} implies that if $\epsilon_1$  is such that $3 e^L \epsilon_1 < 10^{-10}$ and $O(\epsilon_1^2)L^{-1}< \epsilon_1$ (the $O(\e_1^2)$ term corresponds to the maximum error term in the Trend-Hypothesis -see Lemma \ref{lem:trend}) 
then w.h.p.
\begin{equation}\label{eq:estimate}
w(M_{\sigma n}) \geq \alpha n > 1.186n \hspace{5mm} \text{ where }\hspace{5mm} \alpha= W(1-10^{-5})-10^{-10},
\end{equation}
$wmax_{\sigma n} \geq maxw(\sigma)-10^{-10}\geq 1.4$ and $2m_t\leq 2m(\sigma n)+10^{-10} \leq 4\cdot 10^{-5}n$.
Part (b) of lemma \ref{lem:performance} implies that $|M_\tau\setminus M_{\sigma n}|= (10^{-5}+o(1))n$. 
$\beta>0$ is chosen such that if we let $E'$ be a set of $2\cdot 10^{-5}n$ many  $Exp_{\leq 1.4}(1)$ independent random variables then the $0.9\cdot 10^{-5}n$ smallest ones sum to a number larger than $\beta n$ w.h.p.
Thus 
$$w(M_{\tau})\geq w(M_{\tau_2})+ (w(M_\tau)-w(M_{\tau_2}))\geq (\alpha+\beta)n.$$

Unfortunately we were not able to find an analytic solution of the system of differential equations \eqref{eq:Y}-\eqref{eq:Wt}. Instead we solved it and calculated $W(1-10^{-5})$ numerically using Mathematica \cite{mathematica}. The corresponding code is located at Appendix \ref{sec:app:code}.

\subsection{Proof of Lemma \ref{lem:performance} (d)}\label{subsec:(c)}
Let $V_2'$ be the number of vertices that are incident to $Y_2$ via an edge in $M_\tau$. As every vertex in $Y_2$ has exactly $2$ neighbors and those lie in $V_2$, part (b) of Lemma \ref{lem:performance} implies that $|V_2\setminus V_2'|=O(n^{0.9})$. Thereafter for $\tau_1\leq t\leq \tau'$, part  (c) of Lemma \ref{lem:performance} implies that 
$$\Pr(u_t\in V_2')= \frac{2|V_2'\cap Z_2^t|}{2m_t}\geq \frac{0.5}{n^{0.05} \log^{-10}n}.$$
Observe that a path spanned by $M_\tau$ spans a vertex in $Y_2$ if it has an interior vertex in $V_2'$. Now let $\cP$ be the set of paths induced by $M_{\tau}$ of length $n^{0.06}$ with more than $n^{0.06}/2$ edges in $M_{\tau'}$. For $P\in \cP$, $w\in V(P)$ and $t\leq \tau'$ such that $w$ is not an endpoint of $P$ and $w=v_t$ set $t=index(w)$. Then, $\Pr(\exists P\in \cP: V(P)\cap V_2\neq \emptyset)$ is bounded above by,
\begin{align*}
& \sum_{P\in \cP} \prod_{t=1}^n \mathbb{I}(t\in \{index(w):w\in V(P)\}) \Pr\bigg(u_t\notin V_2'\bigg| \wedge_{j\leq t} \{u_t \notin V_2'\cap V(P)\}\bigg)
\\&\leq n \bigg(1-\frac{0.5}{n^{0.05} \log^{-10}n}\bigg)^{\frac{n^{0.06}}{2}-2}\leq n e^{-(0.5+o(1))n^{0.01}\log^{-10} n}=o(1).
\end{align*}
\qed

\section{Proof of theorems \ref{thm:mainmult} and \ref{thm:weakmult}}\label{sec:proofmain}

We start by by proving the following lemma.
\begin{lemma}\label{lem:last}
Let $G\sim G_{n,m}(\emptyset,\emptyset,[n],\emptyset,\emptyset)$ be such that $m\leq (1.5+\epsilon_1^3) n$. Assign to every edge $e$ of $G$ a weight $w(e)\sim Exp_{\leq 20}(1)$. Then w.h.p. $G$ spans a cycle $C$ with weight $w(C)\geq (\alpha+0.5\beta)n$ where $\alpha$ is given by \eqref{eq:estimate} and $\beta>0$.
\end{lemma}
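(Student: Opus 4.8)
The plan is to feed the graph $G$ into the \gr algorithm exactly as set up in Lemma~\ref{lem:performance}, obtain a near-perfect $w$-heavy $2$-matching $M_\tau$ consisting of $O(n^{0.9})$ vertex-disjoint paths, and then stitch these paths into a single long cycle using a small reservoir of edges that we deliberately withhold from $G$ before running the algorithm. First I would split the edge set of $G\sim G_{n,m}(\emptyset,\emptyset,[n],\emptyset,\emptyset)$: peel off a random sparse subset $R$ of edges (of size, say, $\Theta(n^{0.95})$), leaving $G'=G\setminus R$. Choosing $R$ by keeping each edge independently with small probability, one checks that $G'$ has the distribution $G_{n,m'}(Y_1,Y_2,Y,\emptyset,\emptyset)$ (up to the usual conditioning on the degree sequence) with $Y$ the vertices whose degree was untouched, $Y_2$ the vertices that lost exactly one of three incident edges, $Y_1\cup Y_0$ those that lost two or three; a Chernoff/second-moment computation gives $|Y_2|=\Theta(n^{0.95})$ and $|Y_0\cup Y_1|=O(n^{0.9})$ w.h.p., matching the hypotheses of Lemma~\ref{lem:performance}. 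The weights on $E(G')$ are still i.i.d.\ $Exp_{\le 20}(1)$, so Lemma~\ref{lem:performance} applies verbatim.

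The second step uses all four conclusions of Lemma~\ref{lem:performance}. Part~(a) gives $w(M_\tau)\ge(\alpha+\beta)n$; part~(b) gives that $M_\tau$ is a union of $O(n^{0.9})$ paths covering $n-O(n^{0.9})$ vertices; parts~(c) and~(d) are what let the reservoir edges be used to merge paths without destroying much weight. The merging argument: the endpoints of the $O(n^{0.9})$ paths together with the few uncovered vertices form a set $S$ of size $O(n^{0.9})$; since $R$ is a random set of $\Theta(n^{0.95})$ edges on $[n]$, w.h.p.\ the edges of $R$ restricted to $S$ already give us enough connections to patch the paths together — more carefully, one processes the paths one at a time, at each step finding an $R$-edge joining an endpoint of the current path-system to an endpoint of another path; a union-bound over the $O(n^{0.9})$ merge operations, using that each merge fails only if none of $\Theta(n^{0.95})$ random edges lands in a prescribed $\Theta(n^{0.9})\times\Theta(n^{0.9})$ pair of endpoint sets, shows this succeeds w.h.p. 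Each successful merge may delete at most one $M_\tau$-edge per path (to splice at an interior-adjacent endpoint if needed) and adds a reservoir edge; crucially, part~(d) guarantees that every \emph{long} path in $M_\tau$ meets $Y_2$, and part~(c) controls how $Y_2$ thins out, so the handful of edges we sacrifice are spread over long paths and carry only an $o(n)$ fraction of the total weight. This is where the loss $\beta \mapsto 0.5\beta$ is absorbed: we arrange that the weight discarded during merging is at most $0.5\beta n$ w.h.p., so the resulting cycle $C$ (after also deleting $M_\tau$-loops and multi-edges, which number $o(n)$ by Lemma~\ref{lem:loops}-type considerations) satisfies $w(C)\ge(\alpha+\beta)n-0.5\beta n=(\alpha+0.5\beta)n$.

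The remaining bookkeeping is to confirm that after all the patching we genuinely have a single cycle, not a collection of cycles and paths: since we always merge the current path-system's two free endpoints to a \emph{new} path until no paths remain and then close up, connectivity and the cycle structure are automatic, and the uncovered $O(n^{0.9})$ vertices can simply be ignored (they are not required to lie on $C$). One should also note that the weights of the $R$-edges we add are nonnegative, so they only help.

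\textbf{Main obstacle.} The delicate point is the merging step's weight accounting: one must show that the edges deleted during merging carry negligible weight, which is exactly why parts~(c) and~(d) of Lemma~\ref{lem:performance} were stated. The argument is that a merge operation only ever needs to delete an $M_\tau$-edge incident to a \emph{short} path or near an endpoint, and by~(d) short paths are few, while splicing at a long path costs at most one edge of weight $\le 20$ out of $\Omega(n^{0.94})$ such operations — so the total deleted weight is $O(n^{0.9}\cdot 20)=o(n)$, comfortably below $0.5\beta n$. Making the dependency between the (random) set $R$, the weights, and the execution of \gr clean enough to justify the union bound — i.e.\ revealing $R$ only \emph{after} $M_\tau$ is built, so that conditional on $\cH_\tau$ the edges of $R$ are still uniformly random on the appropriate vertex pairs — is the part that needs the most care.
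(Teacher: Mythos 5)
Your first step (withhold a random sparse reservoir $R$ of edges, so that the remaining graph is $G_{n,m'}(Y_1,Y_2,Y,\emptyset,\emptyset)$ with $Y_2$ the vertices that lost exactly one incident edge, then run \gr and invoke Lemma \ref{lem:performance}) matches the paper. The gap is in the merging step. You propose to stitch the $O(n^{0.9})$ paths of $M_\tau$ together one at a time by finding an $R$-edge ``joining an endpoint of the current path-system to an endpoint of another path.'' This cannot work at the stated sparsity. A specific endpoint is a single vertex, and $R$ is a uniformly random set of $\Theta(n^{0.95})$ edges on $[n]$: the probability that a given vertex is incident to \emph{any} edge of $R$ at all is $\Theta(n^{-0.05})$, and the probability that such an edge lands in the $O(n^{0.9})$-vertex endpoint set is a further factor $O(n^{-0.1})$. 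So a given merge step succeeds with probability $O(n^{-0.15})$, and you need $\Theta(n^{0.9})$ of them to succeed; a union bound goes the wrong way. (The calculation ``$\Theta(n^{0.95})$ edges in a $\Theta(n^{0.9})\times\Theta(n^{0.9})$ box'' counts edges anywhere between endpoints, but a greedy merge needs an edge touching the \emph{current} endpoint, which is a much stronger requirement.) Your appeal to parts (c)/(d) to ``splice near the endpoint'' doesn't rescue this, because you still have to land in $Y_2$ near the end, and a single path has too few such vertices near its two ends.

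The paper's actual merging step is structurally different and is where parts (c)/(d) are really used. It first \emph{subdivides} every $M_\tau$-path into roughly $n^{0.905}$ short subpaths of length $n^{0.095}$ (discarding the few that avoid $Y_2$ for a stretch of $n^{0.06}$, which is exactly what part~(d) forbids from happening too often). Each retained short path then has $\Theta(n^{0.03})$ vertices of $Y_2$ near each end, and every $Y_2$-vertex carries a random reservoir edge. This lets one build the Ajtai--Koml\'os--Szemer\'edi auxiliary digraph on these short paths, with an arc $v_{P_1}\to v_{P_2}$ whenever a reservoir edge runs from the last $n^{0.03}$ $Y_2$-vertices of $P_1$ to the first $n^{0.03}$ $Y_2$-vertices of $P_2$. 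The expected in/out-degree is $\Theta(n^{0.01})=\omega(\log n)$, which lets the digraph be coupled with a dense random digraph and hence be Hamiltonian w.h.p.\ (citing \cite{frieze1988algorithm}). A Hamilton cycle in the auxiliary digraph lifts to a cycle $C\subseteq G_{\bx_1}\cup M_\tau$ using a $(1-o(1))$-fraction of $M_\tau$'s edges; since each edge has weight at most $20$, the lost weight is $o(1)\cdot 20n=o(n)<0.5\beta n$, which gives the clean accounting $w(C)\ge (\alpha+\beta)n-o(n)>(\alpha+0.5\beta)n$. Your weight bookkeeping is also hazier than this: it is not enough that only $O(n^{0.9})$ edges are sacrificed, one must know that the resulting object is a single cycle spanning almost all of $M_\tau$, and the subdivision-plus-digraph argument is what delivers that.

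In short: the reservoir-splitting and the invocation of Lemma \ref{lem:performance}~(a)--(d) are right, but the greedy one-at-a-time endpoint merge is the missing idea. You need to break the $M_\tau$-paths into \emph{many short} pieces and use the density of $Y_2$-vertices near each piece's ends to build a Hamiltonian auxiliary digraph; merging at path endpoints alone is far below the connectivity threshold for a reservoir of size $\Theta(n^{0.95})$.
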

\begin{proof}
Let $\bx\in \cS_{n,m}(\emptyset,\emptyset,[n],\emptyset,\emptyset)$, $\bx_1=(\bx_1,\bx_2,....,\bx_{2n^{0.9}})$ and $\bx_2=(\bx_{2n^{0.9}+1},\bx_{2n^{0.9}+2},....,\bx_{2m})$. For $i=0,1,2$ let $Y_i=\{v\in [n]:\bd_{\bx_2}(v)=i\}$ and $Y=[n]\setminus (Y_0\cup Y_1\cup Y_2)$. Then conditioned on $Y_0, Y_1, Y$ and the degree sequence of $G_{\bx_1}$ we have that $\bx_2\in \cS_{n,m}(Y_1,Y_2,Y,\emptyset,\emptyset)$
as there is a $1$-$1$ correspondence between elements of $ \cS_{n,m}(Y_1,Y_2,Y,\emptyset,\emptyset)$ and elements of $ \cS_{n,m}(\emptyset,\emptyset,[n],\emptyset,\emptyset)$ whose degree sequence start with $\bx_1$. Given $d_\bx$, $v\in [n]$ belongs to $Y_i$  only if it has degree $3$ and appears in $d_\bx$ exactly $3-i$ times or it has degree  larger than $3$ and appears in $d_\bx$ more than $3-i$ times. Lemma \ref{lem:maxdegree} states that the maximum degree of $d_\bx$ is $\log n$ w.h.p. while $m\leq (1.5+\e_1^3)n$ implies that at least $(1-\e_1^3)$ vertices have degree $3$ in $d_\bx$ and the set of vertices of degree larger than $3$ is incident to at most $4\e_1^3$ edges. Standard calculations imply that $|Y_2|=\Theta(n^{0.95})$, $|Y_0\cup Y_1|=O(n^{0.9})$ and $G_{\bx_1}$ has at most $o(n^{0.91})$ vertices of degree larger than $1$ w.h.p.  We then execute \gr as described at Section \ref{sec:2greedy} and let $M_\tau$ be the output matching. Lemma \ref{lem:performance} applies thus w.h.p. (I)  $w(M_{\tau})\geq(\alpha+\beta)n$, (II) $M_\tau$ has size $n-n^{0.9}$ and induces $O(n^{0.9})$ components and (III) every path induced by $M_\tau$ of size $n^{0.06}$ that contains at least $0.5n^{0.06}$ edges in $M_{\tau'}$ spans a vertex in $Y_2$.

From each cycle spanned by $M_\tau$ remove an edge to get a set $\cP$ of  $O(n^{0.9})$ paths. Split each path $P$ in $\cP$ into into paths of size $n^{0.095}$, plus possibly a smaller path which we then discard. We also discard any paths that contain a subpath of length $n^{0.06}$ that does not span a vertex in $V_2$. (II) and (III) imply that w.h.p. this gives a set $\cP''$ of $(1+o(n))n^{0.905}$ paths of length $n^{0.095}$ that cover $(1+o(1))$ portion of the edges of $M_\tau$, we assign an arbitrary orientation to each path in $\cP''$. Given $\cP''$ and $G_{\bx_1}$ we create the auxiliary diagraph $G_1$ (also found in \cite{ajtai1981longest}) as follows. $G_1$ has a vertex $v_P$ for each path in $P\in \cP''$. For $P_1,P_2 \in \cP''$ there is an arc from $v_{P_1}$ to $v_{P_2}$ if there is an edge from the last $n^{0.03}$ vertices of $P_1$ that lie in $Y_2$ to the first $n^{0.03}$ vertices of $P_2$ that lie in $Y_2$. 

Observe that for a vertex $v_P\in V(G_1)$ the expected -in degree of $v$ is $(1+o(1))n^{0.03}\cdot n^{0.93}/(2n^{0.95})=(0.5+o(1))n^{0.01}.$ Here  $n^{0.03}$ accounts for the first $n^{0.03}$ vertices of $P$ that lie in $Y_2$,  $2n^{0.95}$ accounts for $|V(G_{X_1})|$ and $(1+o(1))n^{0.93}$ accounts for the number of vertices in $Y_2$ that may be associated with an arc going into $v_P$. It is not hard to show that there exists a coupling of $G_1$ with the random diagraph $D\sim D(n'',p' )$ with parameters $n''=|V(G_1)|$, $p'=\frac{100\log n}{|V(G_1)|}$ such that $D\subset G_1$ w.h.p. Therefore w.h.p. $G_1$ is Hamiltonian \cite{frieze1988algorithm}. A Hamilton cycle in $G_1$ corresponds to a cycle $C$ in $G_{\bx_1}\cup M_\tau$ that spans $(1+o(1))|M_\tau|$ edges of $M_\tau$. As the maximum weight of an edge in $G$ is $20$ we have that $w(C)=w(M_\tau)-o(1)\cdot 20=(\alpha+\beta+o(1))n>(\alpha+0.5\beta)n$.
\end{proof}

\textbf{Proof of Theorem \ref{thm:mainmult}}
Let $G\sim G^\cM(n,n/2+s)$ where $s=s(n)$, $s^3 n^{-2}\to \infty$, $\cK(G)$ be the kernel of $G$,  $C_2(G)$ be the 2-core of the giant component of  $G$ and $n_2(G)$ be the number of vertices of $C_2(G)$. Lemma 2.16 of \cite{frieze2016introduction} implies that there exists $\e_0>0$ such that if $G\sim G^\cM(n,n/2+s)$, $s=s(n) \leq \epsilon_0 n/2$ and $s^3 n^{-2}\to \infty$ then (i) the kernel of $G$, has at most $(1.5+\e_1^3)|V(\cK(G))|$ edges and (ii) if $\gamma\in (0,1]$ is such that $|E(\cK(G))|/\gamma=n_2(C)+|E(\cK(G))|$ then $e^{-20}\geq \gamma$. To each edge $e$ of $\cK(G)$ independently assign a weight $w(e)\sim Exp_{\leq 20}(1)$. Then lemmas \ref{lem:kernel}, \ref{lem:loops}, \ref{lem:transferdistribution}, \ref{lem:maxdegree} and \ref{lem:last} imply that $\cK(G)$ spans a cycle $C$ satisfying $w(C)\geq (\alpha+o(1))|V\cK(G)|$ where $\alpha>1.186$ is given by \eqref{eq:estimate} w.h.p. 
This, together with lemmas \ref{lem:kernel} and \ref{lem:kernelexp1} imply that w.h.p. $G$ spans a cycle of length  
$$ \gamma^{-1}(\alpha+0.5\beta)|V(\cK(G))|= \bigg(1+\frac{n_2(C)}{|E(\cK(G))|}\bigg) (\alpha+0.5\beta)|V(\cK(G))| \geq \frac{\alpha \cdot 16s^2}{3n}$$
given that $\e_0$ is sufficiently small. At the last equality we used that $\beta$ is independent of $\e_0$ and that there exists a function $o(\e_0,n)$ that tends to $0$ as $\e_0\to 0$ and $n\to \infty$ such that 
$n_2(C)=(1+o(\e_0,n))8s^2/n$ and $3|V(\cK(G))|=(2+o(\e_0,n))|E(\cK(G))|=o(\e_0,n)\cdot s^2/n$. 

\textbf{Sketch of the proof of Theorem \ref{thm:weakmult}} Let $G_1\sim G^{\cM}(n,n/2+s-o(n/\log n))$ and $G_2^{\cM}\sim G(n, n/\log_2n)$. Then $G_1\cup G_2\sim G^{\cM}(n,n/2+s)$. To prove Theorem \ref{thm:weakmult} we generate the kernel of $G_1$ and then assign to each edge the same weight. Then, using Lemma \ref{lem:performance2} we find a heavy 2-matching $M$ in $\cK(G_1)$ that does not induced many components. We then proceed as in the proof of Lemma \ref{lem:last} and transform $M$ into a cycle $C$ that spans $(1+o(1))$ portion of the vertices of $\cK(G)$. Finally, by appealing to Lemma \ref{lem:kernel}, one can show that $C$ corresponds to a cycle $C'$ in $G$ which w.h.p. has length  $(1+o(1))(1+n_2(G)/|E(\cK(G_1))|) |V(\cK(G_1))|$. Finally Lemma 2.16 of \cite{frieze2016introduction} implies that $|V(\cK(G))|=(1+o(1))|V(\cK(G_1))|,|E(\cK(G))|=(1+o(1))|E(\cK(G))|$ and $n_2(G)=(1+o(1))n_2(G_1)$ w.h.p.

\section{Concluding Remarks}
In this paper we applied the \gr algorithm to the kernel of $G\sim G(n,c/n),c>1$ in order to construct a small set of $w$-heavy vertex disjoint paths that cover a large portion of the vertices of $G$. By sprinkling some random edges on top of those paths we were able to construct a long cycle. This last part of the argument also implies that the problem of determining $L(G)$ is directly related to the problem of covering the vertices of $G$ with vertex disjoint paths of size  $\omega(n)$ for any function $\omega(n)$ that tends to infinity with $n$. In the regime $c=1+\epsilon$, $\epsilon=o(1)$, removing these paths from the kernel of $G$ leaves a set of edges that is ``close" to a perfect matching. This raises the following question.
\begin{question}
Let $G$ be a graph chosen uniformly from all simple $3$-regular graphs on $[n]$. Assign to every edge $e$ of $G$ a weight $w(e)\sim Exp(1)$ independently. What is the minimum weighted  matching of size $n-n^{0.99}$ of $G$. 
\end{question}
It would also be interesting to examine if our method can be extended to finding long paths in random hypergraphs and improve the corresponding lower bounds, with the first (natural) candidate being loose paths in random $3$-uniform hypergraphs  \cite{cooley2021loose}.

\bibliographystyle{plain}
\bibliography{bib}
\begin{appendix}
\section{Proof of Lemma \ref{lem:loops}}\label{sec:app:lem:loops}
\begin{proof}
Call a cycle of $G$ almost bare if it spans at most $2$ vertices of degree larger than $2$. Note that a loop and a multiple edge of multiplicity $k$ of $\cK(G)$ corresponds to $1$ and $\binom{k}{2}$ respectively almost bare cycles of $G$. The expected number of almost bare cycles of $G$ is 
\begin{align*}
    &\sum_{k\geq 1} \binom{n}{k}\frac{(k-1)!}{2} \binom{k}{2} \frac{    \binom{m}{k}k! [n(n+1)/2- (k-2) (n-2)]^{m-k} }{[n(n+1)/2]^{m}}
    \\ &\leq \sum_{k\geq 1} \frac{k}{4} \prod_{i=0}^k\frac{(n-i)(m-i)}{0.5n(n+1)}
    \bigg(1-\frac{(k-2) (n-2)}{n(n+1)/2}\bigg)^{m-k}
    \\&\leq \sum_{k\geq 1} \frac{(1+o(1))km^2}{n^2} \bfrac{(2+o(1))m}{n}^{k-2} e^{-2\sum_{i=0}^k\frac{i}{n}+\frac{i}{n}}
\cdot e^{-\frac{(2+o(1))(k-2)(m-k)}{n}} 
\\& \leq \sum_{k\geq 1} \frac{(4+o(1))km^2}{n^2} \bfrac{(2+o(1))me^{-\frac{2m}{n}}}{n}^{k-2}
\leq \sum_{k\geq 1} \frac{(4+o(1))km^2}{n^2} \cdot 0.4^{k-2}=O(1).
\end{align*}
Markov's inequality implies that $G$ spans at most $f(n)$ loops and multiple edges w.h.p. Hence w.h.p. $\cK(G)$ spans at most $f(n)$ loops and multiple edges.

$\cK(G)$ spans an edge of multiplicity at least 3 only if $G$ spans a pair of vertices that are joined by $3$ paths whose internal vertices are distinct  and have degree 2. Thereafter $\cK(G)$ spans a loop of multiplicity larger than 2 only if $G$ spans a pair of almost bare cycles that have a single vertex in common. Similarly to the calculations above we have that w.h.p. 
$\cK(G)$ does not span an edge of multiplicity larger than 2 or a loop of multiplicity larger than 1 w.h.p.
\end{proof}

\section{Proof of Lemma \ref{lem:kernelexp1}}\label{sec:app:lem:kernelexp1}
\begin{proof}
Define $\g'$ by $(m'/\gamma)-(m'/\gamma)^{2/3}=(m'/\gamma')$. Let $q_i=1$  for $i> C$ and 
\[q_i=\frac{\Pr(Exp_{\leq C}(1)\in (i\gamma'-\gamma' ,i\gamma'])-\Pr(Geom(\g')=  i  )}{\Pr(Exp(1)_{\leq C}\in (i\gamma'-\gamma', i\gamma])}\]
for  $i\in \{1,2,..., C \rfloor\}$. Note that for $i\leq  C $ we have that 
\begin{align*}
    \frac{\Pr(Exp_{\leq C}(1)\in (i\gamma'-\gamma' ,i\gamma'])}{\Pr(Geom(\g')=  i  )} = \frac{\frac{e^{-(i-1)\g'}(1-e^{-\g'})}{1-e^{-C}}}{\g'(1-\g')^{i-1 }} \geq \frac{\frac{e^{-(i-1)\g'}(1-(1-\g'+(\g')^2))}{1-e^{-C}}}{\g'e^{-(i-1)\g}} =\frac{(1-\g)}{1-e^{-C}}\geq 1.
\end{align*}
Hence $q_i\in [0,1]$ for $i\in \mathbb{N}^+$. For $i\in [m']$ generate $Y_i$ and let
\begin{align*}
 X_i'=\begin{cases} 
 \lceil Y_i /\g' \rceil &\text{with probability } 1-q_{\lceil Y_i /\g' \rceil}, 
 \\ Geom_{>  C }(\g') &\text{otherwise}.
 \end{cases}
\end{align*}
Here by $Geom_{>  C  }(\g')$ we denote the $Geom(\g)$ random variable that is conditioned to be larger than $ C $. Note that by construction $X_i'$ are distributed as independent $Geom(\g')$ random variables. In addition $X_i'\geq Y_i$ for $i\in [m']$. Furthermore,  if $1-\Pr(\sum_{i\in [m']} X_i' \leq m'/\gamma)=o( \Pr(\sum_{i\in [m']} X_i = m'/\gamma) )$ then $X_1',X_2',...,X_{m'}'$ and $X_1,X_2,...,X_{m'}$ can be coupled such that $ X_i' \leq  X_i$ for $i\in[m]$ w.h.p. By linearity we have that $\sum_{i\in [m']} X_i'$ has mean $m'/\gamma'$ and variance $(1-\gamma')m'/(\gamma')^2$. Thus, Chebyshev inequality gives,
\[\Pr\bigg(\sum_{i\in [m']} X_i'> \frac{m'}{\g} \bigg)\leq \frac{\big(\bfrac{m'}{\gamma}^{2/3}\big)^2}{\frac{(1-\gamma')m'}{(\gamma')^2}} \leq 2 m^{2/3}\g^{2/3}.\]
In addition, 
\[\Pr\bigg(\sum_{i\in [m']} X_i=k \bigg) = \binom{k-1}{m'-1} (1-\g)^{k -m'} (\g')^{m'}\]
Thus both the mode and the mean of $\sum_{i\in [m']} X_i$ lie in the interval $[\lfloor m'/\g^{-1}\rfloor,\lfloor m'/\g^{-1}\rfloor+1]$ and by Chebyshev inequality we get that  
\[\Pr\bigg(\sum_{i\in [m']} X_i= m'/\gamma\bigg)\geq \frac{ \Pr\bigg(\bigg|\sum_{i\in [m']} X_i- \frac{m'}{\gamma}\bigg|\leq 10\bfrac{m'}{\gamma}^{0.5}\bigg)}{20\bfrac{m'}{\gamma}^{0.5}}= \Omega(m^{-0.5}\g^{0.5})=\omega(m^{-2/3}\g^{2/3}),\] 
as $\lim_{n\to \infty} \g^{-1}(n)m'(n)=\infty$.
\end{proof}

\section{The degree distribution of random multigraphs with conditioned degree sequence}\label{sec:app:config}
We start by giving a second way of generating $G\sim G_{n,m}(Y_1,Y_2,Y,Z_1,Z)$. For $i\in [n]$ and $\lambda>0$ let $d(i)$ be a random variable generated as follows,
\begin{align}\label{eq:degreeslambda}
    d(i)=
    \begin{cases}
    1&\text{ if }i\in Y_1\cup Z_1,
\\    2&\text{ if }i\in Y_2,
 \\   Po_{\geq 2}(\lambda)&\text{ if } i\in Z,
 \\   Po_{\geq 3}(\lambda)&\text{ if } i\in Z,
 \\ 0&\text{ otherwise.}
    \end{cases}
\end{align}
Here by $Po_{\geq 2}(\lambda)$ and $Po_{\geq 3}(\lambda)$ we denote the Poisson random variables with mean $\lambda$ condition on being at least $2$ and $3$ respectively. So for example $\Pr(Po_{\geq 2}(\lambda)=k)=\lambda^k/(k!(e^\lambda-1-\lambda))$ for $k\geq 2$. We let $ G_{n,m}'(Y_1,Y_2,Y,Z_1,Z)$ be the random graph that is obtained by first generating the decree sequence $\{d(i)\}_{i\in [n]}$ as above, accepting it if the sum of the degrees is $2m$ and then generating a random graph with degree sequence $\{d(i)\}_{i\in [n]}$ using the configuration model. That is, for $i\in [n]$ let $S_i$ be a set of $d(i)$ distinguishable points. Then generate a random pairing of the elements in the multiset $\cup_{i\in [n]}S_i$ and set $G$ be the graph on $[n]$ such that for $i,j\in [n]$ the multiplicity of the edge $ij$ in $G$ is equal to the number of pairs with elements in $S_i\cup S_j$, at least one from each set.

Now let $G$ be a multigraph on $[n]$ with $2m$ edges, $l$ loops, $n_k$ edges/loops of multiplicity $k$,$k\geq 2$ whose degree sequence belongs to $\cS_{n,m}(Y_1,Y_2,Y,Z_1,Z)$ and $G'\sim G_{n,m}'(Y_1,Y_2,Y,Z_1,Z)$. Then with $V=Y_1\cup Y_2 \cup Y\cup Z_1\cup Z$,
\begin{align*}
   \Pr(G'=G)&=C \prod_{v\in Y}\Pr(d_{G'}(v)=d_{G}(v))  \prod_{v\in Z}\Pr(d_{G'}(v)=d_{G}(v)) \frac{\prod_{v\in V} d_{G}(v)!}{2^{l}\prod_{k\geq 2} (k!)^{n_k}} \nonumber
   \\& =C \prod_{v\in Y} \frac{\la^{d_G(v)} }{d_G(v)!(e^\la-1-\la-0.5\la^2)}
   \prod_{v\in Z}  \frac{\la^{d_G(v)} }{d_G(v)!(e^\la-1-\la)} \frac{\prod_{v\in V} d_G(v)!}{2^{l}\prod_{k\geq 2} (k!)^{n_k}} \nonumber
\\& =C \frac{ \la^{2m-2|Y_2|-|Y_1|-|Z_1|}\prod_{v\in Y_1\cup Y_2\cup Z_1} d_G(v)!}{(e^\la-1-\la-0.5\la^2)^{|Y|}(e^\la-1-\la)^{|Z|} 2^{l}\prod_{k\geq 2} (k!)^{n_k}}  \nonumber
 =\frac{C'}{2^{l}\prod_{k\geq 2} (k!)^{n_k}}.
\end{align*}
At the calculations above $C$ equals to $\Pr(\sum_{i\in [n]} d(i)=2m)$ over the number of matchings on $[2m]$ and $C'$ is a constant that depends only on $n,m,|Y_1|,|Y_2|,|Y|,|Z_1|$ and $|Z|$. The last expression is also equal to the probability $G$ is assigned by the $G_{n,m}(Y_1,Y_2,Y,Z_1,Z)$ model as there are $m!2^m/(2^{l}\prod_{k\geq 2} (k!)^{n_k})$ sequences $\bx\in \cS_{n,m}(Y_1,Y_2,Y,Z_1,Z)$ such that $G=G_\bx$. 

We now prove Lemma \ref{lem:maxdegree}. Recall it states the following. 

\begin{lemma}
Let $Y_1, Y_2,  Y, Z_1, Z$ be pairwise disjoint subsets of $[n]$ and $m$ be such that  $|Y_1|+2|Y_2|+3|Y|+|Z_1|+2|Z|\leq 2m =O(n)$. Let $G\sim G_{n,2m}(Y_1,Y_2,Y,Z_1,Z)$. Then w.h.p. $\Delta(G)\leq \log n$.  Thereafter if $Y=[n]$ then $G$ is connected w.h.p. 
\end{lemma}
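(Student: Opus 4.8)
The plan is to prove the two assertions separately, in both cases passing through the alternative description $G'\sim G_{n,m}'(Y_1,Y_2,Y,Z_1,Z)$ via the configuration model, which is legitimate because the computation preceding this lemma shows that $G_{n,m}(Y_1,Y_2,Y,Z_1,Z)$ and $G_{n,m}'(Y_1,Y_2,Y,Z_1,Z)$ assign the same measure to every fixed multigraph with a degree sequence in $\cS_{n,m}^\cD(Y_1,Y_2,Y,Z_1,Z)$. So it suffices to work with $G'$.

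First I would handle the maximum degree. Here I would condition on the event $A=\{\sum_{i\in[n]}d(i)=2m\}$, which has probability $\Omega(n^{-1/2})$ (it is the event that a sum of $n$ independent, uniformly-bounded-variance random variables hits its mean, and a local central limit estimate — or the explicit mode/mean argument used in the proof of Lemma \ref{lem:kernelexp1} — gives $\Pr(A)=\Omega(n^{-1/2})$). Before conditioning, the $d(i)$ are independent, with $d(i)$ either deterministic ($\le 2$) or a truncated Poisson $Po_{\ge 2}(\lambda)$ or $Po_{\ge 3}(\lambda)$ with $\lambda=O(1)$ (since $2m=O(n)$ forces $\lambda$ bounded, as $\sum_i\E d(i)=2m$). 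A single such variable satisfies $\Pr(d(i)\ge k)\le \lambda^k/k! \le (e\lambda/k)^k$, so $\Pr(d(i)\ge \tfrac{1}{2}\log n)=n^{-\omega(1)}$; a union bound over $i\in[n]$ gives $\Pr(\Delta\ge \tfrac12\log n \text{ before conditioning})=n^{-\omega(1)}$, and dividing by $\Pr(A)=\Omega(n^{-1/2})$ still leaves $o(1)$. Finally, passing from the degree sequence to the multigraph via the configuration model can only decrease the maximum degree (loops and parallel edges merge half-edges into fewer incident edges), so $\Delta(G')\le \tfrac12\log n < \log n$ w.h.p. The transfer back to $G_{n,2m}(Y_1,Y_2,Y,Z_1,Z)$ is immediate from the distributional identity above.

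For connectivity when $Y=[n]$: now every vertex has degree $\ge 3$, and by the above we may also condition on $\Delta(G')\le \log n$. I would use the standard first-moment argument that a random multigraph with minimum degree $3$ and bounded maximum degree has no small separating vertex set / no small component: if $S$ is the vertex set of a union of components with $|S|=k\le n/2$, then all at least $3k$ half-edges out of $S$ must be paired within $S$, and a union bound over choices of $S$ and over pairings shows $\Pr(\exists \text{ such } S)\le \sum_{k}\binom{n}{k}\big(\tfrac{3k}{2m-3k}\big)^{3k/2}$ (roughly), which is $o(1)$ once we absorb the $\Omega(n^{-1/2})$ cost of conditioning on $A$; the $k=O(1)$ terms are controlled because minimum degree $3$ makes a component need $\ge 4$ vertices and the relevant probability is $O(n^{-2})$ per fixed small set, and the mid-range terms decay geometrically. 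The key quantitative input is that $2m\ge \tfrac32 n$ while the boundary exponent $3k/2$ beats the entropy $\binom nk$. This last first-moment estimate is the main obstacle: one must be careful with the $k=O(1)$ range (where $\binom nk$ is polynomial and one needs the per-set probability to be $o(n^{-\text{const}})$ using minimum degree $3$ and no multi-edges of multiplicity exceeding $2$) and with the conditioning on $A$, whose $\Omega(n^{-1/2})$ probability must be dominated by the failure probability; after that the argument is routine.
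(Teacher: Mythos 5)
Your proposal follows essentially the same path as the paper: for the maximum degree you use the independent truncated-Poisson degree model, a union bound and the $\Omega(n^{-1/2})$ lower bound on hitting $\sum d(i)=2m$ (the paper writes exactly this, getting $n\cdot O(n^{1/2})\cdot\lambda^{\log n}/(\log n)!\,=o(1)$); for connectivity you use the same configuration-model first-moment count over sets $S$ whose half-edges all pair internally, with the entropy $\binom{n}{s}$ beaten by the pairing probability because the minimum degree forces $d_S\geq 1.5s$. One small caution: your parenthetical justification for the $k=O(1)$ range is not quite right in the multigraph setting — a component of minimum degree $3$ need not have $\geq 4$ vertices (two vertices joined by a triple edge, or two vertices with a loop each and one joining edge, are components), and the lemma does not condition on multiplicities being at most $2$. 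This is harmless, though: $k=1$ is impossible because $3$ is odd, and for $k=2,3$ the pairing probability is already $O(n^{-3})$ against $O(n^2)$ or $O(n^3)$ choices of $S$, so the first-moment bound you write down handles the small range correctly without that extra structural claim — the paper's sum simply starts at $s=1$ and lets the bound $\binom{n}{s}\cdot\frac{(2d_s)!/(d_s!2^{d_s})\cdot(2(m-d_s))!/((m-d_s)!2^{m-d_s})}{(2m)!/(m!2^m)}$ do the work.
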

\begin{proof}
Let $\la$ be chosen such that 
\begin{equation*}
    |Y|\mathbb{E}(Po_{\geq 3}(\la)) +|Z|\mathbb{E}(Po_{\geq 2}(\la)) =2m-|Y_1|-2|Y_2|-|Z_1|.
\end{equation*}
It is standard to show that with $d(i)$ defined by \eqref{eq:degreeslambda} one has, $\Pr(\sum_{v\in [n]} d(v)=2m)=\Omega(n^{-0.5})$. Therefore,
\begin{align*}
     \Pr(\Delta(G)\geq \log n)
     &\leq n\Pr\bigg(Po_{\geq 3}(\lambda)\geq \log n\bigg| \sum_{v\in [n]} d(v)=2m \bigg)\leq \frac{O(n^{1.5}) \la^{\log n}}{(\log n)!(e^\la-1-\la-0.5\la^2)}=o(1).
\end{align*}
Now assume that $Y=[n]$. Then $G$ has minimum degree $3$ and maximum degree $\log n$ w.h.p. Conditioned on the degree sequence $\bd$ of $G$ and the event $\Delta(G)\leq \log n$,  $G$ is not connected with probability at most,
\begin{align*}
    &\sum_{1\leq s\leq n-\frac{n}{2\log n}} \sum_{ 0.5m\geq d_s\geq 1.5s} \binom{n}{s} \bfrac{\frac{(2d_s)!}{d_s! 2^{d_s}}\frac{(2(m-d_s))!}{(m-d_s)! 2^{m-d_s}}}{\frac{(2m)!}{m! 2^m}}=     \sum_{1\leq s\leq n-\frac{n}{2\log n}} \sum_{ 0.5m\geq d_s\geq 1.5s} \binom{n}{s} \prod_{i=1}^{d_s}\frac{2d_s-2i+1}{2m-2i+1}
    \\& \leq  \sum_{1\leq s\leq n-\frac{n}{2\log n}} \sum_{ 0.5m\geq d_s\geq 1.5s}
    \prod_{i=0}^{s-1}\frac{n-i}{s-i}
 \prod_{i=0}^{d_s-1}\frac{ d_s-i }{m-i}
 \leq   \sum_{1\leq s\leq n-\frac{n}{2\log n}} \sum_{ 0.5m\geq d_s\geq 1.5s}
 \prod_{i=s}^{d_s-1}\frac{ d_s-i }{m-i}
     \\& \leq   \sum_{1\leq s\leq n-\frac{n}{2\log n}} \sum_{ 0.5m\geq d_s\geq 1.5s}
 \prod_{i=s}^{d_s-1}\bfrac{ s}{0.5m}^{d_s-s}=o(1).
\end{align*}
Thus $G$ is connected w.h.p.
\end{proof}
By using the Chernoff bound to establish concentration on the number of   $Po_{\geq i}(\lambda),\la=O(1)$ independent random variable that are equal to $k\geq i$ among $\ell$ of them one can prove the following lemma.
\begin{lemma}\label{lem:degDistributions}[Lemma 2.2 of \cite{anastos2021k}]
Let $m=\Theta(n)$ and assume that $2m >  |Y_1|+2|Y_2|+3|Y|+|Z_1|+2|Z|$.
Let $G\sim G_{n,2m}(Y_1,Y_2,Y_3,Z_1,Z_2,X)$. Let $z_i$ and $y_i$ be the number of vertices in $Z$ and $Y$ respectively of degree $i\geq 2$ and $i\geq 3$ respectively. 
Let $\lambda$ be the unique positive real number that satisfies 
\begin{equation}\label{eq:lambda}
    |Y|\mathbb{E}(Po_{\geq 3}(\la)) +|Z|\mathbb{E}(Po_{\geq 2}(\la)) =2m-|Y_1|-2|Y_2|-|Z_1|.
\end{equation}
Then w.h.p.,
$$\bigg|z_i-   \frac{ \lambda^i |Z|}{i! (e^\la-1-\la)} \bigg| \leq m^{0.6}  \text{ for } i\geq 2$$
and
$$\bigg|y_i-  \frac{ \lambda^i |Y|}{i! (e^\la-1-\la-0.5\la^2)} \bigg| \leq m^{0.6}  \text{ for } i\geq 3.$$
\end{lemma}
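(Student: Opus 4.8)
The plan is to pass to the equivalent generation procedure $G'_{n,2m}(Y_1,Y_2,Y,Z_1,Z)$ described at the beginning of this appendix, whose distribution we have already shown coincides with that of $G_{n,2m}(Y_1,Y_2,Y,Z_1,Z)$. Under that procedure the degree sequence of $G$ is distributed exactly as the vector $\{d(v)\}_{v\in[n]}$ from \eqref{eq:degreeslambda}, conditioned on the event $\cA=\big\{\sum_{v\in[n]}d(v)=2m\big\}$. The defining relation \eqref{eq:lambda} for $\lambda$ was chosen precisely so that $\mathbb{E}\big(\sum_{v\in[n]}d(v)\big)=2m$, so $\cA$ is the typical value of the sum; moreover $m=\Theta(n)$ together with the degree constraint forces $\lambda=O(1)$.

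First I would prove concentration of $y_i$ and $z_i$ \emph{before} conditioning on $\cA$. For $i\ge 3$, $y_i=\sum_{v\in Y}\mathbb{I}(d(v)=i)$ is a sum of $|Y|$ independent Bernoulli variables with success probability $\Pr(Po_{\ge 3}(\lambda)=i)=\lambda^i/\big(i!(e^\lambda-1-\lambda-0.5\lambda^2)\big)$, hence its unconditional mean is exactly the target quantity $\lambda^i|Y|/\big(i!(e^\lambda-1-\lambda-0.5\lambda^2)\big)$; similarly $z_i=\sum_{v\in Z}\mathbb{I}(d(v)=i)$ has mean $\lambda^i|Z|/\big(i!(e^\lambda-1-\lambda)\big)$. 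Since these means are $O(n)$ and $m=\Theta(n)$, the Chernoff bound gives $\Pr\big(|y_i-\mathbb{E}y_i|>\tfrac12 m^{0.6}\big)$ and $\Pr\big(|z_i-\mathbb{E}z_i|>\tfrac12 m^{0.6}\big)$ at most $\exp(-\Omega(m^{0.2}))$ for each fixed $i$ (the lower tail being trivial when the mean is below $\tfrac12 m^{0.6}$).

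Next I would transfer this to the conditional law. Exactly as in the proof of Lemma \ref{lem:maxdegree}, a one-dimensional local limit estimate for the sum of the independent, uniformly-bounded-variance, span-one variables $d(v)$ yields $\Pr(\cA)=\Omega(n^{-1/2})$; hence conditioning on $\cA$ inflates each of the failure probabilities above by at most a factor $O(n^{1/2})$, leaving $\Pr\big(|y_i-\mathbb{E}y_i|>\tfrac12 m^{0.6}\mid\cA\big)$ and the analogous quantity for $z_i$ still $\exp(-\Omega(m^{0.2}))$. Finally I would union bound over $i$: by Lemma \ref{lem:maxdegree} we have $\Delta(G)\le\log n$ w.h.p., so it suffices to control $y_i$ for $3\le i\le\log n$ and $z_i$ for $2\le i\le\log n$ — for $i>\log n$ both $y_i=z_i=0$ and, as $\lambda=O(1)$ while $i!\ge(\log n)!$ is superpolynomial, the corresponding target quantities are $o(1)<m^{0.6}$, so the claimed inequality holds automatically. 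Since $\log n\cdot\exp(-\Omega(m^{0.2}))=o(1)$, this completes the proof.

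The only genuinely delicate ingredient is the lower bound $\Pr(\cA)=\Omega(n^{-1/2})$; this is a standard local central limit theorem for a sum of independent integer-valued random variables with bounded variances and unit span, and is the same estimate already invoked (and there described as standard) in the proof of Lemma \ref{lem:maxdegree}. Everything else is routine Chernoff-plus-union-bound bookkeeping.
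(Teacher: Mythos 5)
Your proof is correct and follows exactly the route the paper sketches. The paper does not actually prove this lemma itself --- it cites it from \cite{anastos2021k} and only gives a one-line Chernoff hint --- but your argument (identify the degree sequence with independent conditioned-Poisson variables via the $G'$ model, apply Chernoff to each $y_i$, $z_i$, transfer to the conditional measure using the local-CLT bound $\Pr(\cA)=\Omega(n^{-1/2})$ already invoked in the proof of Lemma~\ref{lem:maxdegree}, then union-bound over $i\le\log n$ using the maximum-degree bound) is precisely the standard way of filling in that sketch.
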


\section{Proof of parts (b)-(c) of Lemma \ref{lem:performance}}\label{sec:app:auxlemmas}
We start by restating Lemma \ref{lem:performance}.

\begin{lemma} 
Let  $Y_0\sqcup Y_1\sqcup Y_2 \sqcup Y$ be a partition of $[n]$ such that $|Y_2|=\Theta(n^{0.95})$, $|Y_0\cup Y_1|=O(n^{0.9})$ and $2m \geq |Y_1|+2|Y_2|+ 3|Y|$. Let $G\sim G_{n,m}(Y_1,Y_2,Y,\emptyset,\emptyset)$ and $V_2$ be the set of neighbors of $Y_2$ in $G$. To each edge $e\in E(G)$ assign a weight $w(e)\sim Exp_{\leq 20}(1)$. Then execute \gr with input $G,w$ and let $M_i,i\geq 0$ be the generated $2$-matchings, $M=M_\tau$ the output matching and $\tau'=(1-\log^{-1}n)n$. There exists $0<\epsilon_1<0.01$ such that if $ m \leq (1.5+\epsilon_1^3)n$ then w.h.p. the following hold.
\begin{itemize}
    \item[(a)] $\sum_{e\in M_\tau} w(e)\geq (\alpha+\beta)n$ where $\alpha>1.186$ is derived from a system of differential equations (see equations \eqref{eq:Y}-\eqref{eq:Wt} and \eqref{eq:estimate}) and $\beta>0 $ is a sufficiently small constant that is independent of $\alpha$.
    \item[(b)] $M_\tau$ has size $n-O(n^{0.9})$ and spans $O(n^{0.9})$ components.
    \item[(c)] $|V_2\cap V(G_t)| \geq 0.5(1-t/n)^{10}|V_2|$ for $t\leq \tau'$.
    \item[(d)] Every path spanned by $M_{\tau}$ of length at least $n^{0.06}$ with more than $n^{0.06}/2$ edges in $M_{\tau'}$ spans a vertex in $Y_2$.
\end{itemize}
\end{lemma}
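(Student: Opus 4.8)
The plan is to prove parts~(b) and~(c); parts~(a) and~(d) were already handled in subsections~\ref{subsec:(a)} and~\ref{subsec:(c)}. Throughout I would use the Markovian description of the graph sequence (Lemma~\ref{lem:uniformity}), the bound $\Delta(G)\le\log n$ (Lemma~\ref{lem:maxdegree}), the estimates $\tau_1=O(n^{0.95})$ and $\zeta_t=O(\log n)$ for $t\in[\tau_1,\tau_2]$ from Lemma~\ref{lem:probei}, and the crude one-step bounds \eqref{eq:bounds}.

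For part~(b), I would start from the structural identity that, since $(V,M_\tau)$ is a vertex-disjoint union of paths, cycles and isolated vertices, $|M_\tau|=n-\tfrac12\Delta^\ast$ and the number of components equals $\tfrac12\Delta^\ast+\#\{\text{cycles of }M_\tau\}$, where $\Delta^\ast:=\sum_v(2-d_{M_\tau}(v))\ge 0$. Hence it suffices to show that w.h.p.\ $\Delta^\ast=O(n^{0.9})$ and $M_\tau$ has $O(n^{0.9})$ cycles. I would bound $\Delta^\ast$ by tracing each vertex. At most $2|Y_0\cup Y_1|=O(n^{0.9})$ units of deficiency come from vertices starting in $Y_0\cup Y_1$; every other vertex starts in $Y_2$ or $Y$, and the key point is that such a vertex ends doubly covered \emph{unless} it is hit by a ``forced'' line-12 deletion of \gr more than the single time that can be absorbed. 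Indeed, a vertex of $Y^t$ that loses one edge at line~12 drops into $Y_2^t\cup Z^t$ and hence, after at most one more step, into $D^t$, whereupon the $D$-rule matches it; only a second such hit before selection, a hit on a vertex already in $Y_2^0$, or a hit on a once-matched vertex with no surviving edge, costs a unit of deficiency. Call these \emph{costly events}; each costs $O(1)$, so it remains to count them. Here I would split at $\tau_1$: since a vertex is saturated only as one of $v_t,u_t$, and $v_t\in D^t$ always has degree $\le 2$ while $u_t$ is high-degree only with probability $O(\epsilon_1^3)$, w.h.p.\ there are $O(n^{0.95})$ line-12 deletions during $[0,\tau_1]$, and since each lands on a fixed vertex with probability proportional to its degree and $Y_2^0$ has total degree $\Theta(n^{0.95})$ out of $2m=\Theta(n)$, only $O(n^{0.9})$ of them hit $Y_2^0$ (the ``two hits on the same vertex'' coincidence contributing even less); for $t\ge\tau_1$ the endangered vertices lie in $D^t$, of size $O(\log n)$, so evaluating the relevant probabilities as configuration-count ratios via Lemma~\ref{lem:uniformity} the conditional probability of a costly event at step~$t$ is $O(\log^4 n/m_t)$, which, summed over $t$ using $m_t\ge\tau-t$, contributes $o(n^{0.9})$. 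An Azuma bound with the $O(\log n)$ increments of \eqref{eq:bounds} upgrades these to w.h.p.\ statements, and the same coincidence argument bounds the cycles, since a cycle of $M_\tau$ is created only when an added edge joins the two ends of a single path-component, the number of path-components never exceeds $\tfrac12\Delta^\ast=O(n^{0.9})$, and the selected neighbour $u_t$ equals that particular endpoint only with probability $O(\log n/m_t)$. (Lemma~\ref{lem:performance2} is obtained by exactly this argument with constant weights, which is why its proof is omitted.) I expect the first regime to be the main obstacle: although $\zeta_0=\Theta(n^{0.95})$ is large, essentially all of that danger sits on $Y_2$-vertices that resolve cleanly, and one must argue that $\Theta(n^{0.95})$ units of danger produce only $O(n^{0.9})$ actual losses.

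For part~(c), put $N_t:=|V_2\cap V(G_t)|$, so that $N_0=|V_2|\le 2|Y_2|=O(n^{0.95})$. A vertex of $V_2$ leaves $G_t$ at step~$t$ only if it is $v_t$ or $u_t$ and becomes saturated, or if all its surviving $G_t$-neighbours are deleted there. By Lemma~\ref{lem:uniformity} the conditional probability that a fixed $v\in V(G_t)$ is $v_t$ or $u_t$ is $O(d_t(v)/m_t)$ plus the contribution $\mathbb I(v\in D^t)/|D^t|$ of the $D$-rule, and a cascade removal has smaller-order probability; summing over $v\in V_2\cap V(G_t)$ — using that the average degree over this set stays $3+O(\epsilon_1^3)$ because its high-degree members are hit proportionally to their degree and vanish first, that $|V_2\cap D^t|$ is $O(\log n)$ for $t\ge\tau_1$ and for $t<\tau_1$ consists almost only of the $O(n^{0.9})$ $Y_2^0$-vertices that happen to neighbour $Y_2^0$, and that by part~(b) the run length is $\tau=n-O(n^{0.9})$ so $2m_t\ge(1-o(1))(n-t)$ for $t\le\tau'$ — I would derive
\[
\mathbb E\big(N_t-N_{t+1}\,\big|\,\cH_t\big)\ \le\ \frac{10\,N_t}{n-t}\,(1+o(1))\qquad(t\le\tau').
\]
Rescaling by $(1-t/n)^{-10}$ makes $N_t(1-t/n)^{-10}$ an (approximate) submartingale of value $|V_2|$ at $t=0$ with increments $O(\log^{11}n)$, so an Azuma-type concentration — whose $O(\sqrt n\log^{12}n)$ fluctuation is dwarfed by $|V_2|=\Theta(n^{0.95})$ — gives $N_t\ge 0.5\,(1-t/n)^{10}|V_2|$ for all $t\le\tau'$ w.h.p., the factor $\tfrac12$ absorbing the constant fraction (strictly below one-half) of $V_2$ consumed while the $Y_2$-vertices are being processed in $[0,\tau_1]$. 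The care needed here is in the drift bound, to ensure the constant does not exceed $10$; this uses the degree estimates of Lemma~\ref{lem:degDistributions} (high-degree members of $V_2$ disappear first) and the $\zeta_t$-bound to control the $D$-rule term.
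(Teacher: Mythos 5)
Your plan for part (b) is a genuine re-packaging of the paper's argument: you use the accounting identity $|M_\tau|=n-\tfrac12\Delta^\ast$ and count ``costly events'' per vertex, whereas the paper introduces the quantity $b(t)$ (edges deleted at line~12 from vertices in $Y_1^t\cup Y_2^t\cup Z_1^t$), shows $\mathbb E(b(t)\mid\cH_t)=O(\zeta_t/2m_t)$, and bounds $\sum_t b(t)$ via Azuma. Both routes hinge on the same quantity --- how often line-12 deletions hit dangerous vertices --- so the approaches are close in spirit, but your vertex-by-vertex version is a legitimate alternative. Your burn-in argument for $[0,\tau_1]$ (proportionality of hits on $Y_2^0$) also differs from the paper's, which just integrates $\zeta_t/2m_t=O(n^{-0.05})$ over $\tau_1=O(n^{0.95})$ steps; both give $O(n^{0.9})$.

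There is, however, a concrete gap in your treatment of the range $t\ge\tau_1$. You invoke ``$|D^t|=O(\log n)$ for $t\ge\tau_1$'' to get a per-step cost of $O(\log^4n/m_t)$, but Lemma~\ref{lem:probei} only delivers $\zeta_t\le 200\e_1^{-2}\log n$ on $[\tau_1,\tau_2+\log^2 n]$, where $\tau_2=\min\{t:|Y_3^t|\le\e_1 n\}$. Beyond $\tau_2$, $p_t=3|Y_3^t|/2m_t$ is no longer bounded away from $0$, the negative drift $\approx -1+(1-p_t^2)$ vanishes, and the $O(\log n)$ bound on $\zeta_t$ cannot be maintained by the same argument. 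The paper handles this with a separate lemma (Lemma~\ref{lem:zeta}), proving the weaker but uniform bound $\zeta_t\le n^{0.61}$ for $\tau_1\le t\le\tau_3=\min\{t:m_t\le n^{0.9}\}$ from the subtler drift inequality $\mathbb E(\zeta_{t+1}-\zeta_t\mid\cH_t)\le-\zeta_t/4m_t$. You need some analogue of this; fortunately, with $\zeta_t\le n^{0.61}$ and $m_t\ge n^{0.9}$ on $[\tau_1,\tau_3]$, your per-step bound becomes $O(n^{0.61}/m_t)$ and $\sum_t O(n^{0.61}/m_t)=O(n^{0.71})=o(n^{0.9})$, so the final conclusion survives --- but as written the claim ``$|D^t|=O(\log n)$ for all $t\ge\tau_1$'' is unjustified and indeed is inconsistent with your own earlier caveat that this bound only holds on $[\tau_1,\tau_2]$. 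You also silently skip the tail $[\tau_3,\tau]$, which is benign ($m_t\le n^{0.9}$ there) but should be said.

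For part (c) your approach is again close to the paper's but less tightly set up. The paper restricts attention to the sub-population $V_2'\subseteq V_2$ of vertices that at time $\tau_1$ lie in $Z_2^{\tau_1}$ and argues $|V_2'|\ge 0.6|V_2|$, then runs the drift bound only from $\tau_1$ on; this avoids deriving any drift estimate during the noisy burn-in $[0,\tau_1]$, where $\zeta_t$ is large, $Y_2^t$-vertices are still being cleared, and the $D$-rule term you mention ($\mathbb I(v\in D^t)/|D^t|$) is $\Theta(1/n^{0.95})$ rather than negligible. You instead track $N_t=|V_2\cap V(G_t)|$ from $t=0$ and assert that the factor $\tfrac12$ absorbs the burn-in loss; that requires an argument (exactly what the paper's $|V_2'|\ge0.6|V_2|$ step provides) which you acknowledge but do not actually give. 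The drift bound $10N_t/(n-t)$ is also only valid where the high-degree contribution is $O(\e_1^3)$ and $|V_2\cap D^t|$ is negligible, i.e.\ past $\tau_1$ --- so in effect you would be forced into the paper's two-phase structure anyway. Adopting the $V_2'$ device would make your proof airtight with no loss of strength, and is the cleanest way to repair both of these soft spots.
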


\subsection{Proof of part (b)}
Let $\tau_3=\min\{t:m_t\leq n^{0.9}\}$. We start by proving the following lemma.
\begin{lemma}\label{lem:zeta}
W.h.p. $\z_t\leq n^{0.61}$ for $\tau_1\leq t\leq \tau_3$.
\end{lemma}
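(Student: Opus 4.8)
The plan is to bound $\zeta_t$ by tracking the excursions of the process $(\zeta_t)$ above $0$ and combining a drift estimate with a union bound over the (polynomially many) starting points of such excursions. First I would recall from the proof of Lemma \ref{lem:probei} that, in the high-probability event $\cE_0$, whenever $\zeta_t>0$ and $t\le\tau_2+\log^2 n$ we have the negative drift bound $\mathbb{E}(\zeta_{t+1}-\zeta_t\mid\cH_t,\cE_t)\le -0.7\epsilon_1^2$ (inequality (1.23) in the excerpt, i.e.\ the display following \eqref{eq:weakboundz}), together with the one-step boundedness $|\zeta_{t+1}-\zeta_t|\le 4\Delta(G)\le 4\log n$ from \eqref{eq:bounds} and Lemma \ref{lem:maxdegree}. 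The difficulty here compared to Lemma \ref{lem:probei} is that the interval $[\tau_1,\tau_3]$ is much longer than $[\tau_1,\tau_2]$ (it runs essentially to the end of the process), so the drift estimate is only available while $|Y_3^t|$ is still of order $\epsilon_1 n$; I would therefore first argue that $p_t=3|Y_3^t|/2m_t$ stays bounded away from $1$ on all of $[\tau_1,\tau_3]$ — in fact $p_t\le 1$ always since $3|Y_3^t|\le 2m_t$, which is enough to keep the drift of $\zeta_t$ at most $2(1-p_t^2)-(2-p_t^2)\cdot\mathbb{I}(\zeta_t>0)+o(1)$, hence $\le -\Omega(\epsilon_1^2)$ is too strong in general but $\le 0$ suffices for the excursion argument once $p_t$ is close to $1$; near the end of the process $m_t$ is small and the trivial bound $\zeta_t\le 2m_t$ will take over. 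This case split (is $m_t$ large or small?) is the main structural point.

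Concretely I would proceed as follows. Let $\tau_3=\min\{t:m_t\le n^{0.9}\}$ and split $[\tau_1,\tau_3]$ at the time $\tau_2$. On $[\tau_1,\tau_2]$ the desired bound $\zeta_t\le 200\epsilon_1^{-2}\log n\le n^{0.61}$ is already contained in the conclusion of Lemma \ref{lem:probei} (the event $\cE_t$). For $t\in[\tau_2,\tau_3]$, note that if $\zeta_t\ge n^{0.61}$ then, since $|\zeta_{t+1}-\zeta_t|\le 4\log n$, there is a last time $t'<t$ with $\zeta_{t'}=0$ and $\zeta_{t''}>0$ for all $t''\in[t',t]$, with $t-t'\ge n^{0.61}/(4\log n)$ (it takes at least that many steps of increments $\le 4\log n$ to climb from $0$ to $n^{0.61}$); moreover at every such intermediate step the drift of $\zeta$ is nonpositive — whenever $\zeta_{t''}>0$, equation \eqref{eq:weakboundz} gives $\mathbb{E}(\zeta_{t''+1}-\zeta_{t''}\mid\cH_{t''},\cE_{t''})\le 2(1-p_{t''}^2)-(2-p_{t''}^2)=-p_{t''}^2\le 0$, plus an $o(1)$ error. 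Here I need $m_{t''}$ to be bounded below on the excursion; since $t''\le\tau_3$ gives $m_{t''}>n^{0.9}$, the $o(1)$ error terms (of order $(1+\zeta_{t''})/m_{t''}$, using $\zeta_{t''}\le n^{0.8}$, say) are genuinely $o(1)$. Then by the Azuma--Hoeffding inequality applied to the supermartingale $\zeta_{t'+i}+ (\text{small correction})$ with increments bounded by $4\log n$ over an interval of length $L\ge n^{0.61}/(4\log n)$, the probability that $\zeta$ reaches $n^{0.61}$ during this excursion is at most $\exp(-\Omega(n^{0.61}/(\log n))^2/(L\cdot(4\log n)^2))=\exp(-\Omega(n^{0.61}/\log^2 n))$.

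Finally I would take a union bound over the at most $n$ possible values of $t'$ (the excursion start time), which is absorbed by the $\exp(-n^{\Omega(1)})$ probability bound, to conclude that w.h.p.\ $\zeta_t< n^{0.61}$ throughout $[\tau_1,\tau_3]$. The main obstacle, as indicated, is making the drift argument run on the long interval $[\tau_2,\tau_3]$ where one no longer has $|Y_3^t|=\Omega(n)$: the resolution is that one does not need negative drift proportional to $\epsilon_1^2$ there, only \emph{nonpositive} drift (coming from the $-p_t^2$ term together with the $-\mathbb{I}(\zeta_t>0)$ term in \eqref{eq:1changez}, valid as long as $m_t$ is not too small so the error terms are controlled), and once $m_t\le n^{0.9}$ the trivial inequality $\zeta_t\le 2m_t\le 2n^{0.9}$ would be far too weak — so in fact I would stop the analysis exactly at $\tau_3$, and note that the claimed bound $n^{0.61}$ is chosen comfortably below $n^{0.9}$ precisely so that the excursion length $n^{0.61}/(4\log n)$ beats $(\log n)^2$ in the Azuma exponent while staying within the regime $m_t>n^{0.9}$ where the one-step estimates of Lemma \ref{lem:onestep} are valid.
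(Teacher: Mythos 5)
Your overall plan — drift estimate plus Azuma--Hoeffding over excursions — is the same as the paper's, but your drift estimate is too weak and the supermartingale property you invoke is not actually established. You bound the drift by $-p_{t}^2 + o(1)$, where the $o(1)$ is the error term $O\big((1+\zeta_t)/m_t\big)$ from \eqref{eq:1changez} (plus some approximation slack). That error term has an unspecified sign. Once $p_t$ is small (which happens well before $\tau_3$, i.e.\ for $t>\tau_2$), the $-p_t^2$ part may be tiny, while the error can be as large as $\Theta(\zeta_t/m_t)$; with $\zeta_t\sim n^{0.6}$ and $m_t$ as small as $n^{0.9}$ this is $\Theta(n^{-0.3})$, which over an excursion of up to $n$ steps could accumulate to $n^{0.7}\gg n^{0.61}$. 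So nonpositivity of the drift, which your Azuma step needs, is not proved; the ``small correction'' you allude to when calling $\zeta_{t'+i}$ plus a correction a supermartingale is exactly the gap.

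The paper avoids this by not absorbing $\Pr(u_t\in D^t)\approx \zeta_t/(2m_t)$ into the error. Re-deriving the one-step change as in Lemma \ref{lem:onestep} but keeping this contribution explicit gives, on the range $n^{0.6}\leq \zeta_t\leq n^{0.61}$ and $t<\tau_3$,
\[
\mathbb{E}(\zeta_{t+1}-\zeta_t\mid\cH_t)\ \leq\ -1-\frac{\zeta_t}{2m_t}+(1-p_t)(1+p_t)+O\!\left(\frac{\log^2 n}{m_t}\right)\ \leq\ -\frac{\zeta_t}{4m_t},
\]
since the remaining error (loops/multi-edges) is only $O(\log^2 n/m_t)$ and is dominated by $\zeta_t/(4m_t)$ once $\zeta_t\geq n^{0.6}$. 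This gives a genuinely nonpositive (in fact strictly negative) drift on $\{\,n^{0.6}\leq\zeta_t\leq n^{0.61}\,\}$, and the paper's Azuma argument is run on excursions \emph{above the threshold $n^{0.6}$} rather than from $0$ (so the drift bound applies at every step of the excursion). With this repair — tracking the negative $-\zeta_t/(2m_t)$ contribution explicitly instead of hiding it in an $o(1)$, and starting the excursion at $n^{0.6}$ — your argument matches the paper's and is correct.
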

\begin{proof}
Similarly to the derivation of \eqref{eq:1changez} we have that if $n^{0.6}\leq \z_t\leq n^{0.61}$ and $\tau_1\leq t <\tau_3$ then,
\begin{align*}
    \mathbb{E}(\z_{i+1}+\z_i|\cH_i) &\leq   -1-\frac{\z_t}{2m_t}+\sum_{i\geq 2}\frac{i |Z_i^t|}{ 2m_t}\bigg(  \frac{2|Z_2^t|}{2m_t}+2\frac{3|Y_3^t|}{2m_t} \bigg) + O(m_t^{-1}\log^2 n)
    \\&\leq   -1-\frac{\z_t}{2m_t}+(1-p_t)(1+p_t) + O(m_t^{-1}\log^2 n) 
    \leq -\frac{\z_t}{4m_t} = -O(n^{-0.6}).
\end{align*}
Equation \eqref{eq:bounds} states that $|\z_{t+1}-\z_t|\leq 4\Delta(G_0) \leq 4\log n$ w.h.p. Thus the Azuma-Hoeffding inequality implies that $\z_t\leq n^{0.61}$ for $t\in [\tau_1,\tau_3]$ w.h.p.
\end{proof}

For $t\geq 0$ we let $b(t)$ be the number of edges deleted from vertices incident to $Y_1^t\cup Y_2^t\cup Z_1^t$ at the $t^{th}$ execution of the while loop at line 12 of the description of \grs
Note that for $v\in Y^0$ if no edge incident to $v$ is deleted while $v$ lies in $Y_1^t\cup Y_2^t\cup Z_1^t$ for $t\geq 0$ and $v$ is not incident to a multiple edge or loop in $G_0$ then $v$ is matched twice by \grs Thus a lower bound on $M_\tau$ is given by $|Y^0\cup Y_2|-2\sum_{t \geq 0} b(t)-2mult$ where $mult$ is the number of loops and multiple edges in $G$. At step $t$ we have that $b(t)=k$ only if 1, or 2 vertices in $Z$ are matched and those vertices have $k$ neighbors in $D^t$. 
Hence,
\begin{align*}
    \mathbb{E}(b(t)|\cH_t) = O\bfrac{\z_t}{ 2m_t}.
\end{align*}

$\z_t\leq \max\{200\e_1^2 \z_0,\log^2 n\}=O(n^{0.95})$ for $t\leq \tau_1$ if the with high probability event $\cE_0$ occurs. As $b(t)\leq 2\Delta(G_0)\leq 2\log n$ w.h.p. the Azuma-Hoeffding inequality implies that w.h.p. $\sum_{i=1}^{\tau_1} b(t)=O(n^{0.9})$. Similarly as $\z_t\leq n^{0.61}$ for $t\in [\tau_1,\tau_3)$ the Azuma-Hoeffding inequality implies that w.h.p. $\sum_{i=\tau_1}^{\tau_3} b(t)\leq n^{0.9}$. Hence $\sum_{i=0}^{\tau_3}b(t)=O(n^{0.9})$. Finally, Lemma \ref{lem:loops} implies that $mult\leq \log n$ w.h.p. and therefore w.h.p.
$$ M_\tau \geq  |Y\cup Y_2|-O(n^{0.9})\geq  n-|Y_0\cup Y_1|-O(n^{0.9})= n-O(n^{0.9}).$$ 
Now to show that $M_\tau$ spans $O(n^{0.9})$ components it suffices to show that it spans at most $n^{0.9}$ cycles. For $i\geq 0$ there exists a cycle that is spanned by $M_{i+1}$ and not by $M_i$ if $v_i$ is the endpoint of some path $P$ induced by $M_i$ and $u_i$ is the other endpoint of $P$, note that only one such cycle may exist. The probability of this occurring is bounded by $\Delta(G)/m_i\leq  (2\log n)/n^{0.9}$. Therefore the probability that $M_\tau$ spans more than $n^{0.9}$ cycles is bounded above by 
$$o(1) +\Pr\bigg(Bin(n,\frac{2\log n}{n^{0.9}}\bigg)\geq n^{0.9}\bigg)=o(1).$$

\subsection{Proof of part (c)}
Let $V_2'$ be the set of vertices in $V_2$ that at time $\tau_1$ belong to $Z$ and have degree $2$. Standard (by this point) arguments imply that $|V_2'|\geq 0.6|V_2|$. For $t\geq \tau_1$ let $n_2^t=V_2'\cap V(G_t)$. Also let $\cF_t$ be the event that $\z_t\leq n^{0.91}$ and $m_t\geq 2(n-t)-n^{0.91}$. In the event $M_\tau\geq n-O(n^{0.9})$, as at efvery step a single edge edge is added to the matching, we have that
$m_t\geq \tau-t\geq n-t-n^{0.91}$. Thus  Lemma \ref{lem:zeta} and part (b) of Lemma \ref{lem:performance} imply that $\cap_{t=0}^{\tau'}\cF_t$ occurs w.h.p. 

Similarly to the derivation of \eqref{eq:1expchageY} one has that for $t\geq \tau'$ if $n_2^t\geq n^{0.8}$ (used at the second line of the calculations that follow) then,
\begin{align*}
\mathbb{E}(n_2^{t+1} -n_2^t |\cH_t,\cF_t) &\geq - (2-\mathbb{I}(\z_t>0))\bigg( \frac{2n_2^t}{ 2(n-t-n^{0.91})} +  \sum_{i\geq 2}\frac{i |Z_i^t|}{ 2m_t}   \frac{(i-1)2 n_t}{ 2(n-t-n^{0.91})}\bigg)+O(n^{-0.3})
\\& \geq  -\frac{2.1n_2^t}{n-t}\bigg(1+ \sum_{i\geq 2}\frac{i |Z_i^t|}{ 2m_t}\cdot (i-1) \bigg)    \geq  -\frac{2.1n_2^t}{n-t}\bigg(1+ \sum_{i\geq 2}\frac{i(i-1) \la^i }{ i!(e^\la-1-\la)}  \bigg)
\\& \geq  -\frac{2.1n_2^t}{n-t}\bigg(1+ \frac{ \la^2 }{(e^\la-1-\la)}  \bigg)+  \geq -\frac{2.1n_2^t}{n-t}\bigg(1+\frac{ \la^2 }{0.5\la^2}  \bigg) \geq  
-\frac{7n_2^t}{n-t}.
\end{align*}
At the third inequality we used Lemma \ref{lem:degDistributions}. Thereafter w.h.p. for $t>\tau_1$ we have that $|n_2^{t+1}-n_2^t|\leq 2\Delta(G)\leq 2\log n$. Theorem \ref{thm:diff} implies that w.h.p.
 $$n_2^t\geq \bigg(1-\frac{t}{n-\tau_1}\bigg)^8n_2^{\tau_1}+O(n^{0.91})\geq  0.5\bigg(1-\frac{t}{n}\bigg)^8|V_2|$$
 for $\tau_1\leq i\leq\tau'$.

\section{The differential equation method }\label{sec:app:diffmethod}
In this section, we provide a self-contained statement of the differential equation method in a form that is convenient for our purposes. It combines  Theorem 2 of \cite{warnke2019wormald}, Remark 3 of \cite{warnke2019wormald} and Lemma 9 of \cite{warnke2019wormald}. Note that both the ``Trend Hypothesis " and the ``Boundedness Hypothesis" at Theorem \ref{thm:diff} are stated in a less general form as one need to verify them only for $0\leq i \leq T$ where $T$ is some stopping time. This is exactly the setting of Lemma \ref{lem:trend}. 

Suppose we are given integers $a,n$ a bounded domain $\cD \subseteq \mathbb{R}^{a+1}$, functions $(F_k)_{1\leq k\leq a}$ with $F_k: \cD \mapsto \mathbb{R}$ that are $L$-Lipschitz continuous on $\cD$ for some $L>0$, and $\sigma$-fields $\cF_0\subseteq \cF_1 \subseteq ... \subseteq \cF_n$. 
\begin{theorem}[Differential equation method,  \cite{warnke2019wormald}]\label{thm:diff}
Let $(Y_k(i))_{1\leq k\leq a}$ be $\cF_t$-measurable random variables for $0\leq i \leq n$  and $T\leq n$ be a stopping time with respect to the filtration $(\cF_i)_{0\leq i\leq n}$. Assume that we are given a sequence of events $\{\cE_i\}_{0\leq i\leq n}$ such that $\cap_{i=0}^{n} \cE_i$ holds w.h.p.
Furthermore assume that there exist $\delta,\beta,\lambda$ such that for all $i>0$ and $1\leq k\leq a$ the
following conditions hold whenever $(i/n, Y_1(i)/n, ..., Y_a(i)/n)\in \cD$ and $i<T$
\begin{itemize}
\item[(i)](Trend Hypothesis) $|\mathbb{E}[Y_k(i+1)-Y_k(i)|\cF_i, \cE_i]- F_k(i/n, Y_1(i)/n,...,Y_a(i)/n)|\leq \delta$,
\item[(ii)](Boundedness Hypothesis) $|Y_k(i+1)-  Y_k(i)|\leq \beta $ in the event $\cE_i$ and
\item[(iii)](Initial condition)  $\max_{1\leq k\leq a}|Y_k(0)/n-y_k^0|\leq \lambda$ for some 
$(0, y_1^0,y_2^0...,y_k^0)\in \cD$ w.h.p.
\end{itemize}
Let $(y_{k}(t))_{1\leq k\leq a}$ be the unique  solution to the system of differential equations 
$$y'_k(t)=F_k(t,y_1(t),...,y_k(t))\text{ with } y_k(0)=y_k^0 \text{ for }1\leq k\leq a,$$
and $0\leq \sigma=\sigma(y_1^0,y_2^0,...,y_k^0)\in [0,1]$  be such that $(t,y_1(t),y_2(t),...,y_k(t))$ has $\ell^\infty$ distance at least $3e^{L}\lambda$ from the boundary of $\cD$ for all $t\in (0,\sigma)$. Finally let $R=\max\{\max_{1\leq k\leq a} \max_{x\in \cD}|F_k(x)|,1\}$. Then, whenever $\la\geq \delta L^{-1} +R/n$ we have that w.h.p. the following bound holds,
\begin{equation}\label{eq:diffeqmethod}
     \max_{0\leq t\leq \sigma n}\max_{1\leq k\leq a}\bigg|Y_k(i)-y_k\bfrac{i}{n}n\bigg|\leq 3e^{L}\lambda n.
\end{equation}
\end{theorem}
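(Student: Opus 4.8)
Theorem~\ref{thm:diff} is a bookkeeping repackaging of the general differential equation method of Warnke, so the plan is to deduce it from the three ingredients named in the preamble rather than to reprove supermartingale concentration from scratch. First I would invoke Theorem~2 of \cite{warnke2019wormald}, which is a multi-variable differential equation theorem of the shape above but \emph{without} the auxiliary events $\{\cE_i\}_{0\le i\le n}$ and \emph{without} the stopping time $T$: under the trend and boundedness hypotheses holding (unconditionally) on the region where $(i/n,Y_1(i)/n,\dots,Y_a(i)/n)\in\cD$, it gives $\max_{0\le i\le\sigma n}\max_{k}|Y_k(i)-y_k(i/n)n|\le 3e^{L}\lambda n$ w.h.p., i.e.\ tracking up to the escape time from $\cD$. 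The core of that argument, which I would only sketch, is to form for each coordinate $k$ and each sign a process $Z_k^{\pm}(i)=\pm\big(Y_k(i)-n\,y_k(i/n)\big)-g(i)$, where $g$ is an increasing correction term engineered to dominate the accumulated per-step drift errors ($\delta$ each) together with the Lipschitz slippage between consecutive values of $n\,y_k(i/n)$; one checks via (i), (ii) and the $L$-Lipschitz continuity of $F_k$ that $Z_k^{\pm}$ is a supermartingale with increments bounded by roughly $\beta$, and then a Freedman/Azuma-type tail bound, a union bound over $k$ and over the $n$ steps, and Gr\"onwall's inequality (which is where the factor $e^{L}$ enters, a perturbation $\lambda$ in the initial data propagating along the flow of an $L$-Lipschitz field as $\lambda e^{Lt}$) yield \eqref{eq:diffeqmethod}. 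The requirement that the trajectory stay $\ell^\infty$-distance $3e^{L}\lambda$ from $\partial\cD$ is exactly what keeps the random point $(i/n,Y_1(i)/n,\dots)$ inside $\cD$, where the hypotheses are assumed, for the whole range $0\le i\le\sigma n$.

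Second, I would bring in Remark~3 of \cite{warnke2019wormald}, which records that the trend and boundedness hypotheses need only hold on a family of ``good'' events $\cE_i$ as long as $\Pr(\bigcap_i\cE_i)=1-o(1)$: one runs the previous argument for the modified process obtained by freezing each $Y_k$ the first time some $\cE_i$ fails, observes that on $\bigcap_i\cE_i$ the modified and original processes coincide, and absorbs the $o(1)$ failure probability into the final w.h.p.\ statement. This legitimizes the conditioning on $\cE_i$ in hypothesis (i), which is exactly the form in which Lemma~\ref{lem:trend} (via Lemma~\ref{lem:probei}) supplies its estimates. Third, Lemma~9 of \cite{warnke2019wormald} is the localization-to-a-stopping-time device: since only the conclusion on $\{0,1,\dots,T\}$ is ever used downstream, one may work with the process stopped at $T$ and so only needs to verify the hypotheses for $i<T$. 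In the intended application $T$ is the index at which $\tau_1+i\log^2 n$ first reaches $\tau_2$, which keeps $|Y_3^t|\ge\epsilon_1 n$ and hence all denominators $2m_t$ bounded below, matching the statement of Lemma~\ref{lem:trend}.

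The remaining work is purely notational: unwind the time rescaling $i\mapsto\tau_1+i\log^2 n$ and the variable rescaling to $(\bY,2\bm,\bwmax,\bW)$, take $a=4$ with the abstract ``$n$'' set to $n/\log^2 n$, read off $\delta=O(\epsilon_1^2)$ and $\beta=4\log^6 n$ from Lemmas~\ref{lem:trend} and~\ref{boundedness}, take $\lambda=\epsilon_1$ from Lemma~\ref{lem:initialconditionstrend}, identify $F$ with the right-hand sides of \eqref{eq:Y}--\eqref{eq:Wt}, and check the side condition $\lambda\ge\delta L^{-1}+R/n$, which holds once $\epsilon_1$ is small relative to the constants $L,R$ attached to $F$ on the bounded, bounded-away-from-degenerate domain $\cD$. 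I do not expect a genuine obstacle; the only points needing care are that the frozen/stopped process used to import Remark~3 and Lemma~9 stays adapted to $(\cF_i)$ and retains $\beta$-bounded increments on $\cE_i$, and that the several $o(1)$ terms (bad events, stopping time, union bound) are genuinely summable, both routine given the quantitative hypotheses. The ``hard part'' is entirely upstream: it was in establishing (i)--(iii) with a small enough $\delta$, i.e.\ in Lemmas~\ref{lem:onestep}--\ref{lem:initialconditionstrend}, not in the invocation of Theorem~\ref{thm:diff} itself.
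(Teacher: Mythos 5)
The paper does not actually prove Theorem~\ref{thm:diff}; it states it as a citation, saying explicitly that it ``combines Theorem 2, Remark 3 and Lemma 9 of \cite{warnke2019wormald}.'' Your proposal identifies exactly the same three ingredients, correctly assigns their roles (base DEM, conditioning on good events, localization to a stopping time), and your high-level supermartingale/Gr\"onwall sketch of how they fit together is consistent with what a reader would reconstruct from Warnke's paper — so this matches the paper's approach.
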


\section{Mathematica output}\label{sec:app:code}
\includepdf[pages=-]{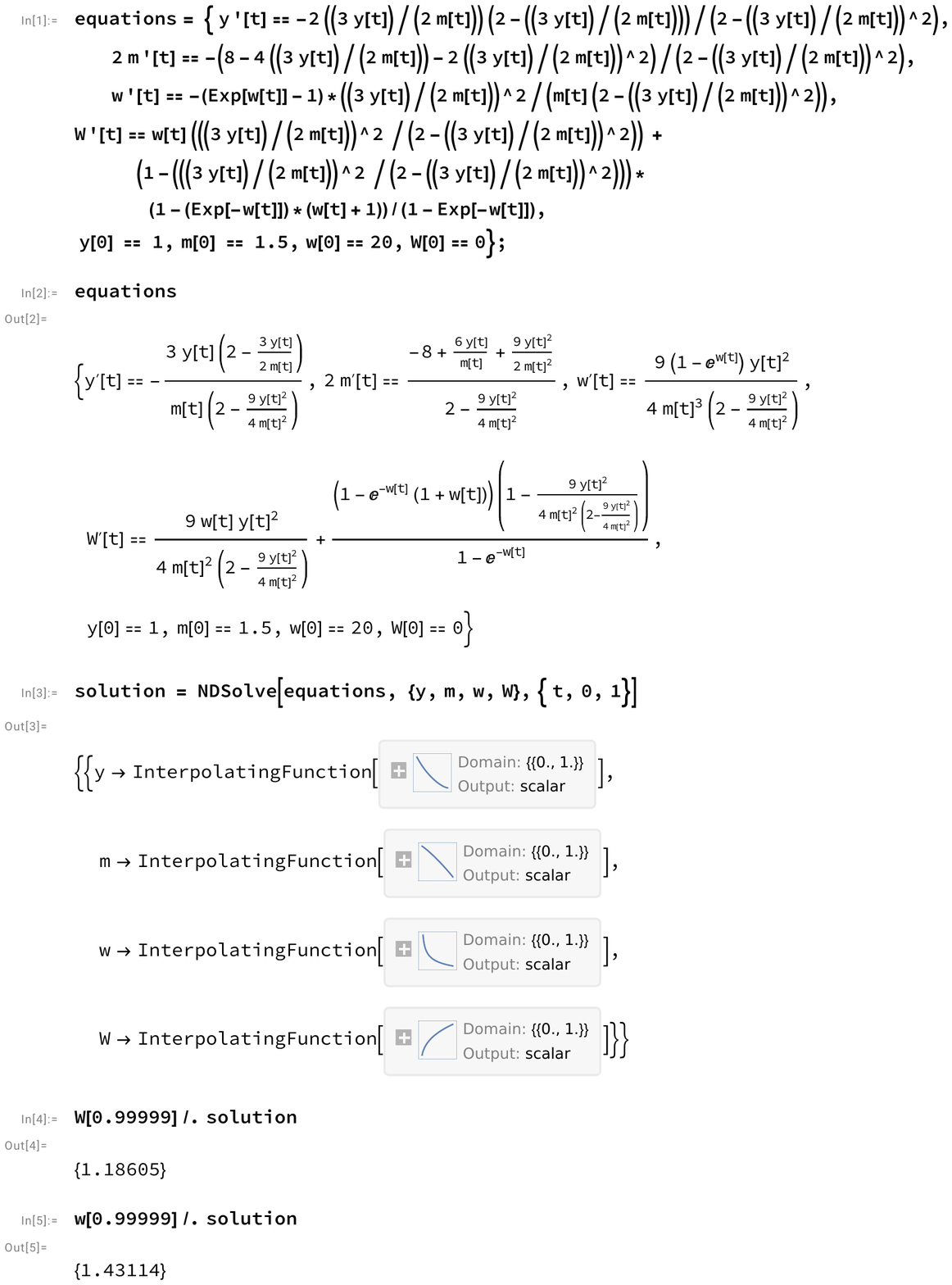}

\end{appendix}

\end{document}